\providecommand*{\dashv}{%
  \mathrel{%
    \mathpalette\@dashv\vdash
  }%
}
\newcommand*{\@dashv}[2]{%
  \reflectbox{$\m@th#1#2$}%
}
\theoremstyle{plain}
\newtheorem{theorem}{Theorem}[section]
\newtheorem{corollary}[theorem]{Corollary}
\newtheorem{lemma}[theorem]{Lemma}
\newtheorem{proposition}[theorem]{Proposition}
\theoremstyle{definition}
\newtheorem{fact}[theorem]{Fact}
\newtheorem{definition}[theorem]{Definition}
\newtheorem{remark}[theorem]{Remark}
\newtheorem{assum}[theorem]{Assumption}
\newtheorem{ques}[theorem]{Question}
\newcommand{\han}{\beth_{(2^{\ls})^+}}
\newcommand{\lk}{\leq_{\bf K}}
\newcommand{\mn}{\mathfrak{C}}
\newcommand{\oop}{\operatorname}
\newcommand{\gtp}{\oop{gtp}}
\newcommand{\gs}{\oop{gS}}
\newcommand{\cf}{\oop{cf}}
\newcommand{\ran}{\oop{ran}}
\newcommand{\ls}{\oop{LS}({\bf K})}
\newcommand{\lss}{\oop{LS}({\bf K_1})}
\newcommand{\llk}{\oop{L}({\bf K})}
\newcommand{\defeq}{\vcentcolon=}
\newcommand{\eqdef}{=\vcentcolon}
\def\fork{\mathrel{\raise0.2ex\hbox{\ooalign{\hidewidth$\vert$\hidewidth\cr\raise-0.9ex\hbox{$\smile$}}}}}
\newcommand{\nr}[1]{\lVert #1 \rVert}
\newcommand{\card}[1]{\lvert #1 \rvert}
\newcommand{\al}{{\aleph_0}}
\DeclareMathOperator{\tp}{tp}
\newcommand{\mylabel}[2]
    {\protected@write\@auxout{}{\string\newlabel{#1}{{#2}{\thepage}%
      {\@currentlabelname}{\@currentHref}{}}}}}%
\newcommand{\mylabel}[2]
    {\protected@write\@auxout{}{\string\newlabel{#1}{{#2}{\thepage}}}}}
\begin{document}

\pagenumbering{roman}
\setcounter{page}{0}
\newpage
\pagenumbering{arabic}
\setcounter{page}{1}
\parindent=.35in
\begin{center}
        \begin{center}%
         {\Large\bfseries\scshape Hanf number of the\\First stability cardinal in AECs}\\\vspace{1em}{\scshape Samson Leung}\\      
        \end{center}%
\end{center}
{\let\thefootnote\relax\footnote{Date: \today\\
AMS 2020 Subject Classification: Primary 03C48. Secondary: 03C45, 03C55.
Key words and phrases. Abstract Elementary Classes; Stability; First stability cardinal; Tameness; Hanf number.
}} 
\begin{abstract}
We show that $\beth_{(2^{\ls})^+}$ is the lower bound to the Hanf numbers for the length of the order property and for stability in stable abstract elementary classes (AECs). Our examples satisfy the joint embedding property, no maximal model, $(<\al)$-tameness but not necessarily the amalgamation property. We also define variations on the order and syntactic order properties by allowing the index set to be linearly ordered rather than well-ordered. Combining with Shelah's stability theorem, we deduce that our examples can have the order property up to any $\mu<\beth_{(2^{\ls})^+}$. Boney conjectured that the need for joint embedding property for two type-counting lemmas is necessary. We solved the conjecture by showing it is independent of ZFC. Using Galois Morleyization, we give syntactic proofs to known stability results assuming a monster model.
\end{abstract}
\vspace{1em}
\begin{center}
{\bfseries TABLE OF CONTENTS}
\end{center}
\tableofcontents

\section{Introduction}
Semantic order properties (\ref{fsdef}) in abstract elementary classes (AECs) are defined in terms of (semantic) Galois types instead of formulas. They are analogs to syntactic order properties in first-order and infinitary logics. 
In \cite{sh16}, Shelah showed that in $L_{\lambda^+,\omega}$ the (syntactic) order property of length $\beth_{(2^{\lambda})^+}$ implies the order property of arbitrary length. In \cite{sh222}, Grossberg and Shelah introduced the Hanf number of the order property of $L_{\lambda^+,\omega}$ and later \cite[Theorem 2.8]{sh259} gave a lower bound as $\beth_{\lambda^+}$. These bound the Hanf number of order property between $\beth_{\lambda^+}$ and $\beth_{(2^{\lambda})^+}$. However, the example for the lower bound does not readily generalize to (semantic) order properties of AECs. Shelah \cite[Claim 4.6]{sh394} hinted that the upper bound of the order property in AECs is $\beth_{(2^{\ls})^+}$ but it was not known whether it is tight. We present examples (\ref{fsprom}) that $\beth_{(2^{\ls})^+}$ is exact. Our examples satisfy the joint embedding property, no maximal model and $(<\al)$-tameness but the amalgamation property fails. It is open whether the bound can be lowered when one assumes the amalgamation property.

Vasey \cite{s5} extended Shelah \cite{sh3}, Grossberg and Lessmann's \cite{GLspec} results to AECs and showed that assuming the amalgamation property and tameness, the first stability cardinal is bounded above by $\beth_{(2^{\ls})^+}$. It is open whether this bound can be lowered under the amalgamation property. Our examples, which do not satisfy the amalgamation property, show that the lower bound in general is at least $\beth_{(2^{\ls})^+}$. From instability, we can apply Vasey's techniques (which are based on \cite[V.A.]{sh300a}) to derive the order property. This provides an alternative way other than finding the witnesses directly. It is open whether the amalgamation property can lower the bound for the first stability cardinal.

Vasey's result above relies on one direction of \cite[Theorem 3.1]{bon3.1}, which does not use the joint embedding property. The other direction involves lemmas that assume the joint embedding property, which Boney suspected to be necessary. As a side product of our construction, we show in \ref{fscor3} that the need for the joint embedding property is independent of ZFC; and we find an example and a counterexample under different set theoretic assumptions. 

In Section 2, we state our notations and definitions. We also give a shorter proof to Boney's result to motivate \ref{fscor3}. In Section 3, we review results concerning stability and the order property. We give more details for the proof of \cite[Claim 4.6]{sh394}. In Section 4, we construct our main examples in \ref{fsprop} which set a lower bound to stability and a variation of the order property for stable AECs. The variation of the order property is slightly more general by allowing the index set to be linear ordered rather than well-ordered. We will also show \ref{fscor3} as a side product of our construction. In Section 5, we apply the same variation to the syntactic order property which can be combined with Galois Morleyization. We give analogs to Vasey's results with our variation on the order property. In Section 6, we write down the details of Vasey's observation \cite[Fact 4.10]{s5} that Shelah's results in \cite[V.A.]{sh300a} can be applied to AECs under Galois Morleyization. It allows us to deduce (\ref{fsprom}) the order property up to $\beth_{(2^{\ls})^+}$ in our examples in \ref{fsprop}, without finding explicit witnesses. We also apply such technique to bound the first stability cardinal under extra hypotheses. In Section 7, we use Galois Morleyization to recover common stability results where types can be over sets under $\ls$. Vasey in \cite[Section 5]{s5} has done similarly for coheir while we will work on splitting instead. In particular we prove \ref{gvstab} syntactically which is needed for Vasey's upper bound to the first stability cardinal, under the amalgamation property and tameness.

This paper was written while the author was working on a Ph.D. under the direction of Rami Grossberg at Carnegie Mellon University and we would like to thank Prof. Grossberg for his guidance and assistance in my research in general and in this work in particular. We also thank John Baldwin, Hanif Cheung, Marcos Mazari-Armida and Wentao Yang for useful comments.

\section{Preliminaries}

We assume some familiarity with AECs, for example \cite[Chapter 4]{bal}. We will use $\kappa,\lambda,\mu,\chi$ to denote cardinals, $\alpha,\beta,\gamma$ to denote ordinals, $n$ for natural numbers. We define $\kappa^-$ to be the predecessor cardinal (if it exists) or $\kappa$ itself. When we write $\alpha-n$, we assume $\alpha=\beta+n$ for some ordinal $\beta$.

Let ${\bf K}=\langle K,\lk\rangle$ be an AEC. If the context is clear, we write $\leq$ in place of $\lk$. We abbreviate $AP$ the \emph{amalgamation property}, $JEP$ the \emph{joint embedding property} and $NMM$ \emph{no maximal model}. For $M\in K$, write $|M|$ the universe of $M$ and $\nr{M}$ the cardinality of $M$. For Galois types (orbital types) of length $(<\alpha)$, we denote them as $\gs^{<\alpha}(\cdot)$ where the argument can be a set $A$ in some model $M\in K$. In general $\gs^{<\alpha}(A)\defeq\bigcup\{\gs^{<\alpha}(A;M):M\in K, |M|\supseteq A\}$ (under $AP$, the choice of $M$ does not matter). ${\bf K}$ is \emph{$(<\alpha)$-stable in $\lambda$} if for any set $A$ in some model $M\in K$, $\card{A}\leq\lambda$, then $\card{\gs^{<\alpha}(A;M)}\leq\lambda$. We omit ``$(<\alpha)$'' if $\alpha=2$, while we omit ``in $\lambda$'' if there exists such a $\lambda\geq\ls$. Similarly ${\bf K}$ is \emph{$\alpha$-stable in $\lambda$} if for any set $A$ in some model $M\in K$, $\card{\gs^\alpha(A)}\leq\lambda$. Tameness will be defined in \ref{tame2}. We allow stability and tameness under $\ls$, especially in Section \ref{synspl}.

Given a $\forall\exists$ theory $T$ and a set of $L(T)$-types $\Gamma$, $EC(T,\Gamma)$ is the class of models of $T$ such that $\Gamma$ is not realized by any elements. If we order $EC(T,\Gamma)$ by $L$-substructures, it forms an AEC with $\ls=|L(T)|$. $\delta(\lambda,\kappa)$ is the least ordinal $\delta$ such that: for any $T,\Gamma$ with $|L(T)|\leq\lambda$, $|\Gamma|\leq\kappa$, $\{P,<\}\subseteq L(T)$ where $P$ is a unary predicate, $<$ is a linear order on $P$, if there is a model $M\in EC(T,\Gamma)$ whose $(P^M,<^M)$ has order type $\geq\delta$, then there is a model $N\in EC(T,\Gamma)$ whose $(P,<)$ is not well-ordered.

Recall the classical theorem:  notice in the proof that $X$ can be a linear order while $Y'$ can be its suborder.
\begin{theorem}[Erd\H{o}s-Rado Theorem]\mylabel{erthm}{Theorem \thetheorem}
Let $\lambda$ be an infinite cardinal. For $n<\omega$,
$$\beth_n(\lambda)^+\rightarrow(\lambda^+)^{n+1}_\lambda$$
In other words, for any $|X|\geq\beth_n(\lambda)^+$, any $f:[X]^{n+1}\rightarrow\lambda$, there is $X'\subseteq X$ such that $|X'|\geq\lambda^+$ and $f\restriction[X']^{n+1}$ is constant.
\end{theorem}

\begin{proof}
We adapt the proof in \cite[Theorem 5.1.4]{marker} because it does not require the set $X$ to be well-ordered. We prove by induction:
When $n=0$, the statement is $\lambda^+\rightarrow(\lambda^+)^1_\lambda$. Let $X$ be of size $\geq\lambda^+$. We need to color its elements with $\lambda$-many colors. By pigeonhole principle, it is possible to find $X'\subseteq X$ of size $\geq\lambda^+$ such that $f\restriction X'$ is constant.

Assume the statement is true for $n-1$. We need to show $\beth_n(\lambda)^+\rightarrow(\lambda^+)^{n+1}_\lambda$. Let $X$ be of size $\beth_n(\lambda)^+$, $f:[X]^{n+1}\rightarrow\lambda$. For $x\in X$, define $f_x:[X-\{x\}]^n\rightarrow\lambda$ by $f_x(Y)\defeq f(Y\cup\{x\})$. We build $\langle X_\alpha:\alpha<\beth_{n-1}(\lambda)^+\rangle$ increasing and continuous subsets of $X$ such that for $\alpha<\beth_n(\lambda)^+$, $|X_\alpha|=\beth_n(\lambda)$. For the base step, take any $X_0\subseteq X$ of size $\beth_n(\lambda)$. Suppose $X_\alpha$ is constructed, we build $X_{\alpha+1}$ satisfying:

\begin{enumerate}
\item $X_\alpha\subseteq X_{\alpha+1}\subseteq X$
\item $|X_{\alpha+1}|=\beth_n(\lambda)$
\item For any $Y\subseteq X_\alpha$ of size $\beth_{n-1}(\lambda)$, any $x\in X-Y$, there is $x'\in X_{\alpha+1}-Y$ such that $f_x\restriction[Y]^n=f_{x'}\restriction[Y]^n$.
\end{enumerate}

The above is possible by a counting argument: the number of possible $Y$ is $$|X_\alpha|^{\beth_{n-1}(\lambda)}=\beth_{n}(\lambda)^{\beth_{n-1}(\lambda)}=(2^{\beth_{n-1}(\lambda)})^{\beth_{n-1}(\lambda)}=2^{\beth_{n-1}(\lambda)}=\beth_{n}(\lambda).$$ Given $Y$, the number of possible $h:[Y]^n\rightarrow\lambda$ is bounded by 
$$\lambda^{\beth_{n-1}(\lambda)}=2^{\beth_{n-1}(\lambda)}=\beth_{n}(\lambda)$$ Therefore, it suffices to add $\beth_{n}(\lambda)\cdot\beth_{n}(\lambda)=\beth_{n}(\lambda)$-many witnesses from $X$ to $X_\alpha$. Define $X'=\bigcup\{X_\alpha:\alpha<\beth_{n-1}(\lambda)^+\}$. Notice that $|X'|=\beth_{n}(\lambda)<|X|$. For any $Y\subseteq X'$ of size $\beth_{n-1}(\lambda)$, by a cofinality argument $Y\subseteq X_\alpha$ for some $\alpha<\beth_{n-1}(\lambda)^+$. So for any $x\in X-Y$, there is $x'\in X_{\alpha+1}-Y\subseteq X'-Y$ such that $f_x\restriction[Y]^n=f_{x'}\restriction[Y]^n$.

Pick any $x\in X-X'$ and build $Y=\{y_\alpha:\alpha<\beth_{n-1}(\lambda)^+\}\subseteq X'$ such that $f_{y_\alpha}\restriction[\{y_\beta:\beta<\alpha\}]^n=f_{x}\restriction[\{y_\beta:\beta<\alpha\}]^n$ ($y_0\in X'$ can be any element). By inductive hypothesis on $Y$ and $f_x$, we can find $Y'\subseteq Y$ of size $\geq\lambda^+$ such that $f_x\restriction[Y']^n$ is constant. We check that $Y'$ is as desired: let $A\in[Y']^{n+1}$ and write $A=\{y_{\alpha_1},\dots,y_{\alpha_{n+1}}\}$ where $\alpha_1<\cdots<\alpha_{n+1}<\beth_{n-1}(\lambda)^+$. $$f(A)=f_{y_{\alpha_{n+1}}}(A-\{y_{\alpha_{n+1}}\})=f_x(A-\{y_{\alpha_{n+1}}\})$$ which is constant because $f_x$ is constant on $[Y']^n\ni A-\{y_{\alpha_{n+1}}\}$. 
\end{proof}
The following \ref{bonthm} and \ref{gvstab} are only used in the proof of \ref{fscor2}(1). We will streamline Boney's proof of \ref{bonlem4} by omitting the ambient models (otherwise it would involve a lot of bookkeeping and direct limits). We will clarify the relationship between \ref{bonlem4} and \ref{bonthm} and show that $JEP$ in \ref{bonthm} is not needed. If we work in a monster model $\mn$, we can also allow stability over sets (of size $<\ls$), but we keep the original formulation to state \ref{bonresrmk} more clearly.

\begin{theorem}\mylabel{bonthm}{Theorem \thetheorem}
Let ${\bf K}$ be an AEC and $\lambda\geq\ls$. Suppose ${\bf K}$ has $\lambda$-$AP$ and is stable in $\lambda$. For any ordinal $\alpha\geq1$ with $\lambda^{|\alpha|}=\lambda$, ${\bf K}$ is $\alpha$-stable in $\lambda$.
\end{theorem}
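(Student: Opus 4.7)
My plan is to induct on $\alpha\geq 1$, with base case $\alpha=1$ given by the stability assumption. Fix $M\in K$ with $\nr{M}\leq\lambda$; I want to show $\card{\gs^\alpha(M)}\leq\lambda$. For any $\beta<\alpha$ one has $\lambda^{\card{\beta}}=\lambda$: trivially when $\beta$ is finite, and because $\card{\beta}\leq\card{\alpha}$ when $\beta$ is infinite. Hence the inductive hypothesis applies at every earlier stage.

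For the successor step $\alpha=\beta+1$ with $\beta\geq 1$, induction gives $\card{\gs^\beta(M)}\leq\lambda$. For each $q\in\gs^\beta(M)$ I would fix a realization $\bar b^q$ of $q$ in some $N^q$ with $M\lk N^q$ and, via L\"owenheim--Skolem inside $N^q$, a model $M^q\lk N^q$ of size $\lambda$ containing $M\cup\bar b^q$; stability in $\lambda$ gives $\card{\gs(M^q)}\leq\lambda$. I would then define a map $\Phi\colon\gs^\alpha(M)\to\bigsqcup_{q\in\gs^\beta(M)}\gs(M^q)$ as follows: for $p\in\gs^\alpha(M)$ realized by $\bar a$ in some $N$ with $M\lk N$, let $q=p\upharpoonright\beta$; since $\bar a\upharpoonright\beta$ and $\bar b^q$ both realize $q$, $\lambda$-$AP$ produces an amalgam $N^*$ with $\mathbf{K}$-embeddings $f\colon N\to N^*$ and $h\colon N^q\to N^*$ fixing $M$ such that $f(\bar a\upharpoonright\beta)=h(\bar b^q)$; set $\Phi(p)\defeq\tp(f(a_\beta)/h(M^q);N^*)$, identified via $h$ with a type in $\gs(M^q)$. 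A second application of $\lambda$-$AP$ shows that $\Phi$ is well-defined and injective: two amalgams witnessing the same value of $\Phi$ can be re-amalgamated over $h(M^q)$ to exhibit $p=p'$. Hence $\card{\gs^\alpha(M)}\leq\card{\gs^\beta(M)}\cdot\sup_q\card{\gs(M^q)}\leq\lambda\cdot\lambda=\lambda$.

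For the limit case, the restriction map sends $p\in\gs^\alpha(M)$ to $\langle p\upharpoonright\beta:\beta<\alpha\rangle$ inside $\prod_{\beta<\alpha}\gs^\beta(M)$; by induction the codomain has size at most $\lambda^{\card{\alpha}}=\lambda$. Injectivity reduces to continuity of Galois types under $\lambda$-$AP$: if $p\upharpoonright\beta=p'\upharpoonright\beta$ for every $\beta<\alpha$, then $p=p'$. Realizing $p,p'$ by $\bar a\in N$ and $\bar a'\in N'$, I would build a continuous increasing chain $(N^*_\beta,f_\beta,g_\beta)_{\beta\leq\alpha}$ of amalgams of $N$ and $N'$ over $M$ satisfying $f_\beta(\bar a\upharpoonright\beta)=g_\beta(\bar a'\upharpoonright\beta)$: at successor stages, re-amalgamate $N^*_\beta$ with a witness amalgam for the $(\beta+1)$-type equality via $\lambda$-$AP$, extending the current embeddings; at limit stages, take direct limits; stage $\alpha$ then witnesses $p=p'$.

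The main obstacle is the amalgamation bookkeeping, particularly at the successor step of the limit-case construction: extending an amalgam identifying the first $\beta$ coordinates into one also identifying the $\beta$-th coordinate demands carefully re-amalgamating two separately constructed amalgams in a way that preserves the earlier identifications. Under the stronger hypotheses $AP$, $JEP$ and $NMM$ one could pass to a monster model and the whole argument would collapse to a single automorphism step; under the $\lambda$-$AP$ hypothesis alone, the amalgams must be constructed explicitly.
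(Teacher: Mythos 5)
Your successor step is essentially sound, with one caveat: $\Phi$ is not canonically well-defined (different amalgams may assign $f(a_\beta)$ different types over $h(M^q)$, since the amalgam only pins down $M$ and $\bar b^q$, not the rest of $M^q$), but fixing one value per $p$ suffices, and your re-amalgamation over $h(M^q)$ does give injectivity of the chosen values. The genuine gap is the limit step. The ``continuity'' you invoke --- that $\gtp(\bar a/M;N)$ is determined by $\{\gtp(\bar a\restriction\beta/M;N):\beta<\alpha\}$ for limit $\alpha$ --- is a nontrivial locality/shortness-type property, and the chain you describe does not deliver it. At a successor stage you hold the amalgam $N^*_\beta$ identifying $\bar a\restriction\beta$ with $\bar a'\restriction\beta$ and, separately, some witness amalgam for equality of the $(\beta+1)$-types; these share only $M$ (their embeddings of $N$ and $N'$ are unrelated), so there is no base over which to amalgamate the two while preserving both identifications. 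What you actually need is that \emph{inside} $N^*_\beta$ the elements $f_\beta(a_\beta)$ and $g_\beta(a'_\beta)$ have the same Galois type over the set $M\cup\bar c$, where $\bar c$ is the common image of the initial segments; and the equality $\gtp(\bar c^\frown f_\beta(a_\beta)/M)=\gtp(\bar c^\frown g_\beta(a'_\beta)/M)$, which is what you do have, does not give this for free, because the witnessing embeddings fix $M$ but may move $\bar c$. It can be recovered by a further amalgamation over the image of one embedding that inverts it, and the direct-limit coherence also needs care since your $f_\beta$ are not literally increasing as maps on $N$ --- but this is exactly the missing content, and as written the limit case (already needed for $\alpha=\omega$) is unproven.

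For comparison, the paper avoids transfinite induction on the length altogether: it reduces to $\kappa=\card{\alpha}$ and proves $\gs^\kappa_\lambda\leq(\gs^1_\lambda)^\kappa$ in \ref{bonlem3} by building a tree of models $\langle M_\nu:\nu\in\mu^{<\kappa}\rangle$ over $M$ in which each node realizes all $1$-types over its predecessor, coding each $\kappa$-type by the branch that records the $1$-type of each coordinate over the model at the corresponding level. Because those are types over \emph{models} already containing every previously identified coordinate, each step of the identifying chain of maps fixes everything built so far, and the successor-stage difficulty you face never arises (the paper still acknowledges, and suppresses, the direct-limit bookkeeping). If you want to salvage your induction, imitate this at limit stages: record, for each $\beta$, the type of the $\beta$-th coordinate over a model of size $\lambda$ containing $M$ and the earlier coordinates --- which is what your own successor step in effect already does.
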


The requirement $\lambda^{|\alpha|}=\lambda$ cannot be improved: let $\lambda^{|\alpha|}>\lambda$, take ${\bf K}$ be the well-orderings of type at most $\lambda$ and $\lk$ by initial segments. Then it is stable in $\lambda$ because there are only $\lambda$-many elements in the unique maximal model (which witnesses $AP$). It is not $\alpha$-stable because each element has different types, so the number of $\alpha$-types is exactly $\lambda^{|\alpha|}>\lambda$.

We will prove the theorem through a series of lemmas. We may assume $\alpha$ to be a cardinal $\kappa=|\alpha|$. Denote $\gs^1_\lambda\defeq\sup\{|\gs^1(M)|:M\in K,\nr{M}=\lambda\}$ and similarly $\gs^\kappa_\lambda\defeq\sup\{|\gs^\kappa(M)|:M\in K,\nr{M}=\lambda\}$.
\begin{lemma}\mylabel{bonlem1}{Lemma \thetheorem}
Suppose $\kappa\geq\lambda$, then $(\gs^1_\lambda)^\kappa=\gs^\kappa_\lambda=2^\kappa$.
\end{lemma}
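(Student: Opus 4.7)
The plan is to prove the two inequalities separately. I will first observe that, under $\lambda$-$AP$ and stability in $\lambda$, the value $\gs^1_\lambda$ equals $\lambda$ on the nose (assuming $\mathbf{K}$ has a model of size $\lambda$; otherwise both sides of the lemma are $0$). Indeed, distinct elements of any $M$ with $\|M\| = \lambda$ give distinct Galois $1$-types over $M$---any embedding witnessing type equality must fix $M$ pointwise and so identify the two elements---so $|\gs^1(M)| \geq |M| = \lambda$, and the stability bound $|\gs^1(M)| \leq \lambda$ then forces $\gs^1_\lambda = \lambda$. Since $\lambda \leq \kappa$, Hausdorff's formula yields $(\gs^1_\lambda)^\kappa = \lambda^\kappa = 2^\kappa$.

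For the upper bound $\gs^\kappa_\lambda \leq (\gs^1_\lambda)^\kappa$, I will give a crude L\"owenheim--Skolem count. Fix $M$ of size $\lambda$; each $p \in \gs^\kappa(M)$ has a realization $(N, \bar{a})$ with $M \leq N$, $\|N\| \leq \lambda + \kappa = \kappa$, and $\bar{a} \in N^\kappa$. The number of such $N$ up to $M$-isomorphism is bounded by $I(\kappa, \mathbf{K}) \cdot \kappa^\lambda \leq 2^\kappa$, and each such $N$ contains at most $\kappa^\kappa = 2^\kappa$ many $\kappa$-tuples, whence $|\gs^\kappa(M)| \leq 2^\kappa = (\gs^1_\lambda)^\kappa$.

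For the lower bound $\gs^\kappa_\lambda \geq (\gs^1_\lambda)^\kappa$, I will pick any $M$ of size $\lambda$ and build an extension $N \supseteq M$ of size $\lambda$ in which every $p \in \gs^1(M)$ is realized by some fixed element $a_p \in N$. This uses $\lambda$-$AP$ to amalgamate a $\lambda$-indexed chain of single-type realizations while keeping the cardinality at $\lambda$, with stability bounding the enumeration length by $\lambda = |\gs^1(M)|$. For each $f \colon \kappa \to \gs^1(M)$ I then set $\bar{b}^f \defeq (a_{f(i)})_{i<\kappa}$; since $\gtp(b_i^f / M) = f(i)$, distinct $f, f'$ force distinct Galois $\kappa$-types over $M$ by restricting to any coordinate where they disagree, so $|\gs^\kappa(M)| \geq |\gs^1(M)|^\kappa = \lambda^\kappa = (\gs^1_\lambda)^\kappa$.

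The main technical step is the construction of the universal size-$\lambda$ extension $N$; everything else reduces to a cardinal arithmetic count powered by $\kappa \geq \lambda$. I do not expect any serious obstacle beyond ensuring that each successive amalgamation stays within size $\lambda$, which is routine given $\lambda$-$AP$ and the stability bound $|\gs^1(M)| \leq \lambda$.
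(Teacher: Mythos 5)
Your proof is correct, but it takes a noticeably heavier route than the paper's, and the extra hypotheses it consumes are worth flagging. The paper disposes of the lemma by a pure cardinal-arithmetic squeeze: $2^\kappa\leq(\gs^1_\lambda)^\kappa\leq(2^\lambda)^\kappa=2^\kappa$, using only the generic bound $\gs^1_\lambda\leq2^\lambda$, and $2^\kappa\leq\gs^\kappa_\lambda\leq2^{\lambda+\kappa}=2^\kappa$, where the lower bound comes from the $\{a,b\}$-valued sequences of length $\kappa$ sitting inside a single model $M$ --- these are algebraic, so distinguishing their types over $M$ needs no amalgamation and no stability. You instead invoke stability in $\lambda$ to pin down $\gs^1_\lambda=\lambda$ exactly, and $\lambda$-$AP$ plus stability to build a size-$\lambda$ extension realizing all of $\gs^1(M)$; that universal-extension construction is essentially the content of the $\kappa\leq\lambda$ case (\ref{bonlem2}) and is overkill here, since two distinct elements of $M$ already yield $2^\kappa=\lambda^\kappa=(\gs^1_\lambda)^\kappa$ many $\kappa$-types. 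The difference is not merely aesthetic: \ref{fscor3} later appeals to \ref{bonlem1} to reduce to the case $\kappa<\lambda$ for arbitrary AECs with $\lambda$-$AP$, before any stability assumption is made, so the paper's hypothesis-free argument is the one that supports that application, whereas yours covers only the stable case (which does suffice for the proof of \ref{bonthm} itself). Two minor points: the identity $\lambda^\kappa=2^\kappa$ for $2\leq\lambda\leq\kappa$ is just the squeeze $2^\kappa\leq\lambda^\kappa\leq(2^\lambda)^\kappa=2^\kappa$, not Hausdorff's formula; and in your upper-bound count you should justify $I(K,\kappa)\leq2^\kappa$ (a model of size $\kappa$ in a finitary language of size $\leq\lambda\leq\kappa$ is one of at most $2^\kappa$ structures on a fixed universe), after which the estimate goes through as you state.
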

\begin{proof}
$2^\kappa\leq(\gs_\lambda^1)^\kappa\leq(2^\lambda)^\kappa=2^\kappa$. Pick any $\nr{M}=\lambda$ and two distinct elements $\{a,b\}$ from $|M|$. Form binary sequences from $\{a,b\}$ of length $\kappa$, which shows $2^\kappa\leq\gs_\lambda^\kappa\leq2^{\lambda+\kappa}=2^\kappa$. 
\end{proof}
\begin{lemma}\emph{\cite[Proposition 2.7]{bon3.1}}\mylabel{bonlem2.5}{Lemma \thetheorem}
Suppose $\kappa\leq\lambda$. If in addition ${\bf K}$ has $\lambda$-$JEP$ and $\cf(\gs_\lambda^\kappa)\leq\lambda$, then there is $M\in K$, $\nr{M}=\lambda$ such that $|\gs^\kappa(M)|=\gs_\lambda^\kappa$.
\end{lemma}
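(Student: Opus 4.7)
The plan is to realize the supremum $\mu\defeq\gs_\lambda^\kappa$ at a single size-$\lambda$ model built by chaining together witnesses to cofinal values below $\mu$, using $\lambda$-$JEP$ at each successor stage. Since $\theta\defeq\cf(\mu)\leq\lambda$, fix an increasing sequence $\langle\mu_i:i<\theta\rangle$ cofinal in $\mu$, and by the definition of $\gs_\lambda^\kappa$ as a supremum pick, for each $i<\theta$, a model $M_i\in K$ with $\nr{M_i}=\lambda$ and $|\gs^\kappa(M_i)|\geq\mu_i$.

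Next I would construct, by induction on $i\leq\theta$, a continuous $\lk$-increasing chain $\langle N_i:i\leq\theta\rangle$ in $K$ with $\nr{N_i}=\lambda$ together with $\lk$-embeddings $f_i:M_i\to N_i$. Take $N_0$ to be an isomorphic copy of $M_0$. At a successor stage, given $N_i$ and $M_{i+1}$, $\lambda$-$JEP$ yields a common extension and the downward L\"owenheim-Skolem axiom for AECs cuts it down to $N_{i+1}\in K$ of size $\lambda$ with $N_i\lk N_{i+1}$ and a $\lk$-embedding $f_{i+1}:M_{i+1}\to N_{i+1}$. At a limit $j\leq\theta$ take $N_j\defeq\bigcup_{i<j}N_i$, whose size is $\lambda\cdot\theta=\lambda$ because $\theta\leq\lambda$. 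Setting $M\defeq N_\theta$, we get $\nr{M}=\lambda$ and $f_i(M_i)\lk M$ for every $i<\theta$.

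The heart of the argument is then the monotonicity $|\gs^\kappa(M)|\geq|\gs^\kappa(f_i(M_i))|$ for each $i$; combined with $|\gs^\kappa(f_i(M_i))|=|\gs^\kappa(M_i)|\geq\mu_i$ and a supremum over $i$, this delivers $|\gs^\kappa(M)|\geq\mu$, while the reverse inequality $|\gs^\kappa(M)|\leq\mu$ is immediate from $\nr{M}=\lambda$ and the definition of $\gs_\lambda^\kappa$. I expect this monotonicity to be the main obstacle: under $AP$ it is a routine type-extension argument, but with only $\lambda$-$JEP$ on hand one must transfer a type $p\in\gs^\kappa(f_i(M_i);N^*)$ to a type over $M$ by invoking $\lambda$-$JEP$ on $N^*$ and $M$, tracking the $f_i(M_i)$-orbit of a chosen realization across the resulting joint embedding, and then checking that distinct $f_i(M_i)$-orbits do not collapse after transfer. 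This last injectivity check is the delicate point that clarifies why $\lambda$-$JEP$ is the correct hypothesis, and it is the step where I would invest the most care.
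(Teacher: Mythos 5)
Your construction is the same as the paper's: the paper's proof picks $\langle M_i:i<\cf(\gs_\lambda^\kappa)\rangle$ witnessing the supremum, uses $\lambda$-$AP$ and $\lambda$-$JEP$ to produce a single $M$ of size $\lambda$ with $M\geq M_i$ for all $i$, and concludes $|\gs^\kappa(M)|\geq\sup_i|\gs^\kappa(M_i)|=\gs_\lambda^\kappa$; your chain-plus-L\"owenheim--Skolem argument is just that amalgamation written out. The one place you hedge --- the monotonicity $|\gs^\kappa(M)|\geq|\gs^\kappa(f_i(M_i))|$ --- is resolved by noting that $\lambda$-$AP$ is an ambient hypothesis here: the ``in addition'' in the statement refers to the standing assumptions of \ref{bonthm}, and the paper's proof explicitly invokes $\lambda$-$AP$ at this step. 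So the monotonicity is the routine $AP$ type-extension argument (every $p\in\gs^\kappa(f_i(M_i))$ extends to a type over $M$ by amalgamating the realizing model with $M$ over $f_i(M_i)$, and distinct types have distinct extensions since restriction is well-defined). The $JEP$-only transfer you sketch as a fallback would \emph{not} work: applying $\lambda$-$JEP$ to $N^*$ and $M$ gives embeddings with no compatibility over $f_i(M_i)$, so the image of a realization carries no information about its type over $f_i(M_i)$ and distinct types can collapse --- which is precisely why the paper needs $AP$ here and reserves $JEP$ for joining the different $M_i$'s, and why \ref{fscor3} can exhibit a failure of the conclusion when $JEP$ is dropped.
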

\begin{proof}
Pick $\langle M_i:i<\mu\rangle$ (not necessarily increasing) witnessing $\mu\defeq\cf(\gs_\lambda^\kappa)\leq\lambda$. By $\lambda$-$AP$ and $\lambda$-$JEP$, obtain $M$ of size $\lambda$ such that $M\geq M_i$ for all $M_i$. $|\gs^\kappa(M)|\geq\sup_{i<\mu}|\gs^\kappa(M_i)|=\gs_\lambda^\kappa$. 
\end{proof}
\begin{lemma}\emph{\cite[Theorem 3.2]{bon3.1}}\mylabel{bonlem2}{Lemma \thetheorem}
Suppose $\kappa\leq\lambda$. If in addition ${\bf K}$ has $\lambda$-$JEP$, then $(\gs^1_\lambda)^\kappa\leq\gs^\kappa_\lambda$.
\end{lemma}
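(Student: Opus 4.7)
The plan is: from $\mu$ distinct $1$-types over a single $\lambda$-sized model $M$, manufacture $\mu^\kappa$ distinct $\kappa$-types over $M$ by taking $\kappa$-length sequences of realizations; then pass to the supremum via a cardinal-arithmetic case split on $\cf(\gs^1_\lambda)$.

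For the main construction, fix $M$ of size $\lambda$ together with $\mu$ distinct $1$-types $\{p_\alpha : \alpha < \mu\}$ over $M$. For each $f : \kappa \to \mu$, build by induction on $i \leq \kappa$ an increasing chain $M = N_f^{(0)} \leq N_f^{(1)} \leq \cdots$ of $\lambda$-sized extensions of $M$ together with designated elements $a_i^f \in N_f^{(i+1)}$ satisfying $\gtp(a_i^f/M;N_f^{(i+1)}) = p_{f(i)}$. At a successor step, pick $N'_i \geq M$ of size $\lambda$ containing some realization $b$ of $p_{f(i)}$, $\lambda$-amalgamate $N_f^{(i)}$ with $N'_i$ over $M$ (renaming so that $M \leq N_f^{(i)} \leq N_f^{(i+1)}$ is the standard inclusion), and let $a_i^f$ be the image of $b$ in $N_f^{(i+1)}$. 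At limits, take directed unions; these stay of size $\leq \lambda$ because $\kappa \leq \lambda$. Set $N_f \defeq N_f^{(\kappa)}$ and $q_f \defeq \gtp((a_i^f)_{i<\kappa}/M;N_f) \in \gs^\kappa(M)$. If $f \neq f'$ differ at coordinate $i$, then equality of $q_f, q_{f'}$ would restrict at that coordinate to equality of the $1$-types $p_{f(i)} = p_{f'(i)}$, a contradiction. Hence $|\gs^\kappa(M)| \geq \mu^\kappa$, so $\mu^\kappa \leq \gs^\kappa_\lambda$.

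To close the inequality at the target supremum, case-split on $\cf(\gs^1_\lambda)$. If $\cf(\gs^1_\lambda) \leq \lambda$, then \ref{bonlem2.5} (with parameter $1$) produces $M$ of size $\lambda$ attaining $|\gs^1(M)| = \gs^1_\lambda$, and the construction above with $\mu = \gs^1_\lambda$ yields $(\gs^1_\lambda)^\kappa \leq \gs^\kappa_\lambda$; this is the sole use of $\lambda$-$JEP$. Otherwise $\cf(\gs^1_\lambda) > \lambda \geq \kappa$, so every $f : \kappa \to \gs^1_\lambda$ has bounded range, giving $(\gs^1_\lambda)^\kappa = \sup_{\mu < \gs^1_\lambda} \mu^\kappa \leq \gs^\kappa_\lambda$ from the main construction applied to each $\mu < \gs^1_\lambda$. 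The main obstacle is bookkeeping in the successor amalgamation: one must identify the copy of $M$ consistently across stages so that realizations at different coordinates are witnessed over the \emph{same} base, which is what makes the coordinate-wise comparison of the $q_f$ legitimate. The case split is what forces $\lambda$-$JEP$ into the hypothesis, because when $\cf(\gs^1_\lambda) \leq \kappa$ the cofinality shortcut fails and one genuinely needs a single model attaining $\gs^1_\lambda$.
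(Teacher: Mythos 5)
Your proposal follows essentially the same route as the paper: the coordinate-wise injection of $\mu^\kappa$ sequences of realizations into $\gs^\kappa(M)$ uses only $\lambda$-$AP$ (the paper realizes all of $\gs(M)$ at once in a single, possibly large, $N\geq M$ rather than running a $\lambda$-sized chain for each $f$, but that difference is cosmetic), and $\lambda$-$JEP$ enters only through \ref{bonlem2.5} in the small-cofinality case, exactly as in the paper. One small correction: in your second case the identity $(\gs^1_\lambda)^\kappa=\sup_{\mu<\gs^1_\lambda}\mu^\kappa$ can fail when $\gs^1_\lambda$ is a successor cardinal $\theta^+$ with $\theta^\kappa=\theta$; but in that situation the supremum defining $\gs^1_\lambda$ is attained by some $M$ and the main construction applies to that $M$ directly, which is why the paper phrases the cofinality step as $\big(\sup_M|\gs^1(M)|\big)^\kappa=\sup_M|\gs^1(M)|^\kappa$ over the actual set of values rather than over all cardinals below $\gs^1_\lambda$.
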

\begin{proof}
Given $M\in K$ of size $\lambda$. We show that $|\gs^\kappa(M)|\geq|\gs^1(M)|^\kappa$, which does not use $\lambda$-$JEP$. By $\lambda$-$AP$, pick $N\geq M$ (perhaps of size greater than $\lambda$) such that $N$ realizes $\gs(M)$, say by $\langle a_i:i<|\gs(M)|\rangle$. Form sequences of length $\kappa$ from the $a_i$, there are $|\gs(M)|^\kappa$-many sequences. They realize distinct types in $\gs^\kappa(M)$ by checking each coordinate. 

Suppose $\cf(\gs_\lambda^1)\leq\kappa$, then $\cf(\gs_\lambda^1)\leq\lambda$. Substitute $\kappa=1$ in \ref{bonlem2.5} (which uses $\lambda$-$JEP$), there is $M^*\in K_\lambda$ such that $|\gs^1(M^*)|=\gs_\lambda^1$. Thus $(\gs^1_\lambda)^\kappa=|\gs^1(M^*)|^\kappa\leq|\gs^\kappa(M^*)|\leq\gs^\kappa_\lambda$.

Now suppose $\cf(\gs_\lambda^1)>\kappa$, then a cofinality argument shows the second equality below: \begin{align*}(\gs_\lambda^1)^\kappa\defeq&(\sup\{|\gs^1(M)|:M\in K, \nr{M}=\lambda\})^\kappa\\=&\sup\{|\gs^1(M)|^\kappa:M\in K, \nr{M}=\lambda\}\\\leq&\sup\{|\gs^\kappa(M)|:M\in K, \nr{M}=\lambda\}\eqdef\gs_\lambda^\kappa\end{align*}
\end{proof}
\begin{remark}\mylabel{bonresrmk}{Remark \thetheorem}
After \cite[Proposition 2.7]{bon3.1}, Boney suggested that $\lambda$-$JEP$ might be necessary. We will show in \ref{fscor3} that the need of $\lambda$-$JEP$ is independent of ZFC for the above two lemmas.
\end{remark}
\begin{ques}
In \cite[Proposition 2.7]{bon3.1}, there is an alternative hypothesis to \ref{bonlem2.5} where $\cf(\gs_\lambda^\kappa)\leq\lambda$ is replaced by a stronger assumption $I(K,\lambda)\leq\lambda$. Would $\lambda$-$JEP$ be necessary in this case or is it again independent of ZFC? An answer would shed light on the relationship between stability and the number of nonisomorphic models.
\end{ques}
\begin{lemma}\emph{\cite[Theorem 3.5]{bon3.1}}\mylabel{bonlem3}{Lemma \thetheorem}
Suppose $\kappa\leq\lambda$, then $(\gs^1_\lambda)^\kappa\geq\gs^\kappa_\lambda$.
\end{lemma}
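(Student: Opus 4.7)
The plan is to encode each $\kappa$-type over a $\lambda$-sized model by a $\kappa$-sequence of $1$-types obtained along a resolution of a realization. Fix $M\in K$ with $\nr{M}=\lambda$; it suffices to show $\card{\gs^\kappa(M)}\leq(\gs^1_\lambda)^\kappa$, and then take the supremum over such $M$. Given $p\in\gs^\kappa(M)$, pick a realization $\bar{a}^p=\langle a^p_i:i<\kappa\rangle$ in some $N^p\geq M$. Using the Löwenheim--Skolem axiom together with $\kappa\leq\lambda$, build a continuous $\lk$-increasing chain $\langle M^p_i:i\leq\kappa\rangle$ of $\lk$-submodels of $N^p$, each in $K_\lambda$, with $M^p_0=M$ and $\{a^p_j:j<i\}\subseteq M^p_i$; the hypothesis $\kappa\leq\lambda$ keeps the unions at limits in $K_\lambda$. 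Associate to $p$ the tower $\Phi(p):=\langle \gtp(a^p_i/M^p_i;M^p_{i+1}):i<\kappa\rangle$ of $1$-types.

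The main claim is injectivity of $\Phi$ modulo a natural equivalence on towers: call $\Phi(p)$ and $\Phi(q)$ equivalent if there exist coherent $\lk$-isomorphisms $f_i:M^p_i\to M^q_i$ fixing $M$ pointwise with $f_{i+1}(a^p_i)=a^q_i$. If the towers are equivalent, then $\bigcup_{i<\kappa}f_i$ is an isomorphism over $M$ sending $\bar{a}^p$ to $\bar{a}^q$, forcing $p=q$. The $f_{i+1}$ are built inductively from $f_i$: the map $f_i$ carries $\gtp(a^p_i/M^p_i)$ to $\gtp(a^q_i/M^q_i)$, so there is an $\lk$-isomorphism over $f_i$ identifying the two realizations, after which one amalgamates $M^p_{i+1}$ and $M^q_{i+1}$ over $M^p_i\cong_{f_i}M^q_i$ by $\lambda$-$AP$ inside an ambient $K_\lambda$-model.

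For the count, at stage $i$ the $1$-type $\gtp(a^p_i/M^p_i)$ ranges over $\gs^1(M^p_i)$, of cardinality at most $\gs^1_\lambda$ by definition; once this $1$-type is fixed, $\lambda$-$AP$ gives a common extension of $M^p_i$ realizing it, so up to tower equivalence there are at most $\gs^1_\lambda$ extensions at each of the $\kappa$ stages, bounding the range of $\Phi$ by $(\gs^1_\lambda)^\kappa$. The main obstacle will be running the back-and-forth cleanly without a monster model---each inductive step requires amalgamating $M^p_{i+1}$ and $M^q_{i+1}$ inside a common $K_\lambda$-model, which is the sole place where $\lambda$-$AP$ is needed. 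Notably $\lambda$-$JEP$ plays no role here, which matches the expectation (and is consistent with \ref{bonresrmk}) that $(\gs^1_\lambda)^\kappa\geq\gs^\kappa_\lambda$ is the ``easy'' direction in Boney's framework.
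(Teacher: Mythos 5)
Your underlying idea---resolving a realization of a $\kappa$-type and recording the $\kappa$-sequence of $1$-types along the resolution---is the same as the paper's, but two of your steps do not go through as written. First, the coherent isomorphisms $f_i:M^p_i\to M^q_i$ cannot be produced by $\lambda$-$AP$: amalgamating $M^p_{i+1}$ and $M^q_{i+1}$ over $M^p_i\cong_{f_i}M^q_i$ yields two embeddings into a common extension, not an isomorphism between the two prescribed models, so $\bigcup_{i<\kappa}f_i$ need not exist and your tower equivalence is not achievable by the induction you describe. You flag this as ``the main obstacle'' but do not resolve it. The paper avoids the problem by building one fixed tree of models $\langle M_\nu\in K_\lambda:\nu\in\mu^{<\kappa}\rangle$ (where $\mu=\gs^1_\lambda$), with a fixed enumeration $\langle q^\nu_i:i<\chi_\nu\rangle$ of $\gs^1(M_\nu)$ and a designated realization $c^\nu_i\in M_{\nu^\frown i}$ at each node; each type $p_k$ is sent to the branch $\eta_k$ via the rule that $\eta_k[\alpha]$ is the \emph{least} $i$ with $a^\alpha_k\vDash q^{\eta_k\restriction\alpha}_i$, and injectivity is obtained by mapping the realization $\bar{a}_k$ \emph{into the tree} by increasing partial maps fixing $|M|$ (never by isomorphisms between two competing towers), so that any two types with the same branch both equal the Galois type of the same canonical sequence $\langle c^{\eta\restriction\alpha}_{\eta[\alpha]}:\alpha<\kappa\rangle$ over $M$.

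Second, your counting step is not justified as stated: the models $M^p_i$ vary with $p$, so ``at most $\gs^1_\lambda$ extensions at each of the $\kappa$ stages'' does not by itself bound the number of equivalence classes of towers by $(\gs^1_\lambda)^\kappa$; one needs the stage-$i$ choices to be parametrized uniformly across all towers that agree (up to equivalence) through stage $i$. That uniform parametrization is exactly what the fixed tree together with the least-index rule provides: the assignment $p_k\mapsto\eta_k\in\mu^\kappa$ is then a genuine injection into a set of size $\mu^\kappa$. Replacing your per-type resolutions by such a universal tree would repair both gaps and essentially recovers the paper's argument.
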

\begin{proof}
First we describe the proof strategy: for a fixed model $M$, we show that $\gs^\kappa(M)$ is bounded above by $(\gs^1_\lambda)^\kappa$. To do so, we build a $\gs^1_\lambda$-branching tree of models of height $\kappa$ and list the possible 1-types of each model. For each $\kappa$-type in $\gs^\kappa(M)$, we map it injectively to a branch of the tree (which is a sequence in $(\gs^1_\lambda)^\kappa$), according to the 1-types of the elements from that sequence.

Let $\mu\defeq\gs_\lambda^1$. Fix an arbitrary $M\in K$ with $\nr{M}=\lambda$. Write $\gs^\kappa(M)=\langle p_k:k<\chi\rangle$ where $p_k$ are distinct. Fix $\bar{a}_k\defeq\{a_k^\alpha:\alpha<\kappa\}\vDash p_k$. Construct a tree of models $\langle M_\nu\in K_\lambda:\nu\in\mu^{<\kappa} \rangle$ as follows: $M_{\langle\rangle}\defeq M$, take union at limit stages. Suppose $M_\nu$ is built for some $\nu\in\mu^{<\kappa}$. Enumerate without repetition $\gs^1(M_\nu)=\langle q_i^\nu:i<\chi_\nu\rangle$ for some $\chi_\nu\leq\mu$. For $i<\chi_\nu$, define $M_{\nu^\frown i}\in K_\lambda$ with $M_{\nu^\frown i}\geq M_\nu$ and containing some $c_i^\nu\vDash q_i^\nu$. For $\chi_\nu\leq i<\mu$ (if there is any), give a default value to $M_{\nu^\frown i}\defeq M_\nu$. 
Now we map each $p_k\in\gs^\kappa(M)$ to $\eta_k\in\mu^\kappa$ as follows: suppose $\nu\defeq\eta_k\restriction\alpha$ has been defined for some $\alpha<\kappa$, we set $\eta_k[\alpha]$ to be the minimum $i<\chi_\nu$ (which is the same as requiring $i<\mu$) such that $a_k^\alpha$ realizes $q_i^\nu$. In other words, we decide the $\alpha$-th element of the branch based on the type of the $\alpha$-th element of $\bar{a}_k$ over the current node (model). 

It remains to check that the map is injective. Let $k<\chi$, we build $\langle f_\alpha:\alpha\leq\kappa \rangle$ increasing and continuous such that $f_{\alpha}$ maps $a_k^\beta$ to $c_{\eta_k[\beta]}^{\eta_k\restriction\beta}$ for all $\beta\leq\alpha$ while fixing $|M|$. Take $f_{-1}\defeq\oop{id}_M$ and we handle the successor case: suppose $f_\alpha$ has been constructed. There is $g:a_k^\alpha\mapsto c_{\eta_k[\alpha]}^{\eta_k\restriction\alpha}$ fixing $|M_{\eta_k\restriction\alpha}|\supseteq\big\{c_{\eta_k[\beta]}^{\eta_k\restriction\beta}:\beta<\alpha\big\}$ by type equality. Let $f_{\alpha+1}\defeq g\circ f_\alpha$. Now $f_\kappa$ witnesses that $\bar{a}_k$ and $\langle c_{\eta_k[\alpha]}^{\eta_k\restriction\alpha}:\alpha<\kappa\rangle$ realize the same type over $M$. Since the latter sequence only depends on the coordinates of $\eta_k$, our map $p_k\mapsto\eta_k$ is injective. Therefore, $|\gs^\kappa(M)|\leq\mu^\kappa$. Since $M$ is arbitrary, $\gs_\lambda^\kappa\leq\mu^\kappa=(\gs_\lambda^1)^\kappa$.
\end{proof}

\begin{lemma}\emph{\cite[Theorem 3.1]{bon3.1}}\mylabel{bonlem4}{Lemma \thetheorem}
Let ${\bf K}$ be an AEC with $\lambda$-$AP$ and $\lambda$-$JEP$. Let $\kappa\geq1$ be a cardinal. Then $(\gs^1_\lambda)^\kappa=\gs^\kappa_\lambda$ where $\lambda$-$JEP$ is used in the ``$\leq$'' direction.
\end{lemma}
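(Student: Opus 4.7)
The statement is essentially the combination of Lemmas \ref{bonlem1}, \ref{bonlem2}, and \ref{bonlem3}, so the plan is simply to split into cases according to how $\kappa$ compares to $\lambda$ and invoke the appropriate lemma on each side.

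First I would dispose of the large case $\kappa\geq\lambda$. Here Lemma \ref{bonlem1} already gives $(\gs^1_\lambda)^\kappa=\gs^\kappa_\lambda=2^\kappa$, and the argument for that lemma nowhere used $\lambda$-$JEP$; only $\lambda$-$AP$ is implicitly used to make Galois types well-behaved. So in this case both inequalities are free.

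For the remaining case $1\leq\kappa\leq\lambda$ (which in particular covers finite $\kappa$), I would treat the two inequalities separately. The direction $(\gs^1_\lambda)^\kappa\geq\gs^\kappa_\lambda$ is exactly Lemma \ref{bonlem3}, whose proof constructs a tree of $\lambda$-sized models using only $\lambda$-$AP$ and maps each $\kappa$-type injectively into $\mu^\kappa$; so this direction requires no appeal to $\lambda$-$JEP$. The reverse direction $(\gs^1_\lambda)^\kappa\leq\gs^\kappa_\lambda$ is Lemma \ref{bonlem2}: pick an $M$ of size $\lambda$ witnessing $\gs^1_\lambda$, use $\lambda$-$AP$ to find an extension realizing all $1$-types over $M$, and form all $\kappa$-sequences from those realizations; their $\kappa$-types over $M$ are distinct because they already differ coordinatewise. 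The step where $\lambda$-$JEP$ enters is passing from the pointwise supremum $\sup\{|\gs^1(M)|^\kappa\}$ to $(\sup\{|\gs^1(M)|\})^\kappa$: one needs a single $M$ of size $\lambda$ whose $|\gs^1(M)|$ is close to the supremum $\gs^1_\lambda$, and this is obtained (via Lemma \ref{bonlem2.5} when $\cf(\gs_\lambda^1)\leq\lambda$, or otherwise by a cofinality argument as in Lemma \ref{bonlem2}) by amalgamating witnesses under $\lambda$-$JEP$. Concatenating the two inequalities yields equality, with $\lambda$-$JEP$ used exactly in the $\leq$ direction.

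The only step that needs care is checking that the case split truly covers everything, in particular reconciling the two cases at $\kappa=\lambda$; there Lemma \ref{bonlem1} gives the answer $2^\lambda$ directly and agrees with what Lemmas \ref{bonlem2} and \ref{bonlem3} produce. I do not anticipate any serious obstacle, as all the substantive work has been isolated into the preceding lemmas; the remark that $\lambda$-$JEP$ enters only in the $\leq$ direction is simply a bookkeeping observation that will later feed into Remark \ref{bonresrmk} and the independence result \ref{fscor3}.
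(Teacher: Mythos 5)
Your proposal is correct and matches the paper's proof, which is literally ``Combine \ref{bonlem1}, \ref{bonlem2} and \ref{bonlem3}'': the case split at $\kappa$ versus $\lambda$ and the attribution of $\lambda$-$JEP$ to the ``$\leq$'' direction (via attaining or approximating the supremum $\gs^1_\lambda$ by a single model) are exactly how the paper organizes the argument.
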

\begin{proof}
Combine \ref{bonlem1}, \ref{bonlem2} and \ref{bonlem3}.
\end{proof}
\begin{proof}[Proof of \ref{bonthm}]
Let $\kappa=|\alpha|$. By \ref{bonlem4}, $\lambda=\lambda^{\kappa}=(\gs^1_\lambda)^\kappa\geq\gs^\kappa_\lambda$ which shows that ${\bf K}$ is $\kappa$-stable in $\lambda$. By reordering the index $\kappa$, ${\bf K}$ is $\alpha$-stable in $\lambda$.
\end{proof}

\begin{theorem}\emph{\cite[Corollary 6,4]{GV06b}}\mylabel{gvstab}{Theorem \thetheorem}
Let $\mu$ be an infinite cardinal and ${\bf K}$ be an AEC with $AP$. If ${\bf K}$ is $\mu$-tame and stable in $\mu$, then ${\bf K}$ is stable in all $\lambda=\lambda^\mu$. 
\end{theorem}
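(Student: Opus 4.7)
The plan is to follow the Grossberg--VanDieren strategy from \cite{GV06b}, recast syntactically via Galois Morleyization as signalled in the introduction. The idea is to use $\mu$-splitting as a proxy for first-order non-forking: bound the number of types over a $\lambda$-sized model by exhibiting each as a non-splitting extension of a type over a $\mu$-sized submodel, and then count pairs $(N, q)$ with $\nr{N} \leq \mu$ and $q \in \gs(N)$.

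Fix a monster model $\mn$ (available from $AP$ together with standard largeness conventions) and let $M \in K$ with $\nr{M} = \lambda$. Recall that $p \in \gs(M)$ \emph{does not $\mu$-split over} $N \leq M$ (with $\nr{N} \leq \mu$) if for any $N_1, N_2 \leq M$ of size at most $\mu$ containing $N$ and any ${\bf K}$-isomorphism $f: N_1 \cong_N N_2$, one has $f(p\restriction N_1) = p\restriction N_2$. I would establish two ingredients: \emph{local character}, that every $p \in \gs(M)$ does not $\mu$-split over some $\mu$-sized $N \leq M$; and \emph{uniqueness of non-splitting extensions}, that if $p, q \in \gs(M)$ do not $\mu$-split over the same $N$ and agree on $N$, then $p = q$.

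Local character is proved by contradiction: assuming some $p$ $\mu$-splits over every $\mu$-sized $N \leq M$, build an increasing continuous chain $\langle N_i : i < \mu^+\rangle$ of $\mu$-sized submodels of $M$ together with pairs witnessing a splitting at each successor step, and extract $\mu^+$-many distinct Galois types over the union $\bigcup_i N_i$ (a model of size $\mu$), contradicting stability in $\mu$. Uniqueness uses $\mu$-tameness plus $AP$: any candidate distinguishing $\mu$-sized $N' \leq M$ with $p\restriction N' \neq q\restriction N'$ can be amalgamated with $N$ inside a single $\mu$-sized $N'' \leq M$, and the definition of non-splitting then collapses $p\restriction N''$ with $q\restriction N''$, a contradiction. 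Combining, the map $p \mapsto (N_p, p\restriction N_p)$ is injective; since the number of $\mu$-sized submodels of $M$ is at most $\lambda^\mu = \lambda$ and $\card{\gs(N)} \leq \mu$ for each such $N$ by $\mu$-stability, one obtains $\card{\gs(M)} \leq \lambda \cdot \mu = \lambda$.

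The main obstacle is the local character: the witnesses at successor stages must be orchestrated so that $\mu^+$ many end up realizing distinct Galois types over the union, which typically calls for a Fodor-style reflection or a coherent tree-coding of splittings. Section 7's syntactic reformulation via Galois Morleyization is designed to streamline precisely this step by transferring it to the classical first-order stability spectrum argument inside an expanded language $L^*$ of size $\leq 2^{\ls}$, where $\mu$-stability of ${\bf K}$ corresponds to syntactic $\mu$-stability of the Morleyized theory, and the final count $\card{\gs(M)}\leq\lambda$ follows from the definability of types in $\mu$-stable theories together with $\lambda = \lambda^\mu$.
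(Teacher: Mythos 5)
Your high-level architecture (local character for $\mu$-splitting, uniqueness of non-splitting extensions, then a $\lambda^\mu\cdot\mu$ count) is the same as the paper's, but two of your three ingredients are stated in forms that do not actually hold, and the proof would fail as written. The more serious problem is your uniqueness lemma: it is \emph{not} true that if $p,q\in\gs(M)$ do not $\mu$-split over $N$ and $p\restriction N=q\restriction N$ then $p=q$. Non-splitting extensions are only unique after passing to a \emph{universal} extension of the base: the paper's \ref{gvstablem1} requires $A\subset_u B$ and $p\restriction B=q\restriction B$, and the proof essentially needs to map an arbitrary $\mu$-sized witness of $p\neq q$ back into $B$ over $A$, which is exactly what universality provides and what agreement over $N$ alone cannot. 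Consequently your map $p\mapsto(N_p,\,p\restriction N_p)$ need not be injective. The paper repairs this by first saturating the ambient set, invoking \ref{gvstabprop}(1) (which itself uses stability in $\mu$) to find, inside $C$, a $\mu$-sized $B\supset_u A$ for each candidate base $A$, and then counting via \ref{gvstablem3}: for each of the $\lambda^\mu=\lambda$ bases $A$, the types over $C$ not splitting over $A$ inject into $\gs(B)$, which has size $\leq\mu$ by stability. Your amalgamation-based sketch of uniqueness does not supply the needed copy of the distinguishing set \emph{over} $N$ inside something whose type restriction is controlled.

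The second gap is in local character. A continuous chain $\langle N_i:i<\mu^+\rangle$ of $\mu$-sized sets has union of size $\mu^+$, not $\mu$, so producing $\mu^+$ types over that union contradicts nothing; and reflecting back down is precisely the nontrivial content you are eliding. The correct argument (the paper's \ref{gvstablem4}) takes the \emph{least} $\kappa\leq\mu$ with $2^\kappa>\mu$ and builds a binary tree of splitting witnesses of height $\kappa$, yielding $2^\kappa>\mu$ pairwise contradictory types over a set of size $\mu\cdot 2^{<\kappa}=\mu$ — this is the ``tree-coding'' you mention as an option, and it is the only version that closes. Finally, your closing appeal to ``definability of types'' in the Morleyized theory is not available: the Morleyization is quantifier-free in an infinitary language with no compactness, which is why Section 7 redevelops the splitting calculus (\ref{gsynspl}, \ref{gvstablem1}--\ref{gvstablem4}) syntactically rather than importing the first-order stability spectrum argument.
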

The original proof proceeds semantically and we will give a syntactic proof in Section \ref{synspl}, allowing stability over sets which can be of size $<\ls$.

\section{Stability and no order property}
To find the upper bound of the first stability cardinal in stable complete first-order theories, one possible way is to establish:
\begin{fact}[Shelah]
Let $T$ be a complete first-order theory. The following are equivalent:
\begin{enumerate}
\item $T$ is stable.
\item For all $\lambda=\lambda^{|T|}$, $T$ is stable in $\lambda$. 
\item $T$ has no (syntactic) order property of length $\omega$.
\end{enumerate}
\end{fact}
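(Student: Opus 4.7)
The plan is to establish the three implications (2) $\Rightarrow$ (1) $\Rightarrow$ (3) $\Rightarrow$ (2). The first is immediate from the definition of stability: witnessing stability at a single cardinal is sufficient, and (2), applied at any $\lambda$ with $\lambda^{|T|}=\lambda$ (e.g.\ $\lambda = 2^{|T|}$), provides one.

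For (1) $\Rightarrow$ (3) I would prove the contrapositive. Suppose some $\varphi(\bar x, \bar y)$ witnesses the order property of length $\omega$. By compactness $\varphi$ then witnesses the order property of every length. Given any $\lambda \geq |T|$, I would take a linear order $I$ of size $\lambda$ admitting $2^\lambda$ Dedekind cuts and, again by compactness, realize the $\varphi$-order pattern of $I$ by witnesses $\langle \bar a_i : i \in I\rangle$ in some model of $T$. Each cut of $I$ produces a distinct $\varphi$-type over the $\lambda$-sized set of witnesses, yielding $2^\lambda$ types and contradicting stability in $\lambda$ for every $\lambda \geq |T|$.

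The substantive direction is (3) $\Rightarrow$ (2). Fix $\lambda = \lambda^{|T|}$ and a set $A$ with $|A| \leq \lambda$, working inside a sufficiently saturated monster. The key lemma to establish is that NOP implies every complete type $p \in S(A)$ is \emph{definable} over a finite parameter set in $A$: for each formula $\varphi(\bar x, \bar y) \in L(T)$ there exists $\psi_\varphi(\bar y, \bar c_\varphi)$ with $\bar c_\varphi$ a finite tuple from $A$ such that $\varphi(\bar x, \bar b) \in p$ iff $\models \psi_\varphi(\bar b, \bar c_\varphi)$ for every $\bar b$ from $A^{|\bar y|}$. The derivation is again by contrapositive: if definability failed for some $\varphi$, one iteratively picks witnesses alternating between $\varphi$- and $\neg\varphi$-satisfaction inside the monster, and the resulting configuration is exactly an order-property pattern for $\varphi$ (or a fixed Boolean combination thereof), contradicting NOP.

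With definability in hand, the final counting is routine. Each $p \in S(A)$ is encoded by the tuple $\langle (\psi_\varphi, \bar c_\varphi) : \varphi \in L(T)\rangle$; there are at most $|T|$ choices for the defining schema $\psi_\varphi$ (as a formula of $L(T)$) and at most $|A|^{<\omega} = \lambda$ choices of parameters, so $|S(A)| \leq \lambda^{|T|} = \lambda$. The main obstacle I anticipate is the definability lemma itself, which is the essential combinatorial extraction from NOP; the remaining ingredients (compactness, the cut-counting construction for (1) $\Rightarrow$ (3), and the final product estimate) are standard bookkeeping and introduce no genuine difficulty.
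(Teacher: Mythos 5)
The paper states this Fact without proof (it is cited as Shelah's classical theorem and used only as motivation), so there is no in-paper argument to match; your cycle (2)$\Rightarrow$(1)$\Rightarrow$(3)$\Rightarrow$(2) with definability of types as the engine for (3)$\Rightarrow$(2) is the standard first-order route. Note, though, that the closest thing the paper does prove --- the AEC analogue in \ref{fsstthm} via \ref{fsstlem1} and \ref{fsstlem2} --- deliberately avoids definability and instead counts types via non-$\phi$-splitting over small parameter sets; that decomposition is what survives the passage to AECs, whereas definability of types does not.

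Two points in your sketch need repair. First, and more seriously, your key lemma is stated for types over an \emph{arbitrary} set $A$ with defining parameters in $A$; the classical theorem gives definability only for types over \emph{models}. Your proposed extraction (``iteratively pick witnesses alternating between $\varphi$- and $\neg\varphi$-satisfaction'') requires the alternating witnesses to be found inside the base itself, which is exactly why the paper's \ref{fsstlem2} must assume the base is $(<\chi)$-compact; for a bare set $A$ of size $\leq\lambda$ the construction stalls, and in general the trace $\{\bar b\in A^{|\bar y|}:\varphi(\bar x,\bar b)\in p\}$ need only be definable over $\operatorname{acl}^{\operatorname{eq}}(A)$, not over $A$. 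The standard fix is cheap: since $\lambda\geq|T|$, extend $A$ to a model $M$ with $\nr{M}=\lambda$, observe $|S(A)|\leq|S(M)|$ because every type over $A$ extends to one over $M$, and run definability and the product count $\lambda^{|T|}=\lambda$ over $M$. Second, in (1)$\Rightarrow$(3) you cannot in general choose a linear order of size $\lambda$ with $2^\lambda$ Dedekind cuts --- it is consistent that $\oop{ded}(\lambda)<2^\lambda$ --- but you only need $>\lambda$ cuts to refute stability in $\lambda$, and those always exist (take $2^{<\mu}$ ordered lexicographically with $\mu$ least such that $2^\mu>\lambda$). With these two adjustments the proof is correct.
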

$2^{|T|}$ is an upper bound for the first stability cardinal. Notice that in showing (3), compactness is used to stretch the order property to arbitrary length. In AECs, we can use the Hanf number to bound the (Galois) order property length. The following definition is based on \cite[Definition 4.3]{sh394} and \cite[Definition 4.3]{s5}:
\begin{definition}\mylabel{fsdef}{Definition \thetheorem}
Let $\mu$ be an infinite cardinal, $\alpha\geq2$ and $\beta\geq1$ be ordinals.
\begin{enumerate}
\item $K$ has the \emph{$\beta$-order property of length $\mu$} if there exists some $\langle a_i:i<\mu\rangle\subseteq M\in K$ such that $l(a_i)=\beta$, and for $i_0<i_1<\mu$, $j_0<j_1<\mu$, $\gtp(a_{i_0}a_{i_1}/\emptyset,M)\neq\gtp(a_{j_1}a_{j_0}/\emptyset,M)$.
\item $K$ has the \emph{$(<\alpha)$-order property of length $\mu$} if there is a $\beta<\alpha$ witnessing (1).
\item $K$ has the \emph{$(<\alpha)$-order property} if for all $\mu$, $K$ has the $(<\alpha)$-order property of length $\mu$. In other words, if we fix $\mu$, we can find a suitable $\beta_\mu$ witnessing (1).
\item $K$ has the \emph{no $(<\alpha)$-order property} if (3) fails. In other words, for each $\beta<\alpha$, there is an upper bound to the length of the $\beta$-order property. We omit $(<\alpha)$ if $\alpha=\omega$.
\end{enumerate}
\end{definition}
The above definition works fine if one wants an abstract generalization of the order property from the first-order version, in which case the length can be fixed at $\omega$. However, in AECs, it is hard to construct long well-ordered sets without breaking stability or raising $\ls$. We propose the following definition instead:
\begin{definition}\mylabel{fsdef2}{Definition \thetheorem}
In \ref{fsdef}, we replace all occurences of ``order property'' by ``order property*'' if we also allow sequences indexed by linear orders instead of well-orderings. For example in (1), we say $K$ has the \emph{$\beta$-order property* of length $\mu$} if there exist some linear order $I$, some $\langle a_i:i\in I\rangle\subseteq M\in K$ such that $|I|=\mu$, $l(a_i)=\beta$, and for $i_0<i_1$ in $I$, $j_0<j_1$ in $I$, $\gtp(a_{i_0}a_{i_1}/\emptyset,M)\neq\gtp(a_{j_1}a_{j_0}/\emptyset,M)$. When $\mu$ is omitted, we mean for all $\mu$, there is a linear order $I$ of cardinality $\mu$ witnessing the $\beta$-order property* of length $\mu$.
\end{definition} 

In the following proposition, item (1) applies Morley's method \cite{mor} (see also \cite[Theorem A.3(2)]{bal}). The statement we use is from \cite[Claim 4.6]{sh394} which only hinted at the proof of the Hanf number for arbitrarily large models \cite[VII.5]{sh90}. We add more details and explain how to adapt that proof. The proof of item (3) adapts the proof from \cite[Fact 5.13]{BGKV}.
\begin{proposition}\mylabel{fsprop2}{Proposition \thetheorem}
Let ${\bf K}$ be an $AEC$, $\beta\geq2$ be an ordinal.
\begin{enumerate}
\item If for all $\mu<\beth_{(2^{<(\ls^++|\beta|)})^+}$, ${\bf K}$ has the $(<\beta)$-order property of length $\mu$, then ${\bf K}$ has the $(<\beta)$-order property (and the $(<\beta)$-order property*).
\item If for all $\mu<\beth_{(2^{<(\ls^++|\beta|)})^+}$, ${\bf K}$, ${\bf K}$ has the $(<\beta)$-order property* of length $\mu$, then ${\bf K}$ has the $(<\beta)$-order property* (and $(<\beta)$-order property).
\item If ${\bf K}$ is $(<\beta)$-stable (in some $\lambda\geq\ls+|\beta|$), then there is $\mu<\beth_{(2^{<(\ls^++|\beta|)})^+}$ such that ${\bf K}$ has no $(<\beta)$-order property* (and thus no $(<\beta)$-order property) of length $\mu$. 
\end{enumerate}
\end{proposition}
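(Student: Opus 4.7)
My plan is to prove (1) and (2) together by a single Hanf-number argument — any $\beta$-order property sequence is a $\beta$-order property* sequence indexed by a well-order, so the same construction handles both — and then to reduce (3) to (2) via the contrapositive and the classical bound in \cite[Fact 5.13]{BGKV}.

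For (2) (which subsumes (1)), assume $\bf K$ has $(<\beta)$-order property* of length $\mu$ for every $\mu<\beth_{(2^{<(\ls^++|\beta|)})^+}$. Because that Hanf number has cofinality $(2^{<(\ls^++|\beta|)})^+>|\beta|$, a cofinality-pigeonhole first yields a single $\beta^*<\beta$ such that $\beta^*$-order property* of length $\mu$ holds cofinally below the Hanf number; from now on I would work with that $\beta^*$. Apply the AEC presentation theorem to realize $\bf K$ as $EC(T_0,\Gamma_0)$ with $|L(T_0)|\leq\ls$ and $|\Gamma_0|\leq 2^{\ls}$. Expand the language with $\beta^*$ fresh unary function symbols so elements of a new unary predicate $P$ code $\beta^*$-tuples, plus a binary $<$ on $P$; in the expanded theory $T\supseteq T_0$ axiomatize that $<$ linearly orders $P$, and adjoin type omissions forcing every $<$-increasing pair from $P$ to realize a fixed ``increasing'' Galois type while no $<$-increasing pair realizes the ``decreasing'' one. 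The resulting $T,\Gamma$ satisfy $|L(T)|,|\Gamma|\leq 2^{<(\ls^++|\beta|)}$, so the definition of $\delta$ together with the classical bound $\delta(\lambda,\kappa)\leq\beth_{(\lambda+\kappa)^+}$ produces a model $N\in EC(T,\Gamma)$ whose $(P^N,<^N)$ is linear but not well-ordered.

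At this point I would invoke the AEC-adapted Ehrenfeucht--Mostowski extraction on $N$ (Morley's method): from $N$ produce an EM-blueprint $\Phi$ such that for every linear order $I$ there is $N_I\in EC(T,\Gamma)$ carrying an $I$-indexed sequence in $P$ maintaining the increasing/decreasing type pattern. Instantiating $I$ as any ordinal $\mu$ yields $\beta^*$-order property of length $\mu$, and instantiating $I$ as any linear order of size $\mu$ yields $\beta^*$-order property* of length $\mu$, proving (2). For (3), if $(<\beta)$-order property* of length $\mu$ held at every $\mu<\beth_{(2^{<(\ls^++|\beta|)})^+}$, then (2) would promote it to all lengths, contradicting $(<\beta)$-stability by \cite[Fact 5.13]{BGKV}.

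The main obstacle I expect is the EM-extraction step: one must check that Morley's blueprint really preserves the omitted types in $\Gamma$, and that the syntactic increasing/decreasing distinction in the expanded language genuinely translates back, upon realization, to distinct Galois types in $\bf K$ rather than to merely distinct formulas. The pigeonhole reduction and the coding into $EC(T,\Gamma)$ are routine once the bookkeeping of tuple-coding functions and the two omitted Galois types is set up, and the Hanf bound on $\delta$ is the standard Morley computation; the delicate piece is threading both type-omissions through the EM template without one collapsing into the other in the stretched models $N_I$.
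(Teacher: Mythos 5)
Your skeleton (presentation theorem, a Morley-style Hanf-number argument, EM models over arbitrary linear orders) is the same family of argument as the paper's, but the central step of your construction has a genuine gap: you cannot ``adjoin type omissions forcing every $<$-increasing pair from $P$ to realize a fixed increasing Galois type while no increasing pair realizes the decreasing one.'' Galois types are not sets of formulas, so neither condition is expressible by omitting first-order types in the expanded language; moreover, \ref{fsdef}(1) does not even require all increasing pairs of a witness to have the \emph{same} Galois type, so the witnesses you are handed need not expand to models of your $T,\Gamma$. The only bridge available is the one the presentation theorem supplies: if two tuples generate $L(T_0)$-isomorphic substructures, those reduce to isomorphic $\lk$-substructures and hence the tuples have equal Galois types. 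Exploiting this forces you to work with quantifier-free $L(T_0)$-types of the actual witnesses, and then you hit a uniformity problem your sketch does not address: the sets of quantifier-free types realized by increasing versus decreasing pairs vary with the length $\mu$ of the witness, there are up to $2^{\ls+|\beta|}$ candidates, and this need not be below the cofinality $(2^{<(\ls^++|\beta|)})^+$ of the Hanf index once $|\beta|>\ls$, so no pigeonhole fixes a single pair $(p,q)$ to write into $T,\Gamma$ in advance. Relatedly, first producing one model $N$ with $(P^N,<^N)$ non-well-ordered via $\delta(\lambda,\kappa)$ and only then extracting a blueprint from $N$ is the wrong order of operations: the guarantee that the blueprint's finite types are realized by genuine order-property witnesses comes from building the template out of the witnesses themselves, not out of $N$.

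The paper's proof dissolves all of this by running the Erd\H{o}s--Rado construction of \cite[VII Theorem 5.3]{sh90} directly on the family of witnesses: the functions $F^{\mathfrak{B}}(\alpha)$ are required to enumerate a length-$\beth_\alpha$ witness, and the extracted indiscernible sequence $\langle a_i:i<\omega\rangle$ has each finite increasing (resp.\ decreasing) subtuple realizing the same first-order type as a genuine increasing (resp.\ decreasing) witness tuple $d_1\dots d_n$; the induced $L$-isomorphism $EM(a_{i_1}\dots a_{i_n})\cong EM(d_1\dots d_n)$ restricts to $\llk$ and transfers the Galois-type inequality, with no pair of syntactic types fixed in advance, and stretching the template over any linear order gives (1) and (2) simultaneously. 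For (3), your contrapositive amounts to citing \cite[Fact 5.13]{BGKV}, which is the statement being proved; the paper instead derives instability directly from the order property* by indexing the stretched indiscernibles by a linear order $I_\lambda$ with a dense suborder $J_\lambda$ of size $\lambda$, so that any $i<i'$ are separated by some $j\in J_\lambda$ and $\gtp(a_i/a_j)\neq\gtp(a_{i'}/a_j)$, yielding more than $\lambda$ types over a set of size $\lambda$. You should supply that argument rather than cite the fact you are proving.
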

\begin{proof}[Proof sketch]
\begin{enumerate}
\item We adapt the usual Hanf number argument. Suppose ${\bf K}$ has the $(<\beta)$-order property and we fix $\gamma<\beta$ such that ${\bf K}$ has the $\gamma$-order property. By Shelah's Presentation Theorem, we may write $K=PC(T,\Gamma_1,\llk)$ for some first-order theory $T$ in $L\supseteq\llk$ and some sets of $L$-types $\Gamma_1$. Now we refer to the construction of \cite[VII Theorem 5.3]{sh90} or \cite[Chapter 2 Theorem 6.35]{Gbook}. For each $\alpha<(2^{\ls+|\gamma|})^+$, instead of defining $F^{\mathfrak{B}}(\alpha)$ to be some $M\in K$ of size $\beth_\alpha$, we demand it to be the witness of the $\gamma$-order property of length $\beth_\alpha$ (we can also add another function $F_1^{\mathfrak{B}}(\alpha,\cdot)$ to enumerate the elements of $F^{\mathfrak{B}}(\alpha)$). At the end of the construction (which uses Erd\H{o}s-Rado Theorem
), we obtain an $L$-indiscernible sequence (of $\gamma$-tuples) $\langle a_i:i<\omega\rangle$ such that for $n<\omega$, $i_1<i_2<\cdots<i_n<\omega$, the first-order type of $a_{i_1}\dots a_{i_n}$ is realized by some $d_1\dots d_n$ that witness the order property. This induces an $L$-isomorphism between $EM(a_{i_1}\dots a_{i_n})\cong EM(d_1\dots d_n)$. Its reduct to $\llk$ is also an isormophism between $EM(a_{i_1}\dots a_{i_n})\restriction \llk\cong EM(d_1\dots d_n)\restriction \llk$. Since the right-hand-side witnesses the order property in $K$, so is the left-hand-side. The same argument applies when the indiscernible sequence is stretched to arbitrary length (or any linear order).
\item The same proof of (1) goes through because Erd\H{o}s-Rado Theorem 
applies to linear orders (actually any sets) besides well-orderings. 
\item Otherwise by (1)(2), ${\bf K}$ has the $(<\beta)$-order property*. For any infinite cardinal $\lambda\geq\ls+|\beta|$, let $I_\lambda$ be a linear order of size $>\lambda$ such that it has a dense suborder $J_\lambda$ of size $\lambda$. We stretch the indiscernible $\langle a_i:i<\omega\rangle$ in (2) to be indexed by $I_\lambda$. Pick $i<i'$ in $I_\lambda$, we can find $k$ in $J_\lambda$ such that $i<j<i'$. Then $\gtp(a_ia_j/\emptyset)\neq\gtp(a_{i'}a_j/\emptyset)$ are distinct by the order property, which means $\gtp(a_i/a_j)\neq\gtp(a_{i'}/a_j)$. This shows that ${\bf K}$ is $(<\beta)$-unstable in $\lambda$. As $\lambda$ is arbitrary, ${\bf K}$ is $(<\beta)$-unstable above $\ls+|\beta|$.
\end{enumerate}
\end{proof}
\begin{remark}
\begin{itemize}
\item If we have a specific $K$ in mind, we may replace $(2^{\ls})^+$ by $\delta(\ls,\kappa)$ where $\kappa\defeq\card{\Gamma_1}\leq2^{\ls}$ in Shelah's Presentation Theorem. 
\item We used Galois types over the empty set in proving (2). The same proof goes through if we require the domain of the types to be some fixed (nonempty) model.
\end{itemize}
\end{remark}
\section{Lower bound for stability and no order property*}
From \ref{fsprop2}, we saw that a stable AEC cannot have the order property* of length $\mu$ for some $\mu<\han$. Our goal is to show that this is also a lower bound for no order property* as well as stability. 
\begin{proposition}\mylabel{fsprop}{Proposition \thetheorem}
Let $\lambda$ be an infinite cardinal and $\alpha$ be an ordinal with $\lambda\leq\alpha<(2^\lambda)^+$. Then there is a stable AEC ${\bf K}$ such that $\ls=\lambda$, ${\bf K}$ has the order property* of length up to $\beth_{\alpha}(\lambda)$ and is unstable anywhere below $\beth_\alpha(\lambda)$. Moreover, ${\bf K}$ has $JEP$, $NMM$ and $(<\al)$-tameness but not $AP$. 
\end{proposition}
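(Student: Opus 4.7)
The plan is to construct an AEC ${\bf K}$ whose models consist of a linearly ordered \emph{spine} $(U,<)$ whose length can be any prescribed $\mu<\beth_\alpha(\lambda)$ but is globally capped at $\beth_\alpha(\lambda)$, together with auxiliary structure used both to pin $\ls$ at $\lambda$ and to collapse Galois types once the cap is reached. Concretely, I would work in a vocabulary $L$ of size $\lambda$ containing $\lambda$ distinct constants $(c_i)_{i<\lambda}$ (to force $\ls\geq\lambda$), a binary relation $<$, a unary predicate $U$, and rank predicates $R_\beta$ for $\beta<\alpha$ stratifying $U$ into layers. Present ${\bf K}$ as an $EC(T,\Gamma)$ class with $|L(T)|=\lambda$, where $T$ axiomatizes that $<$ linearly orders $U$ and the rank stratification respects $<$, and $\Gamma$ is an omitting-types scheme that caps each rank-$\beta$ layer at cardinality $\beth_\beta(\lambda)$. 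Taking $\leq_{\bf K}$ to be $L$-substructure gives an AEC with $\ls=\lambda$.

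The structural properties should fall out of the construction: $JEP$ by end-to-end concatenation of spines with rank data merged level by level; $NMM$ because any model can be enlarged inside its rank layers (while below the cap) or by adjoining new ambient elements; $(<\al)$-tameness because Galois $2$-types are determined by finite quantifier-free information about $<$, the $R_\beta$, and distance to the named constants. $AP$ fails via the usual linear-order obstruction: two extensions inserting a fresh element on incompatible sides of an existing spine element cannot be amalgamated while preserving $<$.

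For order property* of any length $\mu<\beth_\alpha(\lambda)$, pick $\beta<\alpha$ with $\mu\leq\beth_\beta(\lambda)$ and embed an arbitrary linear order $I$ of size $\mu$ into the rank-$\beta$ layer of some $M\in K$. Its enumeration $\langle a_i:i\in I\rangle$ witnesses the $1$-order property* since for $i<^Ij$ the Galois types $\gtp(a_ia_j/\emptyset,M)$ and $\gtp(a_ja_i/\emptyset,M)$ are distinguished by $<$. Instability at each $\mu<\beth_\alpha(\lambda)$ follows from the same witnesses: a model of size $\mu$ admits more than $\mu$ insertion positions in $(U,<)$, producing more than $\mu$ distinct Galois $1$-types. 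Global stability past $\beth_\alpha(\lambda)$ holds because once $|U^M|$ saturates the cap, Galois $1$-types over $M$ are determined by position in $(U^M,<^M)$ together with finite combinatorial data, yielding at most $\nr{M}$ many.

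The main obstacle is calibrating the omitting-types scheme so that each rank-$\beta$ layer is capped at exactly $\beth_\beta(\lambda)$ within a language of size only $\lambda$ --- tight enough for global stability yet loose enough to accommodate linear orders of every $\mu<\beth_\alpha(\lambda)$ in some spine. One natural route is to index the omitted types by inner configurations coding well-founded rank, so that Erd\H{o}s--Rado growth is attained layer by layer. A secondary subtlety is verifying at the AEC level (not just inside the first-order presentation) that $AP$ genuinely fails, which by \ref{fsprop2} and Vasey's upper bound under $AP$ is essential for the lower-bound conclusion here to be consistent.
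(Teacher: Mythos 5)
Your overall architecture (a rank-stratified structure of height $\alpha$ carrying a linear order, with layer $\beta$ of size $\beth_\beta(\lambda)$ witnessing both the order property* and instability level by level, plus a failure of $AP$) matches the shape of the paper's construction, but the two points you flag as ``obstacles'' are exactly the two points where the proof actually lives, and your sketch does not resolve either of them. First, you cannot have rank predicates $R_\beta$ for each $\beta<\alpha$ in a language of size $\lambda$: the ordinal $\alpha$ may have cardinality up to $2^\lambda$. The paper's solution is \ref{fslem}: code each ordinal $\beta<\alpha$ by a distinct subset $S_\beta\subseteq\lambda$ using $\lambda$-many unary predicates $P_i$, and use $2^\lambda$-many omitted types (the $p_{\beta,\gamma}$ and $p_S$) to force every element's $P_i$-pattern to be one of the $S_\beta$ and to force the linear order to agree with the enumeration of the $S_\beta$. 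This is where the hypothesis $\alpha<(2^\lambda)^+$ enters, and without some such device your presentation is not an $EC(T,\Gamma)$ class with $|L(T)|=\lambda$.

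Second, and more seriously, a bare cardinality cap on each layer does not give stability at or above $\beth_\alpha(\lambda)$. If the spine is just a linear order whose layers are capped in cardinality, then Galois $1$-types over a model $M$ correspond (at least) to cuts of $(U^M,<^M)$ that are fillable in extensions, and a linear order of size $\beth_\alpha(\lambda)$ can have $2^{\beth_\alpha(\lambda)}$ cuts; your phrase ``determined by position \ldots{} yielding at most $\nr{M}$ many'' is precisely what fails. The paper avoids this by making the hierarchy \emph{rigid}: elements of rank $\beta+1$ are extensional sets of lower-rank elements, so the maximal model of the ordered part is literally (a copy of) $V_\alpha(\alpha)$, unique up to isomorphism, and stability above $\beth_\alpha(\lambda)$ is then trivial because everything embeds into this one model of size $\beth_\alpha(\lambda)$. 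The linear order witnessing order property* is not free but is the induced order (by rank, then by the least element of the symmetric difference, coded by $R,g,c$), which is why it is ill-founded and yields order property* rather than order property --- and the same membership relation gives both the instability on $[\beth_{1+\beta}(\lambda),\beth_{1+\beta+1}(\lambda))$ (rank-$\beta$ elements separate rank-$(\beta+1)$ elements) and the failure of $AP$ (a substructure missing a member $y$ of $x$ embeds into $V_\alpha(\alpha)$ sending $x$ to $x\setminus\{y\}$, and the two images cannot be amalgamated against extensionality). Your cut-insertion argument for both instability and the failure of $AP$ would need the order to be suitably dense, which is in tension with the rigidity needed for stability at the top; the extensionality trick dissolves that tension and is the missing idea.
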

The proof of \ref{fsprop} will come after \ref{fscor3} and use \ref{fslem} below.
\begin{lemma}\emph{\cite[VII Theorem 5.5(6)]{sh90}}\mylabel{fslem}{Lemma \thetheorem}
Let $\lambda$ be an infinite cardinal and $\alpha$ be an ordinal with $\lambda\leq\alpha<(2^\lambda)^+$. There is a stable AEC ${\bf K_1}$ such that $\lss=\lambda$ and for all $M\in { K_1}$, $M$ is well-ordered of order type at most $\alpha$. Moreover, ${\bf K_1}$ has $AP$, $JEP$ and is $(<\al)$-tame. 
\end{lemma}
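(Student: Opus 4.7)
My plan is to invoke \cite[VII Theorem 5.5(6)]{sh90} and outline how its construction is adapted to an AEC. I would fix a reference well-ordering $W$ of order type $\alpha$ and take the models of ${\bf K_1}$ to be well-orderings of order type at most $\alpha$ and cardinality at least $\lambda$, each equipped with a distinguished order-embedding into $W$. Under this embedding every model is identified with a subset of $W$, and the strong substructure relation $\leq_{{\bf K_1}}$ becomes set-theoretic inclusion of subsets of $W$. The signature will include $<$, $\lambda$ constants naming the first $\lambda$ elements, and additional coding that records the $W$-image of each element.

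With this setup, the AEC axioms and $\lss=\lambda$ are routine. Every model has cardinality at least $\lambda$ because it contains the named constants, and given $A\subseteq|M|$ with $\card{A}\leq\lambda$ one obtains a sub-well-ordering $M'\leq_{{\bf K_1}} M$ of cardinality $\lambda$ by adjoining the constants to $A$ and padding with arbitrary elements of $M$. All models are well-orderings of order type at most $\alpha$ by design.

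For $AP$ and $JEP$, given $M\leq_{{\bf K_1}} N_1,N_2$ identified with subsets of $W$, I would take the amalgam to be $N_1\cup N_2\subseteq W$, which has cardinality at least $\lambda$ and order type at most $\alpha$, hence lies in ${\bf K_1}$; $JEP$ follows analogously. Stability and $(<\al)$-tameness both flow from the rigidity of the identification: a Galois type of a new element $a$ over $M$ is determined by the $W$-image of $a$, because any $\leq_{{\bf K_1}}$-embedding preserves the designated map into $W$. Hence $\card{\gs(M)}\leq\card{W}\leq 2^\lambda$ and ${\bf K_1}$ is stable in every $\mu\geq 2^\lambda$, while $(<\al)$-tameness is trivial because a single parameter already distinguishes any two distinct types.

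The main obstacle is to keep $\lss=\lambda$ despite the external parameter $W$ having cardinality up to $2^\lambda$: one cannot simply include $W$ as part of each model's universe. The standard resolution, as in \cite[VII Theorem 5.5(6)]{sh90}, is to present ${\bf K_1}$ as a $PC$-class (or via a suitable fragment of $L_{(2^\lambda)^+,\omega}$) so that the data identifying each model with a subset of $W$ is carried by a signature of cardinality $\lambda$; this keeps $\lss=\lambda$ while preserving the subset-of-$W$ viewpoint used for amalgamation and stability.
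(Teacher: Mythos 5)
You have the right architecture---well-orders of type at most $\alpha$, a canonical identification of every model with a subset of a fixed maximal order, $AP$, $JEP$ and stability read off from that maximal model---but the one step you defer, the ``additional coding that records the $W$-image of each element \dots carried by a signature of cardinality $\lambda$,'' is the entire content of the lemma, and the workaround you gesture at does not deliver it. A ``distinguished order-embedding into $W$'' is external data, not part of a structure in a fixed $\lambda$-sized signature, so as written the relation $\leq_{\bf K_1}$, the value of $\lss$, tameness, and the type count are all conditional on a coding you never exhibit. Presenting the class via a fragment of $L_{(2^\lambda)^+,\omega}$ points in the wrong direction: the relevant fragment, and hence the L\"{o}wenheim--Skolem number it yields, would be of size up to $2^\lambda$, not $\lambda$. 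Tellingly, your sketch uses the hypothesis $\alpha<(2^\lambda)^+$ only to bound $|W|$ in the stability count, whereas it is needed precisely to make the coding possible.

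The paper's resolution is concrete: take $L_1=\{<\}\cup\{P_i:i<\lambda\}$ with the $P_i$ unary, and for each $\beta<\alpha$ choose a distinct $S_\beta\subseteq\lambda$ (possible exactly because $|\alpha|\leq 2^\lambda$). The ``$W$-image'' of an element is its pattern $\{i<\lambda:P_i(x)\}$, and one omits a family $\Gamma_1$ of quantifier-free types forcing that (i) every realized pattern is some $S_\beta$, (ii) distinct elements have distinct patterns, and (iii) $x<y$ whenever their patterns are $S_\beta,S_\gamma$ with $\beta<\gamma$. Because the omitted types are quantifier-free, omission passes to substructures, so $EC(T_1,\Gamma_1)$ ordered by substructure is an AEC with $\lss=|L_1|=\lambda$ even though $|\Gamma_1|=2^\lambda$; the maximal model is $\alpha$ itself, which then supports your $AP$, $JEP$ and stability arguments, and $(<\al)$-tameness holds because two elements of distinct Galois type already differ on a single $P_i$. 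Without this (or an equivalent) device, your proposal is a plan rather than a proof.
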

\begin{proof}
Let $L_1\defeq\{<,P_i:i<\lambda\}$ where $P_i$ are unary predicates. $T_1$ requires $<$ to be a linear order. For $\beta<\alpha$, pick distinct subsets $S_\beta\subseteq\lambda$. Given an element $x$, we define its \emph{type} by the set of indices $i$ such that $P_i(x)$. We require that each element is characterized by its type and the only possible types are among $\{S_\beta:\beta<\alpha\}$. If $\beta<\alpha$ and $x,y$ have types $S_\beta$ and $S_\alpha$ respectively, then we stipulate that $x<y$. These will be coded in the collection of types $\Gamma_1$.

More precisely, for $\beta<\gamma<\alpha$, $S\subseteq\lambda$,
\begin{align*}
p_{\beta,\gamma}(x,y)\defeq&\{\neg (x<y)\}\cup\{P_i(x):i\in S_\beta\}\cup\{\neg P_i(x):i\not\in S_\beta\}\\&\cup\{P_i(y):i\in S_\gamma\}\cup\{\neg P_i(y):i\not\in S_\gamma\}\\
p_S(x)\defeq&\{P_i(x):i\in S\}\cup\{\neg P_i(x):i\not\in S\}\\
\Gamma_1\defeq&\{p_{\beta,\gamma}(x,y):\beta<\gamma<\alpha\}\cup\{p_{S'}:S'\subseteq\lambda\text{ such that for $\beta<\alpha$, }S'\neq S_\beta\}\\&\cup\{x\neq y\wedge P_i(x)\leftrightarrow P_i(y):i<\lambda\}
\end{align*}

Let ${\bf K_1}\defeq EC(T_1,\Gamma_1)$ ordered by substructures. Notice that $|L(T_1)|=\lambda$ and $\Gamma_1=2^\lambda$. By replacing ${\bf K_1}$ by $({\bf K_1})_{\geq\lambda}$, we may assume $\lss=\lambda$. Then $M_1=\alpha$ is the maximal model (every model can be extended to an isomorphic copy of it) where for $\beta\in M_1$, $i<\lambda$, $P_i^{M_1}(\beta)$ iff $i\in S_\beta$. Hence ${\bf K_1}$ satisfies $AP$, $JEP$ (but not $NMM$). As ${\bf K_1}$ has the maximal model of size $|\alpha|\leq2^\lambda$, ${\bf K_1}$ is trivially stable in $\geq(2^\lambda)^+$. It is $(<\al)$-tame because types are decided by the $P_i$'s they belong to.

\end{proof}
\begin{remark}\mylabel{fsrmk}{Remark \thetheorem}
By \cite[VII Theorem 5.5(2)]{sh90}, for any $\lambda$ and $\kappa\leq2^\lambda$, $\delta(\lambda,\kappa)\leq(2^\lambda)^+$, so the threshold $(2^\lambda)^+$ cannot be improved. If we restrict $1\leq\kappa<2^\lambda$, then for $\alpha<\lambda^+$, we can still define ${\bf K_1}$ to be well-orderings of type at most $\alpha$, where models are ordered by initial segments. Then we get a lower bound $\delta(\lambda,\kappa)\geq\lambda^+$. But the above proof does not go through because it requires $|\Gamma|=2^\lambda$.
\end{remark}

Using \ref{fslem}, we are able to answer Boney's conjecture in \ref{bonresrmk}. We will complete the proof of \ref{fsprop} after the proof of \ref{fscor3}.

\begin{corollary}\mylabel{fscor3}{Corollary \thetheorem}
Under GCH, the $\lambda$-$JEP$ assumption in \ref{bonlem2.5} and \ref{bonlem2} is not necessary. If $2^\al=\aleph_{\omega_1}$ and $\lambda=\al$, then $\lambda$-$JEP$ is necessary in \ref{bonlem2.5}. If $2^\al=\aleph_1\eqdef\lambda$ and $2^{\aleph_1}=\aleph_{\omega_2}$, then $\lambda$-$JEP$ is necessary in \ref{bonlem2}.
\end{corollary}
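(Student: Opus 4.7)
The plan splits $\ref{fscor3}$ into its three claims. For the GCH claim I would argue by a case analysis on $\cf(\gs_\lambda^1)$ together with cardinal arithmetic; for the two consistency counterexamples I would take a disjoint union (with pairwise disjoint vocabularies) of the AECs from $\ref{fslem}$ to break $\lambda$-$JEP$.

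For the GCH case, if $\kappa<\cf(\gs_\lambda^1)$ then $(\gs_\lambda^1)^\kappa=\gs_\lambda^1$ under GCH, and the trivial bound $|\gs^\kappa(M)|\geq|\gs^1(M)|$ (via constant $\kappa$-tuples) gives $\gs_\lambda^\kappa\geq\gs_\lambda^1=(\gs_\lambda^1)^\kappa$. Otherwise $(\gs_\lambda^1)^\kappa=(\gs_\lambda^1)^+$ under GCH; here I would argue that GCH forces $\gs_\lambda^1$ to be attained by some single $M$ of size $\lambda$, whereupon the bound $|\gs^\kappa(M)|\geq|\gs^1(M)|^\kappa$ from the proof of $\ref{bonlem2}$ yields $\gs_\lambda^\kappa\geq(\gs_\lambda^1)^+=(\gs_\lambda^1)^\kappa$. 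The analogous dichotomy, with $\cf(\gs_\lambda^\kappa)\leq\lambda$ in the place of the cofinality condition, handles $\ref{bonlem2.5}$.

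For both counterexamples, let ${\bf K}$ be the disjoint union (with pairwise disjoint vocabularies) of $\omega$-many copies of the AEC ${\bf K_1}$ from $\ref{fslem}$, one copy for each parameter $\alpha=\omega_n$ with $1\leq n<\omega$. The requirement $\omega_n<(2^\lambda)^+$ is met under each set-theoretic hypothesis, so $\ref{fslem}$ applies, and the disjoint vocabularies make ${\bf K}$ fail $\lambda$-$JEP$. By the $P_i$-characterization in $\ref{fslem}$, distinct elements of the maximal model of the $n$th copy give distinct Galois types, so $|\gs^\kappa(M)|=\aleph_n^\kappa$ for any $M$ in the $n$th copy. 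For Part 2 ($\lambda=\al$, $2^\al=\aleph_{\omega_1}$, $\kappa=1$) this gives $\gs^1_\al({\bf K})=\sup_n\aleph_n=\aleph_\omega$ with $\cf(\aleph_\omega)=\omega\leq\al$; however, no single countable $M$ in any of the copies reaches $\aleph_\omega$ many $1$-types, violating $\ref{bonlem2.5}$. For Part 3 ($\lambda=\aleph_1$, $2^\al=\aleph_1$, $2^{\aleph_1}=\aleph_{\omega_2}$, $\kappa=\al$), cardinal arithmetic under $2^\al=\aleph_1$ gives $\aleph_n^\al=\aleph_n$ for $n\geq 1$, so $\gs^\al_{\aleph_1}({\bf K})=\gs^1_{\aleph_1}({\bf K})=\aleph_\omega$, while $(\gs^1_{\aleph_1})^\al=\aleph_\omega^\al>\aleph_\omega$ by K\"onig's theorem, violating $\ref{bonlem2}$.

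The principal obstacle is the GCH claim---specifically the sub-claim that under GCH the supremum $\gs_\lambda^1$ (resp. $\gs_\lambda^\kappa$) is attained by some single $M$ of size $\lambda$ whenever the cofinality hypothesis of the relevant lemma holds; the two counterexamples are routine once the disjoint-union construction is set up.
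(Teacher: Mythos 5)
Your two counterexamples are essentially the paper's own construction: the paper likewise takes the disjoint union of the classes $EC(T_1,\Gamma_1^n)$ from \ref{fslem} with $\alpha=\aleph_n$ (implemented by adding $\al$-many predicates and stipulating that no model meets two of them --- the same device as your ``pairwise disjoint vocabularies'', except that you should make explicit the requirement that each model live entirely in a single copy, since otherwise $\lambda$-$AP$ could fail and the example would no longer isolate $JEP$), and your computations $\gs^1_{\al}=\aleph_\omega$ unattained with $\cf(\aleph_\omega)=\omega\leq\al$, resp.\ $\gs^{\al}_{\aleph_1}=\aleph_\omega<\aleph_\omega^{\al}=(\gs^1_{\aleph_1})^{\al}$ via Hausdorff and K\"onig, are exactly the paper's. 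For the GCH claim you dichotomize on $\cf(\gs^1_\lambda)$ versus $\kappa$, whereas the paper dichotomizes on whether ${\bf K}$ is $\kappa$-stable in $\lambda$; the routes are comparable, but the paper's choice dissolves the obstacle you flag. Indeed, the attainment sub-claim holds unconditionally under GCH and follows from two observations already in your toolkit: algebraic $\kappa$-tuples from $|M|$ give $|\gs^\kappa(M)|\geq\lambda^\kappa\geq\lambda$ for every $M$ with $\nr{M}=\lambda$, while $\gs^\kappa_\lambda\leq2^{\lambda+\kappa}=2^\lambda=\lambda^+$; hence $\gs^\kappa_\lambda$ equals either $\lambda$ (attained by every $M$) or $\lambda^+$ (attained by some $M$, since a supremum of cardinals all $\leq\lambda$ cannot reach the regular cardinal $\lambda^+$). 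The same two lines give $(\gs^1_\lambda)^\kappa\leq(2^\lambda)^\kappa=\lambda^+\leq\gs^\kappa_\lambda$ in the unstable case and $(\gs^1_\lambda)^\kappa\leq\lambda^\kappa=\lambda=\gs^\kappa_\lambda$ in the $\kappa$-stable case, so your cofinality case analysis (together with the side conditions such as $\kappa\leq\gs^1_\lambda$ that your GCH exponent computations tacitly require) can be bypassed entirely. With the attainment step filled in this way, your proposal is correct and, for the independence half, identical in substance to the paper's proof.
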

\begin{proof}
By \ref{bonlem1}, we may assume $\kappa<\lambda$. Suppose ${\bf K}$ is $\kappa$-stable in $\lambda$, then \ref{bonlem2.5} and \ref{bonlem2} are always true: for \emph{all} $\nr{M}=\lambda$, $\gs^\kappa_\lambda=\lambda=|\gs^\kappa(M)|$. Also, by taking sequences of length $\kappa$ from $|M|$ (which give the algebraic types), we have $\gs^\kappa_\lambda\geq|\gs^\kappa_\lambda(M)|\geq\lambda^\kappa\geq(\gs^1_\lambda)^\kappa$ where the last inequality is by stability.

Therefore, we may further assume $\kappa$-instability in $\lambda$, witnessed by $M$. Suppose GCH holds, then $|\gs^\kappa(M)|\geq\lambda^+=2^\lambda=2^{\lambda+\kappa}\geq\gs^\kappa_\lambda\geq|\gs^\kappa(M)|$. Also, $\gs^\kappa_\lambda\geq|\gs^\kappa(M)|\geq\lambda^+=2^\lambda=(2^\lambda)^\kappa\geq(\gs^1_\lambda)^\kappa$. Hence it is consistent that the lemmas are always true (regardless of $\lambda$-$JEP$ or $\cf(\gs_\lambda^\kappa)$).

We show that \ref{bonlem2.5} can be false: let $\lambda=\al$, $\kappa=1$ and suppose $2^\al=\aleph_{\omega_1}$ (which is consistent by Easton Theorem, see \cite[Theorem 15.18]{jech}). Define $L_1,T_1$ as in \ref{fslem}, pick $\aleph_\omega$ many distinct subsets of $\al$, say $\{S_\beta:\beta<\aleph_\omega\}$. We allow $\beta<\gamma<\aleph_{\omega}$ when constructing $p_{\beta,\gamma}$. Now for $n<\omega$, define $\Gamma_1^n$ to be the same as $\Gamma_1$ except that $\alpha$ is replaced by $\aleph_n$. Define ${\bf K^n}\defeq EC(T_1,\Gamma_1^n)$ and ${\bf K}$ be the disjoint union of all ${\bf K^n}$ (adding $\al$-many predicates and stipulate that no two elements belong to different predicates --- this destroys $\al$-$JEP$). Notice that ${\bf K}$ is still a $EC$ class where the language has size $\al$ and whose models omit $2^\al$-many types. For any $M\in K$, $M\in K^n$ for some $n<\omega$. Since the unique maximal model in ${\bf K^n}$ has size $\aleph_n$, $|\gs(M)|\leq\aleph_n<\aleph_\omega=\sup_{n<\omega}\aleph_n=\gs_{\al}^1$ where the last equality is due to the fact that distinct two elements in a model satisfy distinct subsets of $\{P_i:i<\al\}$. Hence $\gs_\al^1$ (and similarly all $\gs_{\aleph_n}^1$, $n<\omega$) is not attained by any model. $\cf(\gs_\al^1)=\al$ which satisfies the hypothesis. 

We show that \ref{bonlem2} can also be false: let $\lambda=\aleph_1$, $\kappa=\al$ and suppose $2^\al=\aleph_1$ and $2^{\aleph_1}=\aleph_{\omega_2}$ (which is consistent by Easton Theorem). This time we pick $\aleph_\omega$ many distinct subsets from $\aleph_1$ rather than from $\al$. Form ${\bf K}$ as above which is an $EC$ class whose language has size $\aleph_1$ and whose models omit $2^{\aleph_1}$-many types. Now $(\gs_\lambda^1)^\kappa=(\gs_{\aleph_1}^1)^\al={\aleph_\omega}^\al>\aleph_\omega$. On the other hand, $\gs_\lambda^\kappa=\gs_{\aleph_1}^\al\leq\sup_{n<\omega}\sup\{|\gs^\al(M)|:M\in (K^n)_{\aleph_1}\}\leq\sup_{n<\omega}{\aleph_n}^\al=\sup_{n<\omega}(2^\al\cdot\aleph_n)=\sup_{n<\omega}\aleph_1\cdot\aleph_n=\aleph_\omega<(\gs_\lambda^1)^\kappa$ where the second equality is a speical case of the Hausdorff formula \cite[Equation 5.22]{jech}. 
\end{proof}
\begin{proof}[Proof of \ref{fsprop}]
Fix $\alpha$ as in \ref{fslem}, we use ${\bf K_1}$ to build ${\bf K}$ as follows: let $L\defeq L_1\cup\{E,Q,R,f,g,c\}$ where $E,Q$ are binary predicates, $R$ is a ternary predicate, $f$ is a unary function, $g$ is a symmetric binary function and $c$ is a constant. Thus we have $\card{L}=\lambda$. A model in ${\bf K}=({\bf K_0},{\bf K_1},{\bf K_2})$ has three sorts $(M_0,M_1,M_2)$. $M_1$ is in $K_1$, $M_2$ will take care of $NMM$ while $M_0$ is the iterated power sets of $M_1$. In details, we require: 
\begin{enumerate}
\item $M_1\in EC(T_1,\Gamma_1)$ as in \ref{fslem}. We identify it as an ordinal $\leq \alpha$. 
\item $M_2$ is an infinite model of the theory of pure equality.
\item If $xEy$, then $x$ is in $M_0\cup M_1$ while $y$ is in $M_0$. $E$ also satisfies the extensionality axiom.
\item The first argument of $Q$ is in $M_1$. We write $Q_i(\cdot)\defeq Q(i,\cdot)$ and abbreviate $Q_i$ the elements $x$ in $M_0$ with $Q(i,x)$. For limit ordinal $\sigma$ in $M_1$, we require $Q_\sigma=\bigcup_{i<\sigma}Q_i$.
\item $f$ is the rank function from $M_0$ to $M_1$ such that each $x$ is sent to the smallest $i$ with $Q_i(x)$. If $x\in y$, then $f(x)<f(y)$.
\item $R$, $g$ and $c$ code the total order of $M_0$: we define $R(\beta,x,y)$ and $g(x,y)$ as follows:
\begin{enumerate}
\item $\beta$ in $M_1$, $x,y$ in $M_0$
\item If $f(x)\neq f(y)$, then we say $x$ is less than $y$ when $x$ has a smaller rank than $y$. 
\item If $f(x)=f(y)=0$, then $g(x,y)$ is the $<$-least element in the symmetric difference of $x,y$. $R(0,x,y)$ if $g(x,y)E y$, in which case we say $x$ is less than $y$.
\item If $f(x)=f(y)=\beta>0$, then $g(x,y)$ is the least element in the symmetric difference of $x,y$. $R(\beta,x,y)$ if $g(x,y)E y$, in which case we say $x$ is less than $y$.
\item $c$ is the default value for $g(x,y)$ when $x,y$ are in $M_1$ or $M_2$, or when $x,y$ in $M_0$ are equal or have different ranks.
\end{enumerate}
\end{enumerate}
(If we think of subsets as sequences, we are ordering $M_0$ by rank, and then by lexicographical order of each rank.)

We order $(M_0,M_1,M_2)\lk(N_0,N_1,N_2)$ iff for $i\leq2$, $M_i,N_i\in K_i$ and $M_i\subseteq N_i$. Notice that we can describe ${\bf K}$ as some $EC(T,\Gamma)$ where $T$ is a $\forall\exists$ theory, $\card{L(T)}=\lambda$ and $|\Gamma|=2^\lambda$. Also, $\ls=\lambda$ because $\lss=\lambda$ and we can close any set in $M_0$ to a model by adding witnesses for $f,g$ in (5),(6) $\omega$-many times. Therefore ${\bf K}$ is an AEC.

The maximal model in $({\bf K_0},{\bf K_1})$ is $M^*\defeq(V_\alpha(\alpha),\alpha)$, where for $\beta<\alpha$, $Q_\beta^{M^*}\defeq V_{1+\beta}(\alpha)$. $M^*$ witnesses that ${\bf K}$ have $JEP$. ${\bf K}$ is $(<\al)$-tame because elements are determined either by the predicates $P_i$; or their ranks and their own elements. With ${\bf K_2}$, we know that ${\bf K}$ has $NMM$. Since $M^*$ has size $\beth_\alpha(\lambda)$, $({\bf K_0},{\bf K_1})$ is trivially stable in $\geq\beth_\alpha(\lambda)$. As ${\bf K_2}$ is trivially stable everywhere, ${\bf K}$ is stable in $\geq\beth_\alpha(\lambda)$. We now show instability in $<\beth_\alpha(\lambda)$ and the order property of lengths up to $\beth_\alpha(\lambda)$.

For instability, $(\mathcal{P}(\lambda),\lambda,\omega)$ witnesses that ${\bf K}$ is unstable between $[\lambda,2^\lambda)$. Consider the maximal model $M^*$ above, for each $\beta<\alpha$, $\card{Q_\beta^{M^*}}=\beth_{1+\beta}(\lambda)$. By item (6) above, $Q_\beta^{M^*}$ can distinguish all elements in $Q_{\beta+1}^{M^*}$, so $({\bf K_0},{\bf K_1})$, and hence ${\bf K}$ is unstable in $\big[\beth_{1+\beta}(\lambda),\beth_{1+\beta+1}(\lambda)\big)$. Therefore, we have instability in $\big[\lambda,\beth_{\alpha}(\lambda)\big)$. 

For the order property*, we apply the order in (6) to $V_\alpha(\alpha)$ of the maximal model. In details: let $a$ less than $b$ while $c$ less than $d$. If $a,b$ is mapped to $d,c$ respectively, then $f(d)=f(a)\leq f(b)=f(c)\leq f(d)$. It cannot happen to rank 0 because their elements are well-ordered in $M_1$. Since $a$ is less than $b$, $g(a,b)\in b$. Then by mapping $g(d,c)(=g(c,d))\in c$ which shows that $d$ is less than $c$, contradiction. As $|V_\alpha(\alpha)|=\beth_\alpha(\lambda)$, we have the order property* of length $\beth_\alpha(\lambda)$.

$({\bf K_0},{\bf K_1})$ does not have $AP$: Pick an element $x$ from $(V_\alpha(\alpha),\alpha)$ which contains $\geq\lambda^+$ elements. Close $x$ to a substructure $N$ of size $\lambda$, then there is $yEx$ in $(V_\alpha(\alpha),\alpha)$ but $y$ is not in $N$. $N$ can be included in $(V_\alpha(\alpha),\alpha)$ such that $x$ is mapped to $(x-\{y\})$. Suppose the following amalgam exists:
\begin{center}
\begin{tikzcd}
{x\in(V_\alpha(\alpha),\alpha)} \arrow[r, dotted] & x\in W                                                       \\
x\in N \arrow[r] \arrow[u]                             & {x-\{y\}\in(V_\alpha(\alpha),\alpha)} \arrow[u, "t", dotted]
\end{tikzcd}
\end{center}
Without loss of generality, we may assume the top dotted arrow is identity (hence we can write the image of $x$ to be $x$ itself). Since $(V_\alpha(\alpha),\alpha)$ is maximal, $W=(V_\alpha(\alpha),\alpha)$. Therefore, $t\in\oop{Aut}((V_\alpha(\alpha),\alpha))$. By an induction argument, $t$ must be the identity (which boils down to the fact that $\alpha$, the maximal model in ${\bf K_1}$, is rigid). From the right dotted arrow, $x-\{y\}$ would be mapped to $x$, which is a contradiction.
\end{proof}
\begin{remark}
\begin{enumerate}
\item \mylabel{fsrmk2}{Remark \thetheorem} One way to save $AP$ is to redefine $\lk$ by the $E$-transitive closure, but it raises $\ls$ to $V_{\alpha-1}(\lambda)$. In this case, instability and the order property* length are up to $2^{\ls}$.
\item Our total order is ill-founded: $\alpha ,  \alpha-\{0\} ,  \alpha-\{0,1\} ,  \dots$ form an infinite descending sequence in $Q_0^{M^*}$. It is not clear how to extract a witness to the order property of length $>2^\lambda$. We will see in \ref{fsprom} that we can refine our example to have the order property at least up to $\beth_{\alpha-3}(\lambda)$.
\end{enumerate}
\end{remark}
\begin{corollary}\mylabel{fscor}{Corollary \thetheorem}
\begin{enumerate}
\item For stable AECs, the Hanf number for the order property* length is exactly $\han$.
\item The Hanf number for stability is at least $\han$. In other words, let $\lambda\geq\al$ and $\mu<\beth_{(2^\lambda)^+}$, there is a stable AEC ${\bf K}$ such that $\ls=\lambda$ and the first stability cardinal is greater than $\mu$.
\end{enumerate}
\end{corollary}
\begin{proof}
Combine \ref{fsprop} and \ref{fsprop2} and range $\alpha$ in $[\lambda,(2^\lambda)^+)$. 
\end{proof}
In the next two sections, we develop the machinery to show that the lower bound in (2) is tight, based on the arguments in \cite{s5}. Then we conclude: our example witnesses that the bound for the order property is also tight.

\section{Galois Morleyization and syntactic order property}
Galois Morleyization is a way to capture tameness syntactically by adding infinitary predicates. First recall the definition of tameness:
\begin{definition}\mylabel{tame2}{Definition \thetheorem}
Let $\kappa$ be an infinite cardinal. 
\begin{enumerate}
\item Let $p=\gtp(a/A,N)$ where $a=\langle a_i:i<\alpha\rangle$ may be infinite, $I\subseteq\alpha$, $A_0\subseteq A$. We write $l(p)\defeq l(a)$, $p\restriction A_0\defeq \gtp(a/A_0,N)$, $a^I=\langle a_i:i\in I\rangle$ and $p^I\defeq\gtp(a^I/A,N)$.
\item $K$ is \emph{$(<\kappa)$-tame for $(<\alpha)$-types} if for any subset $A$ in some model of $K$, any $p\neq q\in\gs^{<\alpha}(A)$, there is $A_0\subseteq A$, $|A_0|<\kappa$ with $p\restriction A_0\neq q\restriction A_0$. We omit $(<\alpha)$ if $\alpha=2$.
\item $K$ is \emph{$(<\kappa)$-short} if for any $\alpha\geq2$, any subset $A$ in some model of $ K$, $p\neq q\in\gs^{<\alpha}(A)$, there is $I\subseteq\alpha$, $\card{I}<\kappa$ with $p^I\neq q^I$. 
\item \emph{$\kappa$-tame} means $(<\kappa^+)$-tame. Similarly for shortness.
\end{enumerate}
\end{definition}
Now we construct Galois Morleyization:
\begin{definition}\cite[Definitions 3.3, 3.13]{s5}
Let $\kappa$ be an infinite cardinal and $K$ be an AEC in a (finitary) language $L$. The \emph{$(<\kappa)$-Galois Morleyization of $K$} is a class $\hat{K}$ of structures in a language $\hat{L}$ such that:
\begin{enumerate}
\item $\hat{L}$ is a $(<\kappa)$-ary language. For convenience we may require $L\subseteq\hat{L}$.
\item For each $p\in\gs^{<\kappa}(\emptyset)$, we add a predicate $R_p$ of length $l(p)$ to $\hat{L}$.
\item For each $M\in K$, we define $\hat{M}\in \hat{K}$ with $|M|=|\hat{M}|$. For $p\in\gs^{<\kappa}(M)$, $a\in |\hat{M}|^{l(p)}$, $\hat{M}\vDash R_p[a]$ iff $a\vDash p$ in $K$. Extend the definition to quantifier-free formulas of $\hat{L}_{\kappa,\kappa}$.
\item The \emph{$(<\kappa)$-syntactic type} of $a\in|\hat{M}|^{<\kappa}$ over $A\subseteq|\hat{M}|$ is defined by $\tp_{\text{qf-}\hat{L}_{\kappa,\kappa}}(a/A;\hat{M})$, namely the quantifier-free formulas of $\hat{L}_{\kappa,\kappa}$ over $A$ that $a$ satisfies. We will abbreviate it as $\tp_\kappa(a/A;\hat{M})$. 
\item For $\hat{M},\hat{N}\in \hat{K}$, we order $\hat{M}\leq_{\bf \hat{K}}\hat{N}$ iff $M\lk N$. We will omit the subscripts. 
\end{enumerate}
\end{definition}
\begin{remark}
\begin{enumerate}
\item If we allow AECs to have infinitary languages, we can view $\hat{K}$ as an AEC. 
\item The above is well-defined even for AECs without $AP$, but readers can assume the existence of a monster model $\mn$ for convenience.
\item $\card{\hat{L}}=\card{L}+\card{\gs^{<\kappa}(\emptyset)}\leq 2^{<(\kappa+\ls^+)}$.
\end{enumerate}
\end{remark}
The following justifies the definition of Galois Morleyization in tame AECs:
\begin{proposition}\emph{\cite[Corollary 3.18(2)]{s5}}\mylabel{gmprop}{Proposition \thetheorem}
Let $K$ be an AEC and $\hat{K}$ be its $(<\kappa)$-Galois Morleyization. For each $p=\gtp(b/A;M)\in\gs(A)$, define $p_\kappa\defeq\tp_\kappa(b/A;\hat{M})$ to be its $(<\kappa)$-syntactic version. Then $K$ is $(<\kappa)$-tame iff $p\mapsto p_\kappa$ is a 1-1 correspondence.
\end{proposition}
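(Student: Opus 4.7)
The plan is to prove the biconditional via a key translation lemma, working inside a monster model $\mn$ (as suggested in the remark following the Morleyization definition) to avoid bookkeeping about amalgams. Note first that $p \mapsto p_\kappa$ is by construction surjective onto its image, so the content of ``1-1 correspondence'' is just injectivity.

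The translation lemma I would establish first says: for $b, c$ of length $1$ and any tuple $\bar a \in A^{<\kappa}$, the Galois-type equality $\gtp(b/\bar a;\mn) = \gtp(c/\bar a;\mn)$ holds iff $\gtp(b\bar a/\emptyset;\mn) = \gtp(c\bar a/\emptyset;\mn)$, which (by construction of the Morleyization) is equivalent to $\hat\mn \vDash R_r[b\bar a] \leftrightarrow R_r[c\bar a]$ for every $r \in \gs^{<\kappa}(\emptyset)$ of length $1+|\bar a|$, and this in turn is equivalent to $\tp_\kappa(b/\bar a; \hat\mn) = \tp_\kappa(c/\bar a; \hat\mn)$, since quantifier-free $\hat L_{\kappa,\kappa}$-formulas over $\bar a$ are $(<\kappa)$-ary Boolean combinations of atomic formulas $R_r(x, \bar a)$ and of $L$-atomic formulas (the latter also being determined by $\gtp(b\bar a/\emptyset)$). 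The first step is immediate because an automorphism $f$ of $\mn$ satisfies $f(b\bar a) = c\bar a$ iff it sends $b$ to $c$ and fixes $\bar a$ pointwise.

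With the translation lemma in hand, the main argument is straightforward. For $(\Rightarrow)$, assuming tameness and $p \neq q$ in $\gs(A)$, tameness produces $A_0 \subseteq A$ of size $<\kappa$ with $p \upharpoonright A_0 \neq q \upharpoonright A_0$; enumerating $A_0$ as $\bar a$, the translation lemma yields some $R_r(x, \bar a) \in p_\kappa \triangle q_\kappa$, so $p_\kappa \neq q_\kappa$. For $(\Leftarrow)$, if $p \neq q$ in $\gs(A)$ then $p_\kappa \neq q_\kappa$, and a witnessing quantifier-free $\hat L_{\kappa,\kappa}$-formula $\phi(x; \bar a)$ has $<\kappa$ parameters; setting $A_0 \defeq \ran(\bar a)$, the translation lemma forces $p \upharpoonright A_0 \neq q \upharpoonright A_0$, establishing tameness. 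The main obstacle, such as it is, lies in the translation lemma itself; inside a monster model it is almost definitional, but without $\mn$ one has to run the same proof on the direct-amalgamation equivalence relation defining Galois types, carefully tracking which embeddings fix which parameters.
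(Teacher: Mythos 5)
Your proposal is correct and follows essentially the same route as the paper: both arguments reduce equality of Galois types over a small set $A_0$ to equality of the Galois types of the concatenated tuples $bA_0$, $b'A_0$ over $\emptyset$, which is exactly what the predicates $R_r$ code, and then run tameness in one direction and the $(<\kappa)$-arity of the witnessing formula in the other. Your ``translation lemma'' is just an explicit packaging of the step the paper performs inline, so the two proofs coincide in substance.
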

\begin{proof}
$\Rightarrow$: The map is well-defined because Galois types are finer than syntactic types. It is a surjection by construction. Suppose $p\neq q\in\gs(A)$, by $(<\kappa)$-tameness we may assume the domain $A$ has size $<\kappa$. Let $b\vDash p$ and $b'\vDash q$. Then $bA$ and $b'A$ satisfy different types in $\gs^{<\kappa}(\emptyset)$, say $r$ and $s$ repsectively. Thus $bA\vDash R_r\wedge\neg R_s$ while $b'A\vDash R_s\wedge\neg R_r$.

$\Leftarrow$: Suppose $p=\gtp(b/A;M)\neq q=\gtp(b'/A;M')$. Then $p_\kappa\neq q_\kappa$ and we can find $r\in\gs^{<\kappa}(\emptyset)$ and (a suitable enumeration of) $A_0\subseteq A$ such that $M\vDash R_r[b;A_0]$ but $M'\vDash \neg R_r[b';A_0]$. This means $bA_0\vDash r$ while $b'A_0\not\vDash r$. Hence $\gtp(b/A_0;M)\neq\gtp(b'/A_0;M')$ witnessing $(<\kappa)$-tameness.
\end{proof}
\begin{remark}
There is a stronger version assuming $(<\kappa)$-shortness in \cite[Corollary 3.18(1)]{s5} but we have no use of it here.
\end{remark}

We now define an infinitary version of the syntactic order property:
\begin{definition} Let $2\leq\alpha\leq\kappa$ and $1\leq\beta<\kappa$ be ordinals. 
\begin{enumerate}
\item \cite[Definition 4.2]{s5} In \ref{fsdef}, we replace all occurences of ``order property'' by ``syntactic order property'' while requiring the condition in (1) there be: there exist some $\langle a_i:i<\mu\rangle\subseteq \hat{M}\in \hat{K}$ and some quantifier-free $\hat{L}_{\kappa,\kappa}$ formula $\phi(x,y)$ such that $l(a_i)=\beta$, and for $i,j<\mu$, $i<j$ iff $\hat{M}\vDash\phi[a_i,a_j]$.
\item As in \ref{fsdef2}, we define syntactic order property* if in (1) we allow the index set to be a linear order $I$ with $|I|=\mu$, instead of being a well-ordering $\mu$.
\end{enumerate}
\end{definition}

The following links the (Galois) order property in $K$ with the syntactic order property in $\hat{K}$. 
\begin{proposition}\mylabel{fsprop3}{Proposition \thetheorem}
Let $\kappa$ be an infinite cardinal and $\hat{K}$ be $(<\kappa)$-Galois Morleyization of $K$. Let $1\leq\beta<\kappa$ be an ordinal and $M\in K$. Let $\lambda,\mu$ be infinite cardinals and $\chi\defeq\card{\gs^{\beta+\beta}(\emptyset)}$.
\begin{enumerate}
\item \emph{\cite[Proposition 4.4]{s5}}
\begin{enumerate}
\item If $\hat{M}$ has the syntactic the $\beta$-order property of length $\mu$, then $M$ has the $\beta$-order property of length $\mu$.
\item If $M$ has the $\beta$-order property of length $\mu$ for some $\mu\geq (2^{\lambda+\chi})^+$, then $\hat{M}$ has the syntactic $\beta$-order property of length $\lambda^+$.
\end{enumerate}
\item
\begin{enumerate}
\item If $\hat{M}$ has the syntactic $\beta$-order property* of length $\mu$, then $M$ has the $\beta$-order property* of length $\mu$.
\item If $M$ has the $\beta$-order property* of length $\mu$ for some $\mu\geq (2^{\lambda+\chi})^+$, then $\hat{M}$ has the syntactic $\beta$-order property* of length $\lambda^+$.
\end{enumerate}
\item The following are equivalent:
\begin{enumerate}
\item $K$ has the $\beta$-order property.
\item $K$ has the $\beta$-order property*.
\item $\hat{K}$ has the syntactic $\beta$-order property.
\item $\hat{K}$ has the syntactic $\beta$-order property*.
\end{enumerate}
\end{enumerate}
\end{proposition}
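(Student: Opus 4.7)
The plan is to treat parts (1) and (2) in parallel, since the arguments never use that the index set is well-ordered, and then to obtain (3) by combining them with the Hanf number result \ref{fsprop2}(2). For (1a) and (2a), I would exploit that every quantifier-free $\hat{L}_{\kappa,\kappa}$-formula $\phi(x,y)$ is a boolean combination (of size $<\kappa$) of the predicates $R_p$ for $p\in\gs^{<\kappa}(\emptyset)$, so truth of $\phi$ on a tuple is determined by the Galois type of that tuple over $\emptyset$. Applied to the syntactic witness $\phi,\langle a_i:i\in I\rangle$, for any $i_0<i_1$ and $j_0<j_1$ in $I$ the facts $\hat{M}\vDash\phi[a_{i_0},a_{i_1}]$ and $\hat{M}\vDash\neg\phi[a_{j_1},a_{j_0}]$ force $\gtp(a_{i_0}a_{i_1}/\emptyset;M)\neq\gtp(a_{j_1}a_{j_0}/\emptyset;M)$, yielding the (Galois) $\beta$-order property (resp. OP*) of length $\mu$ in $K$.

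For (1b) and (2b), I would apply Erd\H{o}s-Rado to the $\chi$-coloring of unordered pairs $\{i,j\}\subseteq I$ (with $i<j$) by $\gtp(a_ia_j/\emptyset;M)\in\gs^{\beta+\beta}(\emptyset)$; note $\beta+\beta<\kappa$ since $\beta<\kappa$ and $\kappa$ is an infinite cardinal, so these types are indeed coded by predicates $R_p\in\hat{L}$. The partition relation $(2^{\lambda+\chi})^+\to((\lambda+\chi)^+)^2_{\lambda+\chi}$, valid on arbitrary sets per the proof recalled earlier in the paper, extracts $I'\subseteq I$ of cardinality $\geq\lambda^+$ and a fixed $p\in\gs^{\beta+\beta}(\emptyset)$ with $\gtp(a_ia_j/\emptyset;M)=p$ for every ascending pair $i<j$ in $I'$. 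Setting $\phi(x,y):=R_p(x,y)$, the descending direction is handled by the original order property hypothesis: for $j<i$ in $I'$ we have $\gtp(a_ja_i/\emptyset;M)=p$, so order property forces $\gtp(a_ia_j/\emptyset;M)\neq p$, i.e.\ $\hat{M}\not\vDash R_p[a_i,a_j]$. Since $I'$ inherits the linear (and, when $I$ is well-ordered, well-ordered) structure from $I$, this simultaneously handles (1b) and (2b).

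For (3), the equivalences (a)$\Leftrightarrow$(c) and (b)$\Leftrightarrow$(d) follow from (1) and (2) respectively by letting $\lambda$ be unbounded, and (a)$\Rightarrow$(b), (c)$\Rightarrow$(d) are immediate. To close the loop I would establish (b)$\Rightarrow$(a) by re-running the Hanf number argument of \ref{fsprop2}(2) with $\gamma=\beta$: from $\beta$-OP* of length $\mu$ for all $\mu<\han$, Shelah's presentation theorem combined with Erd\H{o}s-Rado produces an $L$-indiscernible sequence of $\beta$-tuples that can be stretched to any well-ordering, giving $\beta$-OP of arbitrary length.

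The main obstacle will be the descending-pair step in (1b)/(2b): one must invoke the full strength of the order property hypothesis --- not merely distinctness of types on ascending pairs --- to conclude $\gtp(a_ia_j/\emptyset;M)\neq p$ when $j<i$, since Erd\H{o}s-Rado only homogenizes the ascending pairs of $I'$. Once this is in place, choosing the predicate $R_p$ as the defining formula is immediate and the two-cardinal arithmetic is routine.
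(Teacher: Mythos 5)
Your proposal is correct and follows essentially the same route as the paper: the (a) directions come from Galois types refining quantifier-free $\hat{L}_{\kappa,\kappa}$-types, the (b) directions from Erd\H{o}s--Rado applied to the $\gs^{\beta+\beta}(\emptyset)$-coloring of pairs followed by choosing $R_p$ as the formula, and (3) from combining (1),(2) with \ref{fsprop2}. The only (harmless) difference is that you homogenize only the ascending pairs and derive $\gtp(a_ia_j/\emptyset)\neq p$ for $j<i$ directly from the order property hypothesis, whereas the paper applies Erd\H{o}s--Rado a second time to the descending pairs as well.
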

\begin{proof}
\begin{enumerate}
\item (a) is true because Galois types are finer than syntactic types. For (b): suppose $\langle a_i:i<\mu\rangle$ witnesses the $\beta$-order property. By Erd\H{o}s-Rado Theorem
, we have $\mu\rightarrow(\lambda^+)^2_{2^\chi}$. Apply this to $(i<j)\mapsto\gtp(a_ia_j/\emptyset;M)$ and then on $(j<i)\mapsto\gtp(a_ja_i/\emptyset;M)$. We can find $\langle b_i:i<\lambda^+\rangle$ subsequence of $\langle a_i:i<\mu\rangle$, $p\neq q\in\gs^{\beta+\beta}(\emptyset)$ such that for $i<j<\lambda$, $\gtp(b_ib_j/\emptyset;M)=p$ and $\gtp(b_jb_i/\emptyset;M)=q$. We may choose $R_p$ (or $R_q$) to witness the $\beta$-syntactic order property.
\item The same proof goes through, because Erd\H{o}s-Rado Theorem 
applies to linear orders too. 
\item (1) gives (a)$\Leftrightarrow$(c) while (2) gives (b)$\Leftrightarrow$(d). (a)$\Leftrightarrow$(b) is by \ref{fsprop2}(1),(2).
\end{enumerate}
\end{proof}
\begin{definition}
Let $\kappa$ be an infinite cardinal, $2\leq\alpha\leq\kappa$ and $1\leq\beta<\kappa$ be ordinals, $\hat{K}$ be the $(<\kappa)$-Galois Morleyization of $K$. Then 
\begin{enumerate}
\item For $\mu\geq\ls+|\beta|$, $\hat{K}$ is \emph{$\beta$-syntactically stable in $\mu$} if $$\card{\{p_\kappa:p\in\gs^{\beta}(A;M),A\subseteq|M|,|A|\leq\mu,M\in K\}}\leq\mu.$$
\item For $\mu\geq\ls+|\alpha|$, $\hat{K}$ is \emph{$(<\alpha)$-syntactically stable in $\mu$} if $$\card{\{p_\kappa:p\in\gs^{<\alpha}(A;M),A\subseteq|M|,|A|\leq\mu,M\in K\}}\leq\mu.$$
\end{enumerate}
\end{definition}
\begin{corollary}Let $\beta\geq1$ be an ordinal and $\mu\geq\ls+|\beta|$ be a cardinal.
\begin{enumerate}
\item \emph{\cite[Fact 4.9]{s5}} If $K$ has the $\beta$-order property, then $K$ is not $\beta$-stable in $\mu$. If also $\beta<\kappa$, then $\hat{K}$ is not $\beta$-(syntactically) stable in $\mu$. 
\item The same conclusion holds when $K$ has the $\beta$-order property*.
\end{enumerate}
\end{corollary}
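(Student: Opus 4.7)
The plan is to mirror the proof of part (1), cited from \cite[Fact 4.9]{s5}, using linear orders in place of well-orders---essentially replaying the density argument already carried out in the proof of \ref{fsprop2}(3). Both the Galois and the syntactic conclusions will fall out of one single construction.

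First I would fix $\mu\geq\ls+|\beta|$ and assume $K$ has the $\beta$-order property*. Since by definition this gives a witness of length $\geq\beth_{(2^{<(\ls^++|\beta|)})^+}$, the Shelah presentation theorem and Erd\H{o}s-Rado argument underlying \ref{fsprop2}(1)(2) extract an $L$-indiscernible sequence whose Ehrenfeucht--Mostowski hull exhibits $\beta$-order property* along any prescribed linear order. I would choose a linear order $I$ with $\card{I}>\mu$ carrying a dense suborder $J$ of size $\mu$ (for instance the Dedekind completion of a $\mu$-sized dense linear order; the existence of such $I$ is classical and is the only non-routine input) and stretch to $\langle a_i:i\in I\rangle\subseteq M\in K$, each $a_i$ of length $\beta$.

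For the Galois conclusion I would copy the counting from \ref{fsprop2}(3): given distinct $i,i'\in I$, pick $j\in J$ strictly between them; then $\beta$-order property* forces $\gtp(a_ia_j/\emptyset;M)\neq\gtp(a_{i'}a_j/\emptyset;M)$, hence $\gtp(a_i/a_j;M)\neq\gtp(a_{i'}/a_j;M)$. Setting $A\defeq\bigcup_{j\in J}\ran(a_j)$, we have $\card{A}\leq\mu\cdot\card{\beta}=\mu$ while $\{\gtp(a_i/A;M):i\in I\}$ are pairwise distinct of cardinality $\card{I}>\mu$, so $K$ is not $\beta$-stable in $\mu$.

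For the syntactic conclusion, when $\beta<\kappa$, \ref{fsprop3}(3) promotes $\beta$-order property* of $K$ to syntactic $\beta$-order property* of $\hat{K}$, witnessed by some quantifier-free $\hat{L}_{\kappa,\kappa}$-formula $\phi(x,y)$ on the same stretched $\langle a_i:i\in I\rangle$ viewed inside $\hat{M}$. Replaying the density argument with $\phi$ in place of $\gtp$: for $i\neq i'$ in $I$ and $j\in J$ between them, $\phi(x,a_j)$ distinguishes $a_i$ from $a_{i'}$, so $\tp_\kappa(a_i/A;\hat{M})\neq\tp_\kappa(a_{i'}/A;\hat{M})$, producing $>\mu$ syntactic types over $A$ and contradicting $\beta$-syntactic stability in $\mu$. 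The main---and really only---obstacle is the existence of the linear order $I$ with the required density and size profile, which is standard; everything else is a direct adaptation of \ref{fsprop2}(3).
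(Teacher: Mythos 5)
Your proposal is correct and takes essentially the same route as the paper: both reduce the starred hypothesis to a stretched (syntactic) order-property witness via \ref{fsprop3} together with the Erd\H{o}s--Rado/Hanf-number machinery from the proof of \ref{fsprop2}, and then run the dense-suborder counting argument of \ref{fsprop2}(3) to produce more than $\mu$ types (Galois or quantifier-free $\hat{L}_{\kappa,\kappa}$) over a set of size $\mu$. The paper simply compresses all of this into a two-line pointer to those earlier proofs, so no further comparison is needed.
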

\begin{proof}
By \ref{fsprop3}, either assumption gives the syntactic $\beta$-order property. This implies $\beta$-syntactic instability in $\mu$, using the proof of \ref{fsprop2} (in particular replace $F^{\mathfrak{B}}(\alpha)$ by a witness of the syntactic $\beta$-order property of length $\beth_\alpha$).
\end{proof}
\section{Shelah's stability theorem}
We will connect syntactic stability with no syntactic order property. The original result due to Shelah was in a more general context but only proof sketches were given. Vasey \cite[Fact 4.10]{s5} applied it to AECs without a complete proof so we write out all the details. We will also remove the requirement that the order property length be a successor (which was hinted in \cite[Exercise 1.22]{sh300a}).
\begin{theorem}\emph{\cite[V.A. Theorem 1.19]{sh300a}}\mylabel{fsstthm}{Theorem \thetheorem}
Let $\chi\geq2^{<(\kappa+\ls^+)}$, $\mu$ be an infinite cardinal such that $\mu=\mu^{\chi}+2^{2^{<\chi}}$. Suppose $\hat{K}$ does not have the $(<\kappa)$-syntactic order property of length $\chi$, then $\hat{K}$ is $(<\kappa)$-syntactically stable in $\mu$. 
\end{theorem}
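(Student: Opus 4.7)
The plan is to prove the contrapositive: assuming $\hat{K}$ fails to be $(<\kappa)$-syntactically stable in $\mu$, produce a witness to the $(<\kappa)$-syntactic order property of length $\chi$. Fix $\hat{M}\in\hat{K}$ and $A\subseteq|\hat{M}|$ with $|A|\leq\mu$ over which more than $\mu$ distinct syntactic $(<\kappa)$-types are realized, and pick realizations $\langle b_i:i<\mu^+\rangle$ of pairwise distinct such types.

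The first ingredient is a counting bound. The set of quantifier-free $\hat{L}_{\kappa,\kappa}$-formulas in free variables of length $<\kappa$ has size at most $(|\hat{L}|+\kappa)^{<\kappa}\leq\chi^{<\kappa}\leq 2^{<\chi}$, using $|\hat{L}|\leq 2^{<(\kappa+\ls^+)}\leq\chi$; each ordered pair $(b_i,b_j)$ is separated by some such formula instantiated with a parameter tuple from $A^{<\kappa}$, and the total number of possible separating data is at most $2^{<\chi}\cdot\mu^{<\kappa}\leq\mu$ (using $\mu^\chi=\mu$). The core step is then to build, by induction on $\alpha<\chi$, a splitting tree whose nodes are (formula, parameter)-pairs from this bank and which successively partitions the surviving $>\mu$ realizations; if the construction is blocked at some stage $\alpha<\chi$, then all remaining realizations agree on every remaining formula-parameter pair, bounding the number of distinct types by at most $\mu$ and contradicting instability. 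If instead the tree reaches depth $\chi$, one thins the resulting system by a generalized Erd\H{o}s--Rado partition theorem applied to the $(2^{<\chi})$-coloring of ordered pairs of branches by the separating formula and parameter isomorphism-type; since $\mu\geq 2^{2^{<\chi}}$, this extracts a homogeneous subsequence of length $\chi$ on which a single formula $\phi^{*}(\bar{x},\bar{y})$ with fixed parameters witnesses the desired syntactic order property.

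The main obstacle is the extraction step: the formulas are genuinely $<\kappa$-ary in $\hat{L}_{\kappa,\kappa}$, so the compactness shortcuts of first-order stability are unavailable and one must route the argument through the generalized partition calculus. The cardinal arithmetic $\mu=\mu^{\chi}+2^{2^{<\chi}}$ is calibrated precisely so that every relevant bank of formulas, parameter sequences, and splitting histories has cardinality $\leq\mu$, and so that the Ramsey-style coloring that converts the splitting tree into a monotonic witness fits into the Erd\H{o}s--Rado bound needed to produce a homogeneous set of length $\chi$. Shelah's original argument in \cite[V.A.]{sh300a} executes this strategy, and Vasey's \cite[Fact 4.10]{s5} confirms that it transfers to the Galois Morleyization setting with no substantive change.
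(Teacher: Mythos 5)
Your overall shape is right (argue via a dichotomy: either every type fails to split over a small set of instances, in which case you count, or you build a long configuration yielding the order property), but the extraction step is a genuine gap, and it is precisely where the paper's proof does something different. You propose to color ordered pairs of branches by ``the separating formula and parameter isomorphism-type'' and apply Erd\H{o}s--Rado to get a homogeneous subsequence on which a single $\phi^*(\bar x,\bar y)$ with \emph{fixed} parameters witnesses the order property. The problem is that the separating parameters $\bar a_{ij}\in A^{<\kappa}$ depend on the \emph{pair} $(i,j)$: there are up to $\mu^{<\kappa}$ of them, far too many to put into a $2^{<\chi}$-coloring, and homogenizing only the formula and the isomorphism type of the parameters does not make the parameters themselves constant. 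The $(<\kappa)$-syntactic order property requires one quantifier-free formula evaluated on tuples each indexed by a \emph{single} $i$; pair-dependent parameters cannot be absorbed into such tuples, so the homogeneous set you extract does not witness the order property.

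The paper (following Shelah) avoids this entirely: it first uses $\mu=\mu^{<\chi}$ to replace the base set by a $(<\chi)$-compact one (\ref{fsstdef}) and then proves the dichotomy \ref{fsstlem2}: either every $\tp_\kappa(m/A;\hat{M})\restriction\phi$ fails to $\phi$-split over some $\Pi$ of size $<\chi$, or one recursively builds $\langle m_i,b_i,c_i:i<\chi\rangle$ inside $A$ with $m_j\vDash\bigwedge_{i\leq j}\big(\phi(x;b_i)\leftrightarrow\neg\phi(x;c_i)\big)$ and $\hat{M}\vDash\phi[m_i;b_j]\leftrightarrow\phi[m_i;c_j]$ for $i<j$; the single-indexed triples $m_ib_ic_i$ together with the formula $\phi(x;y)\leftrightarrow\phi(x;z)$ then witness the order property of length $\chi$ directly, with no partition calculus at all (Erd\H{o}s--Rado is used elsewhere in the paper, e.g.\ in \ref{fsprop3}, but not here). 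The counting in the first alternative is also organized differently from your sketch: one fixes a single $\phi$ (this is where $\mu=\mu^\chi$ rather than $\mu^{<\chi}$ is needed), chooses $\Pi_{p_\kappa}$ in one of $\mu^{<\chi}$ ways, and applies \ref{fsstlem1} to bound the non-$\phi$-splitting restrictions over a fixed $\Pi$ by $2^{2^{|\Pi|}}\leq2^{2^{<\chi}}$. To salvage your route you would need to make the witnessing parameters single-indexed \emph{before} any Ramsey argument---which is exactly what the splitting chain accomplishes.
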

The proof will be given after \ref{fsstlem2}. Before that we state some relevant definitions and lemmas.
\begin{definition}\mylabel{syndef}{Definition \thetheorem}
Let $\kappa$ be an infinite cardinal, $\Pi$ be a set of quantifier-free formulas of $\hat{L}_{\kappa,\kappa}$ over $A$, and $p_\kappa$ be a $(<\kappa)$-syntactic type over $A$. We say \emph{$p_\kappa$ splits over $\Pi$} if there are $\phi(x;b),\neg\phi(x;c)\in p_\kappa$ such that for any $\hat{M}$ containing $b,c$ and the parameters from $\Pi$, any $\psi(y;d)\in\Pi$ with $l(y)=l(b)=l(c)$, we have $\hat{M}\vDash\psi[b;d]\Leftrightarrow \hat{M}\vDash\psi[c;d]$ (the choice of $\hat{M}$ does not matter because its interpretation of $R_p$ is external).

If we require the witnesses $\phi(x;b),\neg\phi(x;c)$ to be from a fixed formula $\phi(x;y)$, then we say \emph{$p_\kappa$ $\phi$-splits over $\Pi$}.
\end{definition}
\begin{lemma}\emph{\cite[V.A. Fact 1.10(4)]{sh300a}}\mylabel{fsstlem1}{Lemma \thetheorem} Using the above notation,
\begin{align*}
\card{\{p_\kappa\restriction \phi:p_\kappa\text{ does not $\phi$-split over }\Pi\}}&\leq2^{2^{|\Pi|}}\\
\card{\{p_\kappa:p_\kappa\text{ does not split over }\Pi\}}&\leq2^{2^{|\Pi|}\cdot{\chi}}
\end{align*} where $\chi\defeq|\hat{L}|^{<\kappa}=\big(|\gs^{<\kappa}(\emptyset)|\big)^{<\kappa}\leq 2^{<(\kappa+\ls^+)}$ is the size of the set of quantifier-free formulas of $\hat{L}_{\kappa,\kappa}$.
\end{lemma}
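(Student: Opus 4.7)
The plan is to exploit non-$\phi$-splitting as saying that the membership of an instance $\phi(x;b)$ in $p_\kappa$ factors through a coarse $\Pi$-invariant of $b$. For each tuple $b$ from the domain $A$ of appropriate length, define its \emph{$\Pi$-trace} $\sigma(b)\defeq\{\psi(y;d)\in\Pi : l(y)=l(b),\ \hat{M}\vDash\psi[b;d]\}\subseteq\Pi$. By unpacking \ref{syndef}, $p_\kappa$ does not $\phi$-split over $\Pi$ precisely when, whenever $\sigma(b)=\sigma(c)$, the instance $\phi(x;b)$ lies in $p_\kappa$ iff $\phi(x;c)$ does.

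First I would prove the $\phi$-splitting bound. Fix a quantifier-free $\hat{L}_{\kappa,\kappa}$-formula $\phi(x;y)$ and suppose $p_\kappa$ does not $\phi$-split over $\Pi$. Define $F_{p,\phi}\colon\mathcal{P}(\Pi)\to 2$ by $F_{p,\phi}(S)=1$ iff $\phi(x;b)\in p_\kappa$ for some (equivalently, by non-$\phi$-splitting, every) $b$ with $\sigma(b)=S$. This is well-defined, and since $p_\kappa$ is a complete $(<\kappa)$-syntactic type, $F_{p,\phi}$ fully reconstructs $p_\kappa\restriction\phi$. There are at most $2^{|\Pi|}$ possible traces, hence at most $2^{2^{|\Pi|}}$ such functions, yielding the first inequality.

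For the full non-splitting bound, recall that a $(<\kappa)$-syntactic type is determined by the family of its restrictions $p_\kappa\restriction\phi$ as $\phi(x;y)$ ranges over quantifier-free $\hat{L}_{\kappa,\kappa}$-formulas (immediate since by definition $p_\kappa$ is a set of such formulas, partitioned by their underlying $\phi(x;y)$). Non-splitting of $p_\kappa$ over $\Pi$ amounts, by \ref{syndef}, to non-$\phi$-splitting for every such $\phi$. Hence every non-splitting $p_\kappa$ is coded by the sequence $\langle F_{p,\phi} : \phi \rangle$. The number of quantifier-free $\hat{L}_{\kappa,\kappa}$-formulas $\phi(x;y)$ is at most $\chi=|\hat{L}|^{<\kappa}$, so the total count of codes is $(2^{2^{|\Pi|}})^{\chi}=2^{2^{|\Pi|}\cdot\chi}$, as required.

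The main obstacle is bookkeeping around variable lengths: the trace $\sigma(b)$ must sample only those $\psi(y;d)\in\Pi$ whose object variable $y$ matches $l(b)$, exactly as in the splitting definition, and one must verify that $p_\kappa\restriction\phi$ is really \emph{recovered from} (not merely compatible with) $F_{p,\phi}$. Both points are routine once the definitions are unpacked, and the identity $\chi=\big(|\gs^{<\kappa}(\emptyset)|\big)^{<\kappa}\leq 2^{<(\kappa+\ls^+)}$ is just the cardinality of the set of quantifier-free $\hat{L}_{\kappa,\kappa}$-formulas, counting tuples of predicate symbols and variables of length $<\kappa$.
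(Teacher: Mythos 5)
Your proposal is correct and follows essentially the same route as the paper: the $\Pi$-trace $\sigma(b)$ is exactly the paper's partition of parameters into $2^{|\Pi|}$ many $\Pi$-equivalence classes, the function $F_{p,\phi}$ is the paper's ``choose whether $\phi(x;b)$ or $\neg\phi(x;b)$ is in $p_\kappa\restriction\phi$ per class,'' and the second bound is obtained identically by noting a type is determined by its $\chi$-many restrictions $p_\kappa\restriction\phi$. Your write-up actually makes explicit the final count $(2^{2^{|\Pi|}})^{\chi}=2^{2^{|\Pi|}\cdot\chi}$, which the paper leaves implicit.
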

\begin{proof}
We count the number of combinations to build a $p_\kappa\restriction\phi$ that does not $\phi$-split over $\Pi$. Partition the parameters of $\phi$ by their $\Pi$-type. Namely, $b,c$ are equivalent iff for any $\hat{M}$ containing $b,c$ and the parameters from $\Pi$, any $\psi(y;d)\in\Pi$ with $l(y)=l(b)=l(c)$, we have $\hat{M}\vDash\psi[b;d]\Leftrightarrow \hat{M}\vDash\psi[c;d]$. Then there are $2^{|\Pi|}$-many classes. Within each class, say containing $b$, it remains to choose whether $\phi(x;b)$ or $\neg\phi(x;b)$ is in $p_\kappa\restriction\phi$. Hence we have $2^{2^{|\Pi|}}$-many choices.

The second part follows from the observation that a $(<\kappa)$-syntactic type $p_\kappa$ is determined by its restrictions $p_\kappa\restriction\phi$ where $\phi$ is a quantifier-free formula of $\hat{L}_{\kappa,\kappa}$.
\end{proof}
\begin{definition}\mylabel{fsstdef}{Definition \thetheorem}
A set $A$ is \emph{$(<\chi)$-compact} if for any $\hat{M}$ containing $A$, any cardinal $\lambda<\chi$, any quantifier-free formulas $\{\phi_i(x):i<\lambda\}$ in $\hat{L}_{\kappa,\kappa}$ with parameters from $A$, if $\hat{M}\vDash\bigwedge_{i<\lambda}\phi_i[b]$ for some $b\in\hat{M}$, then $\hat{M}\vDash\bigwedge_{i<\lambda}\phi_i[a]$ for some $a\in A$.
\end{definition}
\begin{lemma}\emph{\cite[V.A. Theorem 1.12]{sh300a}}\mylabel{fsstlem2}{Lemma \thetheorem}
Let $\chi$ be an infinite cardinal, $A$ be $(<\chi)$-compact with $A\subseteq|\hat{M}|$, $\phi(x;y)$ be a quantifier-free formula in $\hat{L}_{\kappa,\kappa}$. Either
\begin{enumerate}
\item For any $m\in|\hat{M}|$, there is a set $\Pi\subseteq\{\phi(x;a):a\in [A]^{<\kappa}\}$ such that $\card{\Pi}<\chi$ and $\tp_\kappa(m/A;\hat{M})\restriction\phi$ does not $\phi$-split over $\Pi$; or
\item $A\subseteq|\hat{M}|$ witnesses the $(<\kappa)$-syntactic order property of length $\chi$. 
\end{enumerate}
\end{lemma}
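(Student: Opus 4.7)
The plan is to prove the dichotomy by contrapositive: if (1) fails then (2) holds. Suppose there exists $m \in |\hat{M}|$ witnessing this failure, so that, writing $p_\kappa := \tp_\kappa(m/A;\hat{M})$, the restriction $p_\kappa\restriction\phi$ $\phi$-splits over every admissible $\Pi$ with $|\Pi|<\chi$. I will use the splitting at each stage to produce witnesses $b_\alpha, c_\alpha$, and the $(<\chi)$-compactness of $A$ to pull ``copies of $m$'' down to elements $e_\alpha \in A$, thereby assembling a linearly ordered sequence inside $A$.

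Build by induction on $\alpha<\chi$ tuples $e_\alpha \in A$ of length $l(x)$ and $b_\alpha, c_\alpha \in [A]^{<\kappa}$ of length $l(y)$ satisfying: (i) $\phi(x;b_\alpha), \neg\phi(x;c_\alpha) \in p_\kappa$; (ii) $\hat{M}\vDash \phi[e_\beta;b_\alpha] \leftrightarrow \phi[e_\beta;c_\alpha]$ for all $\beta<\alpha$; and (iii) $\hat{M}\vDash \phi[e_\alpha;b_\beta] \wedge \neg\phi[e_\alpha;c_\beta]$ for all $\beta\leq\alpha$. At stage $\alpha$, let $\Pi_\alpha := \{\phi(e_\beta;y): \beta<\alpha\}$, a set of quantifier-free $\hat{L}_{\kappa,\kappa}$-formulas with parameters $e_\beta \in [A]^{<\kappa}$ and free variable of length $l(y)$; its cardinality is $|\alpha|<\chi$. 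The failure of (1) produces $b_\alpha, c_\alpha$ witnessing $\phi$-splitting of $p_\kappa\restriction\phi$ over $\Pi_\alpha$, and these lie in $[A]^{<\kappa}$ because $p_\kappa$'s parameters come from $A$. The formula set $\{\phi(x;b_\beta) \wedge \neg\phi(x;c_\beta): \beta\leq\alpha\}$ has fewer than $\chi$ members, parameters from $A$, and is realized in $\hat{M}$ by $m$; the $(<\chi)$-compactness of $A$ (Definition \ref{fsstdef}) supplies the required $e_\alpha \in A$.

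With the sequence in hand, set $a_\alpha := e_\alpha \frown b_\alpha \frown c_\alpha$, a tuple of length $<\kappa$ from $A$, and consider the quantifier-free $\hat{L}_{\kappa,\kappa}$-formula
\[ \Psi(x_1 x_2 x_3; y_1 y_2 y_3) := \neg\phi(x_1;y_2) \vee \phi(x_1;y_3). \]
When $\alpha\geq\beta$, clause (iii) at stage $\alpha$ gives $\phi[e_\alpha;b_\beta] \wedge \neg\phi[e_\alpha;c_\beta]$, so both disjuncts of $\Psi[a_\alpha;a_\beta]$ are false. When $\alpha<\beta$, clause (ii) at stage $\beta$, applied via $\phi(e_\alpha;y)\in\Pi_\beta$, gives $\phi[e_\alpha;b_\beta] \leftrightarrow \phi[e_\alpha;c_\beta]$; regardless of their common truth value, one disjunct of $\Psi[a_\alpha;a_\beta]$ holds. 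Hence $\hat{M}\vDash\Psi[a_\alpha;a_\beta]$ iff $\alpha<\beta$, so $\langle a_\alpha : \alpha<\chi\rangle$ together with $\Psi$ witnesses the $(<\kappa)$-syntactic order property of length $\chi$, establishing (2).

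The main obstacle is arranging the interlocked bookkeeping so that the asymmetric clauses (ii) and (iii) can coexist. At stage $\alpha$, $\Pi_\alpha$ must contain $\phi(e_\beta;y)$ for every $\beta<\alpha$ so that the new splitting witnesses $b_\alpha, c_\alpha$ become indistinguishable from the perspective of every older $e_\beta$, while simultaneously the compactness step must yield an $e_\alpha \in A$ that does distinguish $(b_\beta,c_\beta)$ for every $\beta\leq\alpha$. This asymmetry --- older $e_\beta$'s are powerless against newer $(b,c)$ pairs, yet each $e_\alpha$ separates all $(b,c)$ pairs up to stage $\alpha$ --- is exactly what induces the linear order on $\langle a_\alpha \rangle$.
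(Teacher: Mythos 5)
Your proof is correct and follows essentially the same route as the paper's: from the failure of (1) you iterate $\phi$-splitting $\chi$ times, pull the distinguishing element down into $A$ at each stage via $(<\chi)$-compactness, and read the order property off the triples $e_\alpha{}^\frown b_\alpha{}^\frown c_\alpha$. The only differences are cosmetic --- you fix the orientation of the splitting witnesses and use $\neg\phi(x_1;y_2)\vee\phi(x_1;y_3)$ where the paper uses $\phi(x;y)\leftrightarrow\phi(x;z)$.
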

\begin{proof}
Suppose (1) does not hold, then we can pick $m\in|\hat{M}|$ such that $\tp_\kappa(m/A;\hat{M})\restriction\phi$ splits over any $\Pi$ with $|\Pi|<\chi$. Thus we can recursively build 
\begin{enumerate}
\item $\langle m_i,b_i,c_i:i<\chi \rangle$ inside $A$.
\item For $j<\chi$, $\hat{M}\vDash\phi[m;b_j]\leftrightarrow\neg\phi[m;c_j]$
\item For $i<j<\chi$, $\hat{M}\vDash\phi[m_i;b_j]\leftrightarrow\phi[m_i;c_j]$.
\item For $j<\chi$, $m_j\vDash\bigwedge_{i\leq j}\big(\phi(x;b_i)\leftrightarrow\neg\phi(x;c_i)$\big).
\end{enumerate}
The construction is possible by the definition of $\phi$-splitting and by $(<\chi)$-compactness of $A$. The sequence $\langle m_ib_ic_i:i<\chi\rangle$ witnesses the $(<\kappa)$-syntactic order property of length $\chi$ via the formula $\phi(x;y)\leftrightarrow\phi(x;z)$. 
\end{proof}
\begin{proof}[Proof of \ref{fsstthm}]
Let $A$ be a set of size $\mu$. As in \ref{fsstlem1}, $\chi$ bounds the number of quantifier-free formulas of $\hat{L}_{\kappa,\kappa}$. Also $\mu=\mu^{<\chi}$, so we may assume $A$ is $(<\chi)$-compact (see \ref{fsstdef}). Since \ref{fsstlem2}(2) fails, (1) must hold for each quantifier-free formula $\phi$ of $\hat{L}_{\kappa,\kappa}$. 

Now we count the number of $(<\kappa)$-syntactic types. Each type $p_\kappa$ is determined by its restrictions $\{p_\kappa\restriction\phi:\phi\text{ is a quantifier-free formula in }\hat{L}_{\kappa,\kappa}\}$. Since $\mu=\mu^{\chi}$, we may assume $p_\kappa=p_\kappa\restriction\phi$ for a fixed $\phi$ (this is where we need $\mu=\mu^\chi$ instead of $\mu=\mu^{<\chi}$). By  \ref{fsstlem2}(1), we can find some $\Pi_{p_\kappa}$ of size $<\chi$ such that $p_\kappa$ does not $\phi$-split over $\Pi_{p_\kappa}$. There are $[A]^{<\chi}=\mu^{<\chi}$-many ways to choose $\Pi_{p_\kappa}$. For each fixed $\Pi=\Pi_{p_\kappa}$, \ref{fsstlem1} gives at most $2^{2^{|\Pi|}}=2^{2^{<\chi}}$-many choices for $p_\kappa$. So in total there are $\mu^{<\chi}+2^{2^{<\chi}}=\mu$-many choices.
\end{proof}
\begin{corollary}\mylabel{fscor2}{Corollary \thetheorem} Let $K$ be a stable AEC.
\begin{enumerate}
\item \emph{\cite[Theorem 4.13]{s5}} If $K$ is $(<\kappa)$-tame, has $AP$ and is stable in some cardinal $\geq\kappa^-$, then the first stability cardinal is bounded above by $\beth_{(2^{<(\kappa+\ls^+)})^+}$.
\item
If $K$ is $(<\kappa)$-tame and does not have $(<\kappa)$-order property of length $\chi\defeq2^{<(\kappa+\ls^+)}$. then the first stability cardinal is bounded above by $2^{2^{<\chi}}$.
\item If $K$ is $\ls$-tame and does not have $\ls$-order property of length $2^{\ls}$, then the first stability cardinal is bounded above by $\beth_3(\ls)$.
\item Let $|D(T)|\defeq|\gs^{<\omega}(\emptyset)|$. If $K$ is $(<\al)$-tame and does not have $(<\omega)$-order property of length $|D(T)|$, then the first stability cardinal is bounded above by $\beth_2(|D(T)|)$.
\end{enumerate}
\end{corollary}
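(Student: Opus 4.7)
The plan is to combine Shelah's syntactic stability theorem (Theorem~\ref{fsstthm}), the Galois-to-syntactic transfer of order property (Proposition~\ref{fsprop3}), and the tameness dictionary (Proposition~\ref{gmprop}). All four items follow the same template: produce no $(<\kappa)$-syntactic order property of length $\chi$ in $\hat{K}$, plug into Theorem~\ref{fsstthm} to get $(<\kappa)$-syntactic stability in some $\mu'$, and then use Proposition~\ref{gmprop} to transfer back to Galois stability in $\mu'$ (since $1$-types are in particular $(<\kappa)$-types, the first stability cardinal is bounded by $\mu'$).

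For~(1), I would first convert the stability hypothesis into no $(<\kappa)$-order property*. Stability in some $\lambda\geq\kappa^-$, combined with $(<\kappa)$-tameness and Theorem~\ref{gvstab}, yields stability in every $\mu=\mu^\lambda$. Since $\kappa^-\leq\lambda$, such $\mu$ also satisfies $\mu^{<\kappa}=\mu$, and Theorem~\ref{bonthm} (using $AP$) promotes this to $(<\kappa)$-stability in $\mu$. Proposition~\ref{fsprop2}(3) then rules out $(<\kappa)$-order property*, and Proposition~\ref{fsprop3}(1)(a) contrapositively rules out $(<\kappa)$-syntactic order property in $\hat{K}$ of length $\chi:=2^{<(\kappa+\ls^+)}$. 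Taking $\mu':=\beth_{\chi^+}$, which satisfies $(\mu')^\chi=\mu'$ since $\cf(\mu')=\chi^+>\chi$ and $\mu'\geq 2^{2^{<\chi}}$ by iterating $\beth$ past $\chi$, Theorem~\ref{fsstthm} gives syntactic stability in $\mu'=\beth_{(2^{<(\kappa+\ls^+)})^+}$.

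For~(2) the hypothesis is directly of the form fed into the template, and the smallest $\mu'$ with $\mu'=(\mu')^\chi+2^{2^{<\chi}}$ is $\mu'=2^{2^{<\chi}}$ (verify $(2^{2^{<\chi}})^\chi=2^{2^{<\chi}\cdot\chi}=2^{2^{<\chi}}$ using $\chi\leq 2^{<\chi}$). Items~(3) and~(4) are~(2) with $\kappa=\ls^+$ and $\kappa=\al$ respectively, modulo two adjustments. First, a padding step: for $\beta_0\leq\beta<\kappa$ and a fixed $\bar{c}$ of length $\beta-\beta_0$, replacing each $a_i$ by $a_i\bar{c}$ converts a $\beta_0$-order property (or $\beta_0$-order property*) sequence into a $\beta$-one, because Galois types restrict to sub-tuples---so if the padded ascending/descending types were equal, so would their projections to the $a$-coordinates, contradicting the unpadded hypothesis. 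Hence ``no $\ls$-order property of length $2^\ls$'' upgrades to ``no $(<\ls^+)$-order property of length $2^\ls$'' in~(3), and for~(4) the hypothesis is already in $(<\al)$-form. Second, inspection of the proofs of Lemmas~\ref{fsstlem1} and~\ref{fsstlem2} shows that the parameter $\chi$ of Theorem~\ref{fsstthm} is only required to bound the number of quantifier-free formulas of $\hat{L}_{\kappa,\kappa}$, i.e., $|\hat{L}|^{<\kappa}$, rather than the conservative $2^{<(\kappa+\ls^+)}$ stated in its hypothesis; for~(4) this quantity is $|D(T)|$, yielding $\mu'\leq 2^{2^{<|D(T)|}}\leq\beth_2(|D(T)|)$, while for~(3) the generic $\chi=2^\ls$ yields $\mu'\leq 2^{2^{<2^\ls}}\leq\beth_3(\ls)$.

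The main obstacle is the bookkeeping in~(3) and~(4): verifying that padding respects both order property and order property*, and that the $\chi$-parameter of Theorem~\ref{fsstthm} may indeed be shrunk to $|\hat{L}|^{<\kappa}$ in the argument so that the finer count for~(4) goes through. Neither step is conceptually deep, but both are needed to match the stated bounds exactly.
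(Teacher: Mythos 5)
Your overall template for (1) --- pass to the $(<\kappa)$-Galois Morleyization, use \ref{gmprop} to identify syntactic and Galois stability, the contrapositive of \ref{fsprop3}(1)(a) to transfer failure of the order property, and \ref{fsstthm} to recover stability, with \ref{gvstab} and \ref{bonthm} upgrading the hypothesis to $(<\kappa)$-stability --- is exactly the paper's route for (1); the paper does not write out (2)--(4), and your reductions there (including the observation that the $\chi$ of \ref{fsstthm} only needs to bound the number of quantifier-free formulas of $\hat{L}_{\kappa,\kappa}$, which is what makes the $\beth_2(|D(T)|)$ bound in (4) come out) are the intended ones. One small caveat in (3): padding only converts a $\beta_0$-order-property witness into a $\beta$-witness for $\beta_0\leq\beta$; for ordinals $\ls<\beta<\ls^+$ you need to reindex $\beta$-tuples as $\ls$-tuples rather than pad.

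The genuine problem is in (1), at the step where you claim that \ref{fsprop2}(3) together with \ref{fsprop3}(1)(a) rules out the $(<\kappa)$-syntactic order property \emph{of length $\chi:=2^{<(\kappa+\ls^+)}$}. It does not: \ref{fsprop2}(3) only produces \emph{some} $\chi<\beth_{(2^{<(\kappa+\ls^+)})^+}$ at which the $(<\kappa)$-order property fails, and in an AEC there is no compactness with which to push that failure down to the specific length $2^{<(\kappa+\ls^+)}$ --- that is precisely why the Hanf number, rather than $2^{2^{<2^{<(\kappa+\ls^+)}}}$, appears in the conclusion of (1). The paper's proof instead carries this unspecified $\chi$ along (assuming without loss of generality $\chi\geq 2^{<(\kappa+\ls^+)}$ so that \ref{fsstthm} applies) and concludes $(<\kappa)$-stability in $2^{2^{<\chi}}$, which is still below $\beth_{(2^{<(\kappa+\ls^+)})^+}$ because a beth number with successor-cardinal index is closed under $\chi\mapsto 2^{2^{<\chi}}$ for $\chi$ below it. Your final choice $\mu'=\beth_{\chi^+}$ must change accordingly: with the corrected, possibly much larger, $\chi$ the cardinal $\beth_{\chi^+}$ overshoots the claimed bound, so one should take $\mu'=2^{2^{<\chi}}$ as in the paper. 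With that repair the argument for (1) closes.
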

\begin{proof}
We prove (1): Since ${\bf K}$ is $(<\kappa)$-tame, by \ref{gmprop} $(<\kappa)$-syntactic stability in ${\bf\hat{K}}$ is equivalent to $(<\kappa)$-stability in ${\bf K}$. Also, by the contrapositive of \ref{fsprop3}(1)(a), no $(<\kappa)$-order property of length $\chi$ in ${\bf K}$ implies no $(<\kappa)$-syntactic order property in ${\bf\hat{K}}$ of length $\chi$.

Let ${\bf K}$ be stable in some $\lambda\geq\kappa^-$. Since it is $(<\kappa)$-tame, \ref{gvstab} gives stability in $\beth_{\lambda^+}(\lambda)$ and \ref{bonthm} gives $(<\kappa)$-stability in $\beth_{\lambda^+}(\lambda)$. By \ref{fsprop2}(3), there is $\chi<\beth_{(2^{<(\kappa+\ls^+)})^+}$ such that ${\bf K}$ does not have $(<\kappa)$-order property of length $\chi$. We may assume $\chi\geq2^{<(\kappa+\ls^+)}$. By the first paragraph and \ref{fsstthm}, $K$ is $(<\kappa)$-stable in $2^{2^{<\chi}}<\beth_{(2^{<(\kappa+\ls^+)})^+}$.
\end{proof}
\begin{remark}\mylabel{fsstrmk}{Remark \thetheorem}
In particular (4) misses the actual lower bound by $\beth_2$ \cite[III Theorem 5.15]{sh90}. 
\end{remark}
We now show the promised result in \ref{fsrmk2}:
\begin{corollary}\mylabel{fsprom}{Corollary \thetheorem}
Let $\lambda$ be an infinite cardinal and $\gamma$ be an ordinal with $\lambda\leq\gamma<(2^\lambda)^+$. Then there is a stable AEC ${\bf K}$ such that $\ls=\lambda$, ${\bf K}$ has the order property of length up to $\beth_{\gamma}(\lambda)$. Moreover, ${\bf K}$ has $JEP$, $NMM$, $(<\al)$-tameness but not $AP$.
\end{corollary}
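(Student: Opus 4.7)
The plan is to reuse the AEC ${\bf K}$ of \ref{fsprop} with parameter $\alpha:=\gamma+3$, which still lies in $[\lambda,(2^\lambda)^+)$. That construction immediately supplies every clause of the corollary except the well-ordered order property: $\ls=\lambda$, $JEP$, $NMM$, $(<\al)$-tameness, failure of $AP$, stability in every cardinal $\geq\beth_{\gamma+3}(\lambda)$, and instability throughout $[\lambda,\beth_{\gamma+3}(\lambda))$. So the remaining task is to upgrade the order property* of length $\beth_\alpha(\lambda)$ (on an ill-founded total order, per \ref{fsrmk2}) to a genuine $(<\al)$-order property of length $\beth_\gamma(\lambda)$.

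Rather than attempt to carve out a long well-ordered subsequence from the order on $V_\alpha(\alpha)$, which is the obstruction flagged in \ref{fsrmk2}, I would obtain the order property by contradiction using the Galois Morleyization machinery of Section 6. Suppose ${\bf K}$ lacks the $(<\al)$-order property of length $\chi:=\beth_\gamma(\lambda)$. Since Galois types refine syntactic types, the contrapositive of \ref{fsprop3}(1)(a) gives that the $(<\al)$-Galois Morleyization $\hat{K}$ lacks the $(<\al)$-syntactic order property of length $\chi$. Because $\gamma\geq\lambda\geq\al$, we have $\chi\geq 2^\lambda=2^{<(\al+\ls^+)}$, so the lower-bound hypothesis of Shelah's theorem \ref{fsstthm} at $\kappa=\al$ is in reach.

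The key calculation is to take $\mu:=\beth_{\gamma+2}(\lambda)=2^{\beth_{\gamma+1}(\lambda)}$ and verify $\mu=\mu^\chi+2^{2^{<\chi}}$ using $2^{<\chi}\leq 2^\chi=\beth_{\gamma+1}(\lambda)$ and $\chi\leq\beth_{\gamma+1}(\lambda)$. Then \ref{fsstthm} produces $(<\al)$-syntactic stability of $\hat{K}$ in $\mu$, and pairing $(<\al)$-tameness with \ref{gmprop} transports this back to Galois stability of ${\bf K}$ in $\mu$. But $\mu<\beth_{\gamma+3}(\lambda)=\beth_\alpha(\lambda)$ lies inside the instability range supplied by \ref{fsprop}; contradiction. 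Hence the $(<\al)$-order property of length $\chi$ holds, and restricting to subsequences yields it at every shorter length.

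The main obstacle is not any single step but the $\beth$-index calibration: the three-level gap between $\chi=\beth_\gamma(\lambda)$ and the instability ceiling $\beth_\alpha(\lambda)=\beth_{\gamma+3}(\lambda)$ is exactly what is needed to absorb the $2^{2^{<\chi}}$ inflation built into \ref{fsstthm}, matching the $\beth_{\alpha-3}(\lambda)$ target predicted in \ref{fsrmk2}. Once $\alpha=\gamma+3$ is fixed, the remaining steps are routine cardinal arithmetic and invocations of results already assembled in the paper.
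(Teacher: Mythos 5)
Your proposal is correct and follows essentially the same route as the paper: take the example of \ref{fsprop} with $\alpha=\gamma+3$, assume no $(<\al)$-order property of length $\chi=\beth_\gamma(\lambda)$, pass to the Galois Morleyization via the contrapositive of \ref{fsprop3}(1)(a), apply \ref{fsstthm} together with \ref{gmprop} to get stability in $\beth_2(\chi)=\beth_{\gamma+2}(\lambda)=\beth_{\alpha-1}(\lambda)$, and contradict the instability of \ref{fsprop} below $\beth_\alpha(\lambda)$. Your explicit verification that $\mu=\beth_{\gamma+2}(\lambda)$ satisfies $\mu=\mu^\chi+2^{2^{<\chi}}$ and that $\chi\geq 2^{<(\al+\ls^+)}=2^\lambda$ is in fact slightly more careful than the paper's own write-up.
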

\begin{proof}
Let $\chi\defeq\beth_{\gamma}(\lambda)$.
We use the example in \ref{fsprop} with $\alpha=\gamma+3$. 
Suppose ${\bf K}$ has no $(<\omega)$-order property of length $\chi$.
Since ${\bf K}$ is $(<\al)$-tame, by \ref{gmprop} $(<\omega)$-syntactic stability in ${\bf\hat{K}}$ is equivalent to $(<\omega)$-stability in ${\bf K}$. Also, by the contrapositive of \ref{fsprop3}(1)(a), no $(<\omega)$-order property of length $\chi$ in ${\bf K}$ implies no $(<\omega)$-syntactic order property in ${\bf\hat{K}}$ of length $\chi$. Since $\chi\geq 2^{<\al}$, by \ref{fsstthm}, ${\bf K}$ is $(<\omega)$-stable in all $\mu=\mu^\chi+2^{2^{<\chi}}$. In particular, it is $(<\omega)$-stable in $\beth_2({\chi})=\beth_{\alpha-1}(\lambda)=\beth_{\alpha-1}(\lambda)$, contradicting the fact that $M^*\in K$ is unstable in any cardinal $<\beth_{\alpha}(\lambda)$. 
\end{proof}
\begin{remark}
In our example, where exactly is the witness to the $(<\omega)$-order property of $\beth_{\gamma}(\lambda)$? Tracing the proofs, the key is the recursive construction in \ref{fsstlem2}, where a long splitting chain is utilized. Fix a cardinal $\chi\leq\beth_{\alpha-1}(\lambda)$. For our ${\bf K}$, we do not even need the $b_i$ and can simply set $\langle c_i,m_i:i<\chi\rangle$ such that all elements are distinct and $m_i$ contains exactly $\{c_j:j\leq i\}$. Then $\langle c_im_i:i<\chi\rangle$ witnesses the $2$-order property of length $\chi$, via the formula $\phi(x_1y_1;x_2y_2)\defeq x_1E y_2$. Therefore, we have an explicit example of the order property up to length $\beth_{\alpha-1}(\lambda)$ (the subscript cannot go further because most elements on the top rank do not belong to any other elements).
\end{remark}
We can conclude: 
\begin{corollary} 
\begin{enumerate}
\item For stable AECs, the Hanf number for the order property length is exactly $\beth_{(2^{\ls})^+}$.
\item For stable AECs, the Hanf number for the order property* length is exactly $\beth_{(2^{\ls})^+}$.
\item The Hanf number for stability is at least $\beth_{(2^{\ls})^+}$.  In other words, let $\lambda\geq\al$ and $\mu<\beth_{(2^\lambda)^+}$, there is a stable AEC ${\bf K}$ such that $\ls=\lambda$ and the first stability cardinal is greater than $\mu$.
\item With $\ls$-tameness and $AP$, the Hanf number for stability is at most $\beth_{(2^{\ls})^+}$. In other words, if ${\bf K}$ is a stable AEC with $AP$ and $\ls$-tameness, then the first stability cardinal is at most $\han$.
\end{enumerate}
\end{corollary}
\begin{proof}
\begin{enumerate}
\item Lower bound is by \ref{fsprom} and upper bound is by \ref{fsprop2}(3).
\item Lower bound is by \ref{fsprop} and upper bound is by \ref{fsprop2}(3).
\item By \ref{fsprop}.
\item By \ref{fscor2}(1).
\end{enumerate}
\end{proof}
We finish this section with the following question: are the bounds in (3) and (4) optimal?

\section{Syntactic splitting}\label{synspl}
We will give a syntactic proof to \ref{gvstab} using Galois Morleyization. The advantage is that types are syntactic and can be over sets of size less than $\ls$; the disadvantage is that we have extra assumptions. 
\begin{assum}
We assume the existence of a monster model $\mn$ ($AP+JEP+NMM$), where each set is inside some model in $K$ and each (set) embedding/isomorphism is extended by a ${\bf K}$-embedding/isomorphism. We also assume $AP$ over set bases: if $A\subseteq|M_1|\cap|M_2|$ and $M_1,M_2$ interpret $A$ in the same way, then there are $M_3\geq M_2$ and $f:M_1\xrightarrow[A]{}M_3$.
\end{assum}

\begin{definition}Let $\mu$ be an infinite cardinal, $A,B$ be sets in some models of $K$.
\begin{enumerate}
\item $B$ is \emph{universal} over $A$ if $A\subset B$ and for any $|B'|\leq|B|$, $B'\supseteq A$, there is $f:B'\xrightarrow[A]{}B$. We write $A\subset_u B$.
\item $B$ is \emph{$\mu$-universal} over $A$ if $B'$ in (1) must have size $\leq\mu$.
\item $B$ is \emph{$\mu$-homogeneous} if it is $<\mu$-universal over any $C\subset B$ of size $<\mu$.
\item Let $A\subseteq B\in K$, $p\in\gs(B)$. We say $p$ \emph{$\mu$-splits over $A$} if there exists $A\subseteq B_1,B_2\subseteq B$, $\nr{B_1}=\mu$, $f:B_1\cong_AB_2$ with $f(p)\restriction B_2\neq p\restriction B_2$. 
\end{enumerate}
\end{definition}
\cite[II 1.16(1)(a)]{shh} shows that universal \emph{models} of size $\mu$ exist if ${\bf K}$ is stable in $\mu\geq\ls$ (and has $\mu$-$AP$, $\mu$-$NMM$). 
We prove universal sets exist:
\begin{proposition}\mylabel{gvstabprop}{Proposition \thetheorem}
\begin{enumerate}
\item For any $A$, if ${\bf K}$ is stable in $|A|$, there is $|B|=|A|$, $B\supset_u A$. 
\item For any infinite $\lambda,\mu$ and any $|A|\leq\lambda$. If $\lambda^{<\mu}=\lambda$, then there is a $\mu$-homogeneous $B\supset A$ of size $\lambda$. 
\end{enumerate}
\end{proposition}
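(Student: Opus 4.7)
My plan for part~(1) is to reduce set-level universality over $A$ to the model-level existence of a universal extension \cite[II~1.16(1)(a)]{shh}, with the assumed $AP$ over set bases serving as the bridge. I would first apply Löwenheim--Skolem to obtain a model $M \supseteq A$ with $\nr{M} = |A|$ (assuming $|A| \geq \ls$; the smaller case is handled via the Morleyization of the next section and will be deferred). Stability in $|A|$ together with the cited result yields a model $B \supseteq M$ of size $|A|$ that is universal over $M$. To upgrade universality from $M$ to $A$: given $B' \supseteq A$ of size $\leq |A|$, close $B'$ to a model $M' \supseteq B'$ of size $|A|$; apply $AP$ over the set base $A$ to produce some $N \geq M$ together with $f \colon M' \to N$ fixing $A$ (which I may take to have size $|A|$ by Löwenheim--Skolem inside $\mn$); finally invoke universality of $B$ over $M$ to obtain $g \colon N \to B$ fixing $M$. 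The restriction of $g \circ f$ to $B'$ is the desired $A$-embedding $B' \to B$.

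My plan for part~(2) is a direct transfinite construction of length $\lambda$, relying only on $AP$ over set bases and the cardinal arithmetic $\lambda^{<\mu} = \lambda$; no stability is needed. Starting with $B_0 \supseteq A$ of size $\lambda$ (padding $A$ inside $\mn$ if necessary), I would build a continuous increasing chain $(B_\alpha)_{\alpha < \lambda}$ of sets with $|B_\alpha| \leq \lambda$. At each successor stage $\alpha+1$, I would address one embedding task: a pair $(C, C')$ with $C \subseteq B_\beta$ of size $<\mu$ for some $\beta \leq \alpha$ and $C' \supseteq C$ of size $<\mu$ in $\mn$. Applying $AP$ over the set base $C$ produces an ambient extension from which I can extract $B_{\alpha+1} \supseteq B_\alpha$ containing an isomorphic copy of $C'$ over $C$, trimmed so that $|B_{\alpha+1}| \leq \lambda$. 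At limits, take unions. The final $B \defeq \bigcup_{\alpha < \lambda} B_\alpha$ then has size $\lambda$, and each task being handled at some stage witnesses $\mu$-homogeneity.

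The hard part will be the bookkeeping for part~(2): one must enumerate in order type $\lambda$ all relevant pairs $(C, C')$, ensuring that whenever $C$ first appears in some $B_\beta$, every iso class of extension $C' \supseteq C$ of size $<\mu$ gets handled at some later stage. The hypothesis $\lambda^{<\mu} = \lambda$ bounds the number of subsets of each $B_\alpha$ of size $<\mu$ by $\lambda$, and only one representative per iso class of $C'$ (equivalently, per Galois type of length $<\mu$ over $C$) need be embedded. Arranging a single surjective coding of tasks to stages, using a standard diagonalization so that each eventual pair is scheduled at some $\alpha$ larger than the stage where $C$ appears, should complete the proof; pinning down the exact count of iso classes of small extensions without invoking stability is the most delicate step.
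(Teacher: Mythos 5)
Your reduction of part~(1) to Shelah's model-level universal extension does work when $|A|\geq\ls$: closing $B'$ to a model $M'$, pushing $M'$ into an extension of $M$ over the set base $A$, and then invoking universality of $B$ over $M$ is a correct transfer. But deferring the case $|A|<\ls$ ``to the Morleyization of the next section'' is not an argument, and that case is precisely what this section is built to handle (the paper explicitly works with stability and types over sets of size $<\ls$, and the proposition is later applied to such sets). The paper avoids the issue with a direct, uniform construction that never closes $A$ to a model: build an increasing continuous chain $\langle B_i:i<|A|\rangle$ of sets with $B_0=A$ and $B_{i+1}$ realizing every Galois type over $B_i$ (possible inside $\mn$ because stability in $|A|$ is over sets), and then embed an arbitrary $A'\supseteq A$ of size $\leq|A|$ into $B=\bigcup_i B_i$ one element at a time, using $AP$ over set bases at each successor step. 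You should either adopt that construction or actually supply the $|A|<\ls$ argument; as written this is a gap.

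For part~(2) you correctly isolate the hard point --- bounding the number of isomorphism classes of $<\mu$-sized extensions $C'$ of each $C\subseteq B_\alpha$ without any stability hypothesis --- but you do not resolve it, and it is your decision to embed each whole extension $C'$ in a single step that creates it. The paper's route is to build $B$ only to be (set) saturated, i.e.\ realizing all \emph{1-types} over its subsets of size $<\mu$, and then to deduce $\mu$-homogeneity from saturation by rerunning the element-by-element embedding argument of part~(1): enumerate $C'-C$ and extend a partial embedding over $C$ one point at a time, each successor step requiring only that a single 1-type over a $<\mu$-sized subset of $B$ be realized in $B$. This replaces your count of $\card{\gs^{<\mu}(C)}$-many tasks by $\card{\gs^1(C)}$-many, and leaves $\lambda^{<\mu}=\lambda$ as the only cardinal arithmetic needed to enumerate the relevant subsets $C$. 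Making that reduction explicit is the missing idea in your plan; without it the scheduling you describe cannot be carried out within sets of size $\lambda$.
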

\begin{proof}
\begin{enumerate}
\item Let $\mu\defeq|A|$. Build $\langle B_i:i<\mu\rangle$ increasing and continuous such that $B_0\defeq A$, $B_{i+1}\vDash\gs(B_i)$. For any $A'\supseteq A$, $|A'|\leq\mu$. We may assume $|A'-A|=\mu$ and write $A'=A\cup\{a_i:i<\mu\}$. Define $A_i\defeq A\cup\{a_j:j\leq i\}$ and $\langle f_i:A_i\xrightarrow[A]{}B_i:i\leq\mu\rangle$ increasing and continuous partial embeddings such that $f_i(a_i)\in B_i$. Set $f_{-1}\defeq\oop{id}_A$ and suppose $f_i$ has been constructed, obtain $\tilde{A'}$, an isomorphic copy of $A'$ over $\ran (f_i)$ and denote $\tilde{a}_{i+1}$ the copy of $a_{i+1}$. Now $B_{i+1}\vDash\gs(B_i)\supseteq\gs(\ran(f_i))$ so it realizes the type of $\tilde{a}_{i+1}$ over $\ran (f_i)$, say by $b_{i+1}$. By $AP$ there is $g:\tilde{a}_{i+1}\mapsto b_{i+1}$ fixing $\ran(f_i)$. Define $f_{i+1}(a_{i+1})\defeq b_{i+1}$. 
\begin{center}
\begin{tikzcd}
A' \arrow[r, "\cong"]                              & \tilde{A'}                                                     &                    & B \\
a_{i+1} \arrow[u] \arrow[rr, "\quad\qquad f_{i+1}", bend left] & \tilde{a}_{i+1} \arrow[u] \arrow[r, "g"]             & b_{i+1} \arrow[ru] &   \\
A_i \arrow[u, no head] \arrow[r, "f_i"]            & \operatorname{ran}(f_i) \arrow[ru, no head] \arrow[u, no head] &                    &   \\
A \arrow[u] \arrow[ru]                             &                                                                &                    &  
\end{tikzcd}
\end{center}
\item By an exhaustive argument, we can build a (set) saturated $B\supset A$. We check that it is $\mu$-homogeneous. Let $C\subset B$, $C'\supseteq C$ both of size $<\mu$, the argument from the previous item applies because $B$ is saturated and $|C'|<\mu$. 
\end{enumerate}
\end{proof}

We notice a correspondence between $\mu$-Galois splitting and $(<\mu^+)$-syntactic splitting (see \ref{syndef}); a similar treatment for coheir has already been done in \cite[Section 5]{s5}). We write $q$ \emph{$\mu$-syn-splits over $A$} to mean $q$ $(<\mu^+)$-syntactically splits over the quantifier-free formulas of $\hat{L}_{\mu^+,\mu^+}$ over $A$.
\begin{proposition}\mylabel{gsynspl}{Proposition \thetheorem}
Let $\mu$ be an infinite cardinal, ${\bf \hat{K}}$ be the $(<\mu^+)$-Galois Morleyization of ${\bf K}$. For any $A\subseteq B\in K$, $p\in\gs(B)$, $p$ $\mu$-splits over $A$ iff $p_{\mu^+}$ $\mu$-syn-splits over $A$.
\end{proposition}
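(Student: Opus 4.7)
The plan is to exploit two bridges between ${\bf K}$ and $\hat{\bf K}$. First, the definition of $\mu$-splitting forces $|A|\leq\nr{B_1}=\mu$; enumerating $A$ itself as a single parameter tuple of length $\leq\mu<\mu^+$ and using the atomic predicates $R_q$ of length $<\mu^+$, one sees that two tuples of length $<\mu^+$ share the same $(<\mu^+)$-syntactic type over $A$ iff they realize the same Galois type over $A$ (essentially \ref{gmprop} applied to the trivially tame setting of base sets of size $\leq\mu$, which requires no tameness hypothesis on ${\bf K}$). Second, any $K$-automorphism $F$ of $\mn$ preserves every predicate $R_q$, whose interpretation is fixed by the Galois type over $\emptyset$; so $F$ lifts to an $\hat L$-automorphism of $\hat\mn$ that transports every quantifier-free $\hat L_{\mu^+,\mu^+}$-formula together with its parameters. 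Together, these bridges let me shuttle splitting witnesses between the two sides.

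For the forward direction, I would fix witnesses $B_1,B_2\subseteq B$ with $f:B_1\cong_A B_2$ and $f(p\restriction B_1)\neq p\restriction B_2$, extend $f$ to a $K$-automorphism $F$ of $\mn$, pick $a\vDash p$, and set $a'\defeq F^{-1}(a)$. Then $\gtp(a/B_1)=p\restriction B_1$ while $\gtp(a'/B_1)=f^{-1}(p\restriction B_2)$, and these Galois types differ by hypothesis. By the first bridge (using $\nr{B_1}=\mu$) this inequality is already visible at the $(<\mu^+)$-syntactic level, so some quantifier-free $\psi(x;\bar b_1)$ with $\bar b_1\subseteq B_1$ holds of $a$ but fails of $a'$. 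Transporting $\hat\mn\vDash\neg\psi[a';\bar b_1]$ along $F$ gives $\hat\mn\vDash\neg\psi[a;f(\bar b_1)]$, so both $\psi(x;\bar b_1)$ and $\neg\psi(x;f(\bar b_1))$ lie in $p_{\mu^+}$. Since $f$ fixes $A$ pointwise, the first bridge also shows $\bar b_1$ and $f(\bar b_1)$ carry the same $(<\mu^+)$-syntactic type over $A$, which is precisely the syn-splitting witness.

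For the backward direction, I would assume $\phi(x;\bar b),\,\neg\phi(x;\bar c)\in p_{\mu^+}$ with $\bar b,\bar c$ realizing the same $(<\mu^+)$-syntactic type over $A$. The first bridge then yields $\gtp(\bar b/A)=\gtp(\bar c/A)$, so $AP$ over set bases produces a $K$-isomorphism $A\bar b\cong_A A\bar c$ sending $\bar b\mapsto\bar c$, which extends to an automorphism $F$ of $\mn$. I would close $A\bar b$ up to a submodel $B_1\subseteq B$ of size $\mu$ and set $B_2\defeq F(B_1)$, so that $f\defeq F\restriction B_1:B_1\cong_A B_2$. Letting $a\vDash p$ and applying $F$ to $\phi(x;\bar b)\in\tp_{\mu^+}(a/B_1)=(p\restriction B_1)_{\mu^+}$ gives $\phi(x;\bar c)\in (f(p\restriction B_1))_{\mu^+}$, while $\neg\phi(x;\bar c)\in(p\restriction B_2)_{\mu^+}$. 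The two $(<\mu^+)$-syntactic types over $B_2$ therefore differ, which forces $f(p\restriction B_1)\neq p\restriction B_2$ and establishes Galois $\mu$-splitting.

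The hard part will be the set-theoretic bookkeeping in the backward step: arranging $B_1$ to be a submodel of $B$ of size exactly $\mu$ containing $A\bar b$, and securing that the image $B_2=F(B_1)$ sits inside $B$. This will rely entirely on the monster-model and $AP$-over-set-bases assumptions declared at the start of Section~\ref{synspl}, which let one close sets up to models and extend partial isomorphisms freely inside $\mn$.
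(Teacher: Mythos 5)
Your proposal is correct and follows essentially the same route as the paper: both directions rest on the same two bridges (a single predicate $R_q$ of arity $\leq\mu$ codes the full Galois type over a base of size $\leq\mu$ without any tameness, and automorphisms of $\mn$ preserve the $R_q$'s), and your forward and backward arguments match the paper's almost step for step. The only difference is that your worry about arranging $B_2=F(B_1)\subseteq B$ dissolves once you note that the definition of $\mu$-splitting in this section is over \emph{sets}, so you may simply take $B_1=A\cup\ran(\bar b)$ and $B_2=A\cup\ran(\bar c)$, both of which already lie in $B$ because the parameters come from $p_{\mu^+}$.
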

\begin{proof}
Suppose $d\vDash p$ $\mu$-splits over $A$, obtain witness $f:B_1\cong_AB_2$ as above. Enumerate $A=a$ and $B_i$ as $b_i$. Since $f(p)\restriction B_2\neq p\restriction B_2$, $r_1\defeq\gtp(f(d)b_2/\emptyset)\neq\gtp(db_2/\emptyset)$, so $\hat{\mn}\vDash R_{r_1}[db_1]\wedge R_{r_1}[f(d)b_2]\wedge\neg R_{r_1}[db_2]$. $b_1$ and $b_2$ have the same syntactic type over $a$ because of $f$. Therefore, $p_{\mu^+}=\tp_{\mu^+}(d/B)\supseteq\tp_{\mu^+}(d/B_1\cup B_2)$ $\mu$-syn-splits over $A$.

Conversely, suppose $p_{\mu^+}$ $\mu$-syn-splits over $A$. There are $\phi(x;b_1),\neg\phi(x;b_2)$ in $p_{\mu^+}$ such that $b_1,b_2$ have the same syntactic type over $a$. Pick $d\vDash p_{\mu^+}$, then $\hat{\mn}\vDash\phi[d;b_1]\wedge\neg\phi[d;b_2]$, $\gtp(db_1/\emptyset)\neq\gtp(db_2/\emptyset)$ (actually $\phi$ might tell the exact Galois type of $db_1$). On the other hand, let $r=\gtp(b_1a/\emptyset)$ and consider $R_r(x;a)$. As $b_1,b_2$ have the same syntactic type over $A$, $\hat{\mn}\vDash R_r[b_1;a]\wedge R_r[b_2;a]$, which means $\gtp(b_2a/\emptyset)=r$. Thus there is $f:b_1\cong_ab_2$.
\end{proof}
In the above proof, we did not use tameness simply because $(<\mu^+)$-Galois Morleyization is already large enough to code all types over domains of size $\mu$. 
\begin{corollary}\mylabel{gsynsplcor}{Corollary \thetheorem}
Let $\mu$ be an infinite cardinal and ${\bf K}$ be $\mu$-tame. Let $A\subseteq B$ with $|A|\leq \mu$. Then any $p\in S(B)$ $(\geq\mu)$-splits over $A$ iff it $\mu$-splits over $A$.
\end{corollary}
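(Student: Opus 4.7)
The claim splits into two directions. The easy direction, that $\mu$-splitting implies $(\geq\mu)$-splitting, is immediate: a $\mu$-splitting witness $(B_1,B_2,f)$ with $\nr{B_1}=\mu$ is in particular a $\lambda$-sized witness for $\lambda=\mu\geq\mu$. So the work is all in the converse, and the plan is to compress an arbitrary $\lambda$-sized splitting witness down to size $\mu$ using $\mu$-tameness.

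Concretely, suppose $p\in\gs(B)$ $\lambda$-splits over $A$ for some $\lambda\geq\mu$, as witnessed by $A\subseteq B_1,B_2\subseteq B$, $\nr{B_1}=\lambda$, $f:B_1\cong_A B_2$, with $f(p)\restriction B_2\neq p\restriction B_2$. Both restrictions are Galois $1$-types over the set $B_2\subseteq B\in K$, so $\mu$-tameness supplies some $C\subseteq B_2$ with $|C|\leq\mu$ and $f(p)\restriction C\neq p\restriction C$. Since $|A|\leq\mu$, I would enlarge $C$ to $C\cup A$ without changing the inequality or breaking the bound $|C|\leq\mu$; then I can pull back to $C'\defeq f^{-1}(C)\subseteq B_1$, and because $f$ fixes $A$, the inclusion $A\subseteq C$ forces $A\subseteq C'$.

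Finally, I would pad $C'$ to a subset $B_1'\subseteq B_1$ of size exactly $\mu$ (possible since $\nr{B_1}=\lambda\geq\mu$), and define $B_2'\defeq f(B_1')\subseteq B_2$. The restriction $f\restriction B_1':B_1'\cong_A B_2'$ is an isomorphism over $A$, and since $C\subseteq B_2'$ already witnesses $f(p)\restriction C\neq p\restriction C$, the larger restrictions $f(p)\restriction B_2'$ and $p\restriction B_2'$ remain distinct. So $(B_1',B_2',f\restriction B_1')$ witnesses $\mu$-splitting of $p$ over $A$.

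The only delicate bookkeeping is ensuring that the shrunken $f$ still fixes $A$ pointwise and that the padding step does not destroy the splitting witness; the former is handled by absorbing $A$ into $C$ before pulling back, and the latter is automatic because enlarging $B_2'\supseteq C$ only enlarges both Galois types that are being compared. Note that tameness is only invoked once, at the very first compression step, which is why the Galois Morleyization correspondence of \ref{gsynspl} is not needed in the proof; alternatively one could route the same argument through the syntactic side by observing that $(<\mu^+)$-Galois Morleyization already codes Galois types over sets of size $\leq\mu$.
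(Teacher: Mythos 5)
Your proof is correct, but it reaches the conclusion by a genuinely different route than the paper. The paper handles the nontrivial direction by passing through the $(<\mu^+)$-Galois Morleyization: it applies \ref{gmprop} to replace the Galois inequality $f(p)\restriction B_2\neq p\restriction B_2$ with a quantifier-free $\hat{L}_{\mu^+,\mu^+}$-formula separating the two types, observes that such a formula can only mention $\leq\mu$ parameters (this is where the compression happens), concludes that $p_{\mu^+}$ $\mu$-syn-splits over $A$, and then runs the converse direction of \ref{gsynspl} to translate the syntactic splitting back into a Galois $\mu$-splitting witness, reconstructing an isomorphism from syntactic type equality via the $R_r$ predicates. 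You achieve the same compression by applying $\mu$-tameness directly to the two Galois types over the set $B_2$ and then restricting the original map $f$, never leaving the Galois side; this is legitimate here because Definition \ref{tame2}(2) states tameness for types over arbitrary sets, which is also what the paper's own appeal to \ref{gmprop} implicitly relies on. Your bookkeeping is sound: absorbing $A$ into $C$ before pulling back guarantees $A\subseteq B_1'\cap B_2'$ and that $f\restriction B_1'$ still fixes $A$; padding to size exactly $\mu$ matches the requirement $\nr{B_1'}=\mu$ in the definition of $\mu$-splitting; and enlarging the comparison set from $C$ to $B_2'$ only preserves the inequality of the two restricted types. The trade-off is that your argument is more elementary and keeps the original witness $f$, while the paper's detour fits the corollary into the Galois/syntactic splitting dictionary of \ref{gsynspl} on which the rest of Section \ref{synspl} is built.
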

\begin{proof}
We adopt the previous proof: suppose $d\vDash p$ $(\geq\mu)$-splits over $A$, obtain witness $f:B_1\cong_AB_2$ as above. Enumerate $A=a$ and $B_i$ as $b_i$. Since $f(p)\restriction B_2\neq p\restriction B_2$, $\gtp(f(d)b_2/\emptyset)\neq\gtp(db_2/\emptyset)$. By \ref{gmprop}, there is a quantifier-free formula $\phi$ in $\hat{L}_{\mu^+,\mu^+}$ so that $\hat{\mn}\vDash \phi[db_1]\wedge \phi[f(d)b_2]\wedge\neg \phi[db_2]$. $b_1$ and $b_2$ have the same syntactic type over $a$ because of $f$. Therefore, $p_{\mu^+}=\tp_{\mu^+}(d/B)\supseteq\tp_{\mu^+}(d/B_1\cup B_2)$ $\mu$-syn-splits over $A$.
This implies $p$ $\mu$-splits over $A$ by the second paragraph of the previous proof.
\end{proof}
We now prove a series of results syntactically. The original proofs in \cite[Theorems I 4.10,4.12]{van06}, \cite[Section 6]{GV06b} are semantic. 
\begin{lemma}\mylabel{gvstablem0}{Lemma \thetheorem}
Let $\mu$ be an infinite cardinal and ${\bf K}$ be $\mu$-tame. For any $B\subseteq C$ both of size $\geq\mu$ and $p\in\gs(C)$, we have $p_{\mu^+}\restriction B=(p\restriction B)_{\mu^+}$.
\end{lemma}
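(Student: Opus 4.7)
The plan is to reduce both sides of the equality to $\tp_{\mu^+}(d/B;\hat{\mn})$ for a single chosen realization $d$ of $p$, so that the lemma becomes a definitional check once one unwinds the assignment $q \mapsto q_{\mu^+}$. First I would fix $d \vDash p$ inside the monster $\mn$ and use the construction recorded in \ref{gmprop} to identify $p_{\mu^+}$ with $\tp_{\mu^+}(d/C;\hat{\mn})$. Since every quantifier-free $\hat{L}_{\mu^+,\mu^+}$-formula has fewer than $\mu^+$ parameters, restricting this syntactic type to $B$ leaves exactly the formulas $\phi(x;\vec b)$ with $\vec b$ drawn from $B$, giving $p_{\mu^+}\restriction B = \tp_{\mu^+}(d/B;\hat{\mn})$. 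The same $d$ realizes the Galois type $p\restriction B$, so by the same map $(p\restriction B)_{\mu^+} = \tp_{\mu^+}(d/B;\hat{\mn})$, and both sides coincide.

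The one place I would be careful is in the well-definedness of the syntactic type attached to a Galois type: if $d'$ is another realization of $p \restriction B$, then $\tp_{\mu^+}(d'/B;\hat{\mn})$ must equal $\tp_{\mu^+}(d/B;\hat{\mn})$. This holds because each predicate $R_r$ of the Morleyization is defined through a Galois type over $\emptyset$, hence is preserved by any ${\bf K}$-automorphism of $\mn$ fixing $B$ pointwise; under the monster model assumption in effect for this section, $d$ and $d'$ are conjugate by such an automorphism, so they share every quantifier-free $\hat{L}_{\mu^+,\mu^+}$-formula over $B$. Thus no real obstacle arises in the proof itself. The hypothesis of $\mu$-tameness is not strictly used in the verification; its role, via \ref{gmprop}, is rather to upgrade $q \mapsto q_{\mu^+}$ to a bijection on types over each parameter set, which is what will make this compatibility statement useful in the subsequent syntactic splitting arguments (in particular in \ref{gsynsplcor} and what follows).
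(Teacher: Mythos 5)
Your proposal is correct and follows essentially the same route as the paper: fix $d\vDash p$ and observe that $p_{\mu^+}\restriction B=\tp_{\mu^+}(d/C)\restriction B=\tp_{\mu^+}(d/B)=(p\restriction B)_{\mu^+}$ since the same $d$ realizes $p\restriction B$. Your added remarks on well-definedness of $q\mapsto q_{\mu^+}$ and the non-use of tameness are sound but go beyond what the paper records.
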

\begin{proof}Let $d\vDash p$,
\begin{align*}
p_{\mu^+}\restriction B&=\tp_{\mu^+}(d/C)\restriction B\\
&=\tp_{\mu^+}(d/B)\\
&=(p\restriction B)_{\mu^+}\text{ because $d\vDash p\restriction B$}
\end{align*}
\end{proof}
\begin{lemma}\mylabel{gvstablem1}{Lemma \thetheorem}
Let $\mu$ be an infinite cardinal, $A\subset_u B\subseteq C$ all of size $\mu$ and $p,q\in \gs(C)$. Suppose $p,q$ do not $\mu$-split over $A$ and $p\restriction B=q\restriction B$. Then $p=q$. We can allow $|B|,|C|\geq\mu$ if we assume $\mu$-tameness.
\end{lemma}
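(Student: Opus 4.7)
The heart of the argument is to use the universality of $B$ over $A$ to embed (a relevant piece of) $C$ into $B$, then use non-$\mu$-splitting of $p$ and $q$ to transport types back. Concretely, I would argue as follows.

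First consider the base case $|C|=\mu$. Since $A\subset_u B$ and $C\supseteq A$ has size $\leq\mu=\nr{B}$, universality gives an $A$-embedding $f:C\hookrightarrow B$. Write $C'\defeq f(C)\subseteq B\subseteq C$, so $f:C\cong_A C'$ with $\card{C}=\card{C'}=\mu$. Applying non-$\mu$-splitting of $p$ over $A$ with $B_1\defeq C$ and $B_2\defeq C'$ yields $f(p)\restriction C'=p\restriction C'$, and the same for $q$ gives $f(q)\restriction C'=q\restriction C'$. But $C'\subseteq B$ and $p\restriction B=q\restriction B$, so $p\restriction C'=q\restriction C'$. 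Combining, $f(p)=f(q)$ as types over $C'$, and applying $f^{-1}$ (extended to a ${\bf K}$-automorphism via $AP$) gives $p=q$.

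For the general case $\nr{C}\geq\mu$ assuming $\mu$-tameness, suppose toward contradiction that $p\neq q$. By $\mu$-tameness there is $C_0\subseteq C$ with $\card{C_0}\leq\mu$ and $p\restriction C_0\neq q\restriction C_0$. Pad $C_0\cup A$ by arbitrary elements of $C$ to a set $C_1\subseteq C$ of size exactly $\mu$ containing $A$ and $C_0$. Apply universality as above to get an $A$-embedding $f:C_1\hookrightarrow B$ with $C_1'\defeq f(C_1)\subseteq B$. Non-$\mu$-splitting of $p$ over $A$ (with $B_1\defeq C_1$, $B_2\defeq C_1'$) gives $f(p\restriction C_1)\restriction C_1'=p\restriction C_1'$, and similarly for $q$. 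Since $C_1'\subseteq B$, we have $p\restriction C_1'=q\restriction C_1'$, hence $f(p\restriction C_1)=f(q\restriction C_1)$, so $p\restriction C_1=q\restriction C_1$. This contradicts $p\restriction C_0\neq q\restriction C_0$ because $C_0\subseteq C_1$.

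The subtlety is simply ensuring the set sizes and inclusions match the non-$\mu$-splitting definition (requiring $\card{B_1}=\mu$ exactly with $A\subseteq B_1,B_2\subseteq C$), and legitimately transferring from the set isomorphism $f$ to an ambient automorphism that lets us push forward Galois types; both are handled by $AP$ over set bases together with the padding step. Once the bookkeeping is arranged, the argument is essentially a single application of universality followed by non-splitting on both $p$ and $q$.
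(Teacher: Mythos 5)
Your proof is correct, but it takes a genuinely different route from the paper's. You give the classical \emph{semantic} argument: use universality of $B$ over $A$ to embed $C$ (or, under tameness, a $\mu$-sized $C_1\supseteq A$ containing a separating set $C_0$) into $B$ over $A$, then apply non-$\mu$-splitting of $p$ and $q$ with $B_1\defeq C_1$, $B_2\defeq f(C_1)\subseteq B$ to identify $f(p\restriction C_1)$ with $p\restriction f(C_1)$ and likewise for $q$, and conclude from $p\restriction B=q\restriction B$. The bookkeeping you flag (that $\nr{B_1}=\mu$ exactly, that $A\subseteq B_1,B_2\subseteq C$, and that the set isomorphism extends to an ambient automorphism) is indeed all that needs checking, and it is covered by the section's standing assumption of a monster model with $AP$ over set bases; so the argument goes through. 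The paper instead proves the lemma \emph{syntactically}: it passes to the $(<\mu^+)$-Galois Morleyization, converts non-$\mu$-splitting into non-$\mu$-syn-splitting via \ref{gsynspl} (and \ref{gsynsplcor} plus $\mu$-tameness in the general case), picks a quantifier-free formula $\phi(x;d)\in p_{\mu^+}-q_{\mu^+}$, uses $A\subset_u B$ to find $b\subseteq B$ with $\tp_{\mu^+}(b/A)=\tp_{\mu^+}(d/A)$, and derives $\phi(x;b)\in p_{\mu^+}-q_{\mu^+}$, contradicting agreement on $B$ (via \ref{gmprop} and \ref{gvstablem0}). Your version is more elementary and avoids the Morleyization machinery entirely; the paper's version is deliberately syntactic, since the stated purpose of Section \ref{synspl} is to reprove these facts through Galois Morleyization so that types may be taken over sets of size below $\ls$ and so the lemma meshes with the surrounding syntactic apparatus. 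Substantively both are essentially the same non-splitting-plus-universality idea, packaged at different levels.
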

\begin{proof}
We comment the case $|C|>\mu$ in square brackets. Let ${\bf \hat{K}}$ be the $(<\mu^+)$-Galois Morleyization of ${\bf K}$. Since $p,q$ do not $\mu$-split over $A$, \ref{gsynspl} shows that $p_{\mu^+},q_{\mu^+}$ do not $\mu$-syn-split over $A$ [use $\mu$-tameness and \ref{gsynsplcor}]. Suppose $p_{\mu^+}\neq q_{\mu^+}$, [by $\mu$-tameness] there is $d\subseteq C$ of size $\mu$, $\phi(x;y)$ such that $\phi(x;d)\in p_{\mu^+}-q_{\mu^+}$. As $B\supset_u A$, we may pick $b\vDash\tp_{\mu^+}(d/A)$. By non-syn-splitting, $\phi(x;b)\in p_{\mu^+}-q_{\mu^+}$ contradicting $p_{\mu^+}\restriction B=q_{\mu^+}\restriction B$ [use \ref{gmprop} and \ref{gvstablem0}].
\end{proof}

Extension also holds but it is applicable to $\mu$-sized models.
\begin{lemma}\mylabel{gvstablem2}{Lemma \thetheorem}
Let $\mu$ be an infinite cardinal, $A\subset_u B\subseteq C$ all of size $\mu$. Let $p\in\gs(B)$ do not $\mu$-split over $A$. Then there is $q\in\gs(C)$ extending $p$ and does not $\mu$-split over $A$. Also, if $p$ is non-algebraic, we can have $q$ non-algebraic. 
\end{lemma}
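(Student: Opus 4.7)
The plan is to use $\mu$-universality of $B$ over $A$ to fold $C$ into $B$, and then transport $p$ back out with a monster-model automorphism. Since $A\subset_uB$ and $|C|=\mu=|B|$, pick $f:C\xrightarrow[A]{}B$. The set isomorphism $f^{-1}:f[C]\to C$ fixes $A$, and by the monster-model assumption extends to an automorphism $h$ of $\mn$ with $h\restriction A=\oop{id}_A$. Since $f[C]\subseteq B$, we have $C=h[f[C]]\subseteq h[B]$, so $B\subseteq C\subseteq h[B]$. Define $q\defeq h(p)\restriction C\in\gs(C)$.

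I would then verify two things. First, that $q$ does not $\mu$-split over $A$: invariance of non-$\mu$-splitting under the $A$-fixing automorphism $h$ (which follows, e.g., from \ref{gsynspl} together with the automorphism-invariance of syntactic splitting) gives that $h(p)$ does not $\mu$-split over $A$; and restrictions of non-splitting types to an intermediate base remain non-splitting, since any set-witness $B_1, B_2\subseteq C$ for $\mu$-splitting of $q$ would be a witness for $h(p)$ as well. Second, that $q$ extends $p$: both $q\restriction B$ and $p$ are types in $\gs(B)$ that do not $\mu$-split over $A$ and that restrict to the same type on $A$ (since $h$ pointwise fixes $A$, Galois types over $A$ are $h$-invariant). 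The uniqueness lemma \ref{gvstablem1} together with $A\subset_uB$ then forces $q\restriction B=p$.

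For the non-algebraic clause, if $d\vDash p$ with $d\notin B$, then $h(d)\vDash h(p)$, and since $h$ is a bijection with $h[B]\supseteq C$, we get $h(d)\notin C$, so $q$ is non-algebraic. The only real content is the appeal to \ref{gvstablem1} (which is where universality of $B$ over $A$ is crucial: without it, two non-splitting extensions of the same type over $A$ could genuinely disagree); the remaining steps are routine monster-model bookkeeping, so no serious obstacle is expected.
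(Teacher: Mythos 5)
Your construction of $q$ is essentially the paper's: fold $C$ into $B$ via an $A$-embedding $f:C\xrightarrow[A]{}B$ given by universality, and transport $p$ back out (the paper writes down $q_{\mu^+}$ formula-by-formula via $c\mapsto b_c=f(c)$; your automorphism $h\supseteq f^{-1}$ produces the same type $q=h(p)\restriction C$). The verification that $q$ does not $\mu$-split over $A$ is fine, and so is the non-algebraicity clause (in $\mn$, a type realized by an element outside $C$ cannot also be the type of an element of $C$). The genuine gap is in the step ``$q$ extends $p$''. You invoke \ref{gvstablem1} for the two types $q\restriction B$ and $p$ on the grounds that they agree over $A$, but \ref{gvstablem1} does not apply there: its hypothesis is that the two non-splitting types agree on a set $B'$ with $A\subset_u B'$, i.e.\ on a \emph{universal} extension of the base, and agreement over $A$ alone is exactly the situation in which uniqueness of non-splitting extensions can fail. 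The only candidate for such a $B'$ inside $B$ is $B$ itself, and agreement there is what you are trying to prove, so the appeal is circular.

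The step can be repaired, and the repair is where the hypothesis that $p$ does not $\mu$-split over $A$ actually does its work (in your write-up that hypothesis is only used to transfer non-splitting to $h(p)$, which is a warning sign). Since $f[B]\subseteq f[C]\subseteq B$ and $f\restriction B:B\cong_A f[B]$ with $A\subseteq B,\,f[B]\subseteq B$, non-$\mu$-splitting of $p$ over $A$ applied to this very isomorphism gives $f(p)=p\restriction f[B]$. Applying $h$ and using $h\circ f\restriction B=\oop{id}_B$ yields
$$q\restriction B=h(p)\restriction h[f[B]]=h\big(p\restriction f[B]\big)=h(f(p))=p.$$
Equivalently, in the paper's syntactic formulation: for $c\in B$ the parameters $c$ and $f(c)$ realize the same type over $A$, so non-syn-splitting of $p_{\mu^+}$ (via \ref{gsynspl}) forces $\phi(x;f(c))\in p_{\mu^+}$ iff $\phi(x;c)\in p_{\mu^+}$, which is precisely the coherence of $q_{\mu^+}$ with $p_{\mu^+}$ on $B$; this is how the paper's recipe makes the extension property automatic.
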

\begin{proof}
Let ${\bf \hat{K}}$ be the $(<\mu^+)$-Galois Morleyization of ${\bf K}$. First we decide whether $\pm\phi(x;c)\in q_{\mu^+}$ for each quantifier-free formula in $\hat{L}_{\mu^+,\mu^+}$ over $C$. Since $A\subset_u B$, there is a copy $C'\subset B$ of $C$. We want $q$ does not $\mu$-split over $A$, by \ref{gsynspl} we must set $\phi(x;c)\in q_{\mu^+}$ iff $\phi(x;b_c)\in p_{\mu^+}$ where $b_c\in C'$ and $b_c\vDash\tp_{\mu^+}(c/A)$. Such $q_{\mu^+}$ is realized because $p_{\mu^+}$, and thus $p_{\mu^+}\restriction C'\cong_A q_{\mu^+}$ is realized where $C'\subset B$ is the copy of $C$. 

If $p$ is non-algebraic and we can modify the argument above by extending $C$ to a copy of $B$. Then $p_{\mu^+}$ is realized/algebraic iff $q_{\mu^+}$ is.
\begin{center}
\begin{tikzcd}
p&q\\
B \arrow[r]\arrow[u, no head, dotted]  & C\ni c \arrow[ld, "\cong", no head]\arrow[u, no head, dotted] \\
b_c\in C' \arrow[u] &                                \\
A \arrow[u]  &                               
\end{tikzcd}
\end{center}
\end{proof}

\begin{lemma}\mylabel{gvstablem3}{Lemma \thetheorem}
Let $\mu$ be an infinite cardinal, ${\bf K}$ be $\mu$-tame and stable in $\mu$. For any $|A|\leq\mu$, $A\subset_u C$, $$\chi\defeq\card{\{p\in\gs(C):p\text{ does not $\geq\mu$-split over $A$}\}}\leq\mu$$
\end{lemma}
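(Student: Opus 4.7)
The plan is to inject the set of non-$\geq\mu$-splitting types over $C$ into $\gs(B)$ for a suitable size-$\mu$ universal subset $B\subseteq C$, and then to bound $|\gs(B)|$ by stability.

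First, I would build $B\subseteq C$ with $A\subset_u B$ and $|B|=\mu$. In the main case $|A|=\mu$, stability in $\mu$ together with \ref{gvstabprop}(1) yields a set $B^\ast\supset_u A$ with $|B^\ast|=\mu$; since $A\subset_u C$ and $|B^\ast|\leq|C|$, there is an embedding $f:B^\ast\xrightarrow[A]{}C$, and $B\defeq f[B^\ast]$ is the desired subset of $C$. When $|A|<\mu$, I would first pad $A$ inside $C$ to some $A'\supseteq A$ of size $\mu$, apply the same construction with $A'$ in place of $A$, and then use $AP$ over set bases (available under the standing assumption of this section) to check that universality of the resulting $B$ over $A'$ implies universality over $A$: any size-$\mu$ extension $D$ of $A$ amalgamates with $A'$ over $A$ to produce some $D'\supseteq A'$ of size $\mu$, which then embeds into $B$ over $A'$, hence over $A$.

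Next, by $\mu$-tameness and \ref{gsynsplcor}, a type $p\in\gs(C)$ does not $\geq\mu$-split over $A$ if and only if it does not $\mu$-split over $A$. For two such types $p,q\in\gs(C)$ with $p\restriction B=q\restriction B$, the $\mu$-tame version of \ref{gvstablem1} (applied with $A\subset_u B\subseteq C$) forces $p=q$. Hence the restriction map $p\mapsto p\restriction B$ is injective on our counting set, so $\chi\leq|\gs(B)|\leq\mu$ by stability in $\mu$ applied to the size-$\mu$ set $B$.

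The main obstacle is the case $|A|<\mu$: since \ref{gvstabprop}(1) only produces universal extensions over sets whose cardinality equals the stability cardinal, extending to $A'$ of size $\mu$ and then recovering universality of the resulting $B$ over the original (smaller) base $A$ requires the amalgamation step described above rather than a direct appeal to \ref{gvstabprop}(1). Apart from that, the proof is a routine restriction-and-count argument: find the size-$\mu$ universal $B\subseteq C$, use \ref{gvstablem1} for injectivity, and apply stability to bound $|\gs(B)|$.
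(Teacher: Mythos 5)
Your proof is correct and follows essentially the same route as the paper: pick a size-$\mu$ set $B$ with $A\subset_u B\subseteq C$, use \ref{gsynsplcor} and \ref{gvstablem1} to make restriction to $B$ injective on the non-splitting types, and bound $|\gs(B)|$ by stability. The only differences are that you supply more detail on constructing $B$ inside $C$ (including the $|A|<\mu$ case, which the paper glosses over) and that you omit the appeal to \ref{gvstablem2}, which the paper uses only to upgrade the inequality to an equality and is not needed for the stated bound.
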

\begin{proof}
Pick $B$ of size $\mu$ with $A\subset_u B\subseteq C$. By \ref{gsynsplcor}, \ref{gvstablem1} and \ref{gvstablem2}, $\chi=\card{\{p\in\gs(B):p\text{ does not $\mu$-split over $A$}\}}\leq|\gs(B)|\leq\mu$.
\end{proof}
The following originates from \cite[Claim 3.3]{sh394} and is extended to longer types in \cite[Fact 4.6]{GV06b}.
\begin{lemma}\mylabel{gvstablem4}{Lemma \thetheorem}
Let $\mu$ be an infinite cardinal and ${\bf K}$ be stable in $\mu$. For any $p\in\gs(B)$, there is $A\subseteq B$ of size $\mu$ such that $p$ does not $\mu$-splits over $A$.
\end{lemma}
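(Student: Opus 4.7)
The plan is a proof by contradiction that constructs a ``fat'' tree of splitting witnesses inside $B$ to produce more than $\mu$ distinct Galois types over some fixed $A^*\subseteq B$ of size $\mu$, contradicting stability in $\mu$.

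Suppose for contradiction that $p\in\gs(B)$ $\mu$-splits over every $A\subseteq B$ of size $\leq\mu$. First, build an increasing continuous chain $\langle A_i:i<\mu\rangle$ of subsets of $B$ with $|A_i|\leq\mu$, starting from any $A_0\subseteq B$ of size $\mu$. At stage $i$, the assumption yields witnesses $B_i^1,B_i^2\subseteq B$ of cardinality $\mu$ with $A_i\subseteq B_i^1\cap B_i^2$, together with $f_i:B_i^1\cong_{A_i}B_i^2$ satisfying $f_i(p\restriction B_i^1)\neq p\restriction B_i^2$. Set $A_{i+1}\defeq A_i\cup B_i^1\cup B_i^2$, take unions at limits, and let $A^*\defeq\bigcup_{i<\mu}A_i\subseteq B$, which has size $\mu$.

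Fix $d\vDash p$ in the monster $\mn$. For each $i$, extend $f_i$ to $\bar{f}_i\in\oop{Aut}(\mn/A_i)$, and let $d_i\defeq\bar{f}_i(d)$; then $d_i$ and $d$ realize the same Galois type over $A_i$ but distinct types over $B_i^2$. Passing to the $(<\mu^+)$-Galois Morleyization via \ref{gsynspl}, this syntactic splitting is witnessed by a quantifier-free formula $\phi_i(x;y)$ of $\hat{L}_{\mu^+,\mu^+}$ with parameters from $A_{i+1}$ that separates $\tp_{\mu^+}(d/A_{i+1})$ from $\tp_{\mu^+}(d_i/A_{i+1})$.

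Now for each $\eta\in 2^\mu$, I would construct an element $d_\eta\in\mn$ whose Galois type over $A^*$ agrees, on each ``fresh slice'' $B_i^2\setminus A_i$, with that of $d$ if $\eta(i)=0$ and with that of $d_i$ if $\eta(i)=1$. Since both prescribed restrictions extend $p\restriction A_i$, and since the splitting witnesses can be chosen at each stage so that their $A_i$-complements are mutually ``independent'' (by selecting $B_i^1,B_i^2$ fresh over the previous stages and invoking the monster's strong homogeneity to amalgamate the pieces), the partial syntactic type that prescribes these pieces is consistent and therefore realized in $\mn$. For distinct $\eta\neq\eta'\in 2^\mu$ differing at some coordinate $i$, $\gtp(d_\eta/A^*)\neq\gtp(d_{\eta'}/A^*)$ because they disagree on $B_i^2\subseteq A^*$. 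Hence $\eta\mapsto\gtp(d_\eta/A^*)$ has image of size $2^\mu>\mu$, contradicting $|\gs(A^*)|\leq\mu$ from stability in $\mu$.

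The main obstacle will be the last step: simultaneously realizing all $2^\mu$ ``mixtures'' of $d$ and the $\bar{f}_i(d)$'s in $\mn$. In the first-order setting this is handled by free amalgamation or the definability of stable types; in the AEC setting one must carefully control the chain $\langle A_i\rangle$ so that the splitting data at distinct stages are sufficiently disjoint (each $\bar{f}_i$ moving only the new portion $B_i^1\setminus A_i$), and then use the monster's back-and-forth property to amalgamate the prescribed partial types on the $B_i^2$'s into a coherent syntactic type over $A^*$. Once this is pushed through, the counting contradicts stability in $\mu$ and the lemma follows.
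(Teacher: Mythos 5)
You have the right contradiction setup and the right target (too many types over a set of size $\mu$ contradicts stability), but the construction you propose does not close, and the step you defer to the end is precisely the step that fails. Along a \emph{linear} chain $\langle A_i:i<\mu\rangle$ you obtain, for each $i$, two extensions of $\gtp(d/A_i)$ to $B_i^2$, namely $\gtp(d/B_i^2)$ and $\gtp(d_i/B_i^2)$; but there is no reason an arbitrary $\eta\in2^\mu$-indexed ``mixture'' of these restrictions is simultaneously realizable. In an AEC there is no compactness, no free amalgamation, and no definability of types to glue the slices together; the monster's homogeneity only transports types that are \emph{already} realized by some tuple, it does not manufacture a realization of a family of pairwise-compatible partial prescriptions. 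Choosing the $B_i^1,B_i^2$ ``fresh'' or ``independent'' over earlier stages is not something the splitting hypothesis gives you, and without it the family $\{q_\eta\}$ need not contain a single consistent member beyond the two branches realized by $d$ and by the individual $d_i$'s.

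The paper avoids this entirely by replacing the linear chain with a binary \emph{tree} of height $\kappa$, where $\kappa\leq\mu$ is least with $2^\kappa>\mu$. At a node $\eta$ one splits over a set $A_{l(\eta)}$ that already contains all parameters chosen along the path to $\eta$, obtaining $\phi_\eta(x;a_{\eta^\frown0}),\neg\phi_\eta(x;a_{\eta^\frown1})\in p_{\mu^+}$ and a map $f_\eta:a_{\eta^\frown0}\mapsto a_{\eta^\frown1}$ fixing $A_{l(\eta)}$. For a branch $\nu\in2^\kappa$ the composition $g_\nu$ of the maps $f_{\nu\restriction\alpha}^{\nu[\alpha]}$ is coherent because each later map fixes everything chosen earlier, and $g_\nu(d)$ realizes the branch type $q_\nu$ outright --- the realization is an automorphic image of $d$, so no consistency or amalgamation argument is ever needed. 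The $2^\kappa>\mu$ branch types are pairwise contradictory over $\bigcup_{\alpha<\kappa}A_\alpha$, which has size $\mu$, giving the contradiction with stability. To repair your write-up you would have to reorganize your construction into this tree form (splitting at each node over the accumulated parameter set, not just over $A_i$), at which point the ``main obstacle'' you flag disappears.
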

\begin{proof}
Suppose the lemma is false, we can find $d\vDash p\in\gs(B)$, $B$ of size $>\mu$ such that $p$ $\mu$-splits over all $A$ of size $\mu$. Let ${\bf \hat{K}}$ be the $(<\mu^+)$-Galois Morleyization of ${\bf K}$. By \ref{gsynspl}, $p_{\mu^+}$ $\mu$-syn-splits over all $A$ of size $\mu$. Pick any $A_0\subset B$ of size $\mu$ and choose minimum $\kappa\leq\mu$ with $2^\kappa>\mu$. By assumption we can build $\langle a_{\eta^\frown 0},a_{\eta^\frown 1},\phi_\eta,f_\eta:\eta\in2^{<\kappa}-\{\langle\rangle\}\rangle$ and $\langle A_\alpha:\alpha<\kappa\rangle$ such that 
\begin{enumerate}
\item $\langle A_\alpha:\alpha<\kappa\rangle$ is increasing and continuous. For $\alpha<\kappa$, $A_\alpha\subset B$ has size $\mu$.
\item For $\eta\in2^{<\kappa}$,  $a_\eta\in A_{l(\eta)}$, $f_\eta:a_{\eta^\frown0}\mapsto a_{\eta^\frown1}$ and $f_\eta$ fixes $A_{l(\eta)}$.
\item For $\eta\in2^{<\kappa}$, $\phi_\eta(x;a_{\eta^\frown 0}),\neg\phi_\eta(x;a_{\eta\frown1})\in p_{\mu^+}$
\end{enumerate}
\begin{center}
\begin{tikzcd}
A_2 & \phi_0(x;a_{00}) \arrow[rr, "f_0","A_1"'] &                                       & \phi_0(x;a_{01}) & \phi_1(x;a_{10}) \arrow[rr, "f_1","A_1"'] &     & \phi_1(x;a_{11}) \\
A_1 &                                    & a_0 \arrow[rrr, "f_{\langle\rangle}","A_0"'] &                  &                                    & a_1 &             \\     
\end{tikzcd}
\end{center}

For $\nu\in 2^\kappa$, define $q_\nu\defeq\{\phi_\eta(x;a_{\eta^\frown i}):\eta\in2^{<\kappa},\eta^\frown i\sqsubseteq \nu;i=1,2\}\cup\{\neg\phi_\eta(x;a_{\eta^\frown1}):\eta\in2^{<\kappa},\eta^\frown0\sqsubseteq\nu\}$. $\langle q_\nu:\nu\in 2^\kappa\rangle$ are obviously pairwise contradictory. It remains to show each of them is realized. For any $\nu\in2^\kappa$, define $g_\nu$ to be the composition of $\langle f_{\nu\restriction\alpha}^{\nu[\alpha]}:\alpha<\kappa\rangle$ where each $f_\eta^0\defeq\oop{id}$ and $f_\eta^1\defeq f_\eta$. $g_\nu$ is well-defined because for $\eta\in2^{<\kappa}$, $f_\eta$ fixes $A_{l(\eta)}$. Extend $g_\nu$ to an isomorphism containing $d$. Inductively we can show $g_\nu(d)\vDash q_\nu$: let $\alpha<\kappa$. If $\nu[\alpha]=0$, then item (3) in the construction guarantees $g(p_\mu^+)\supseteq g_\nu(\{\phi_{\nu\restriction\alpha}(x;a_{{\nu\restriction\alpha}^\frown 0}),\neg\phi_{\nu\restriction\alpha}(x;a_{{\nu\restriction\alpha}\frown1})\})=\{\phi_{\nu\restriction\alpha}(x;a_{{\nu\restriction\alpha}^\frown 0}),\neg\phi_{\nu\restriction\alpha}(x;a_{{\nu\restriction\alpha}\frown1}\})$. If $\nu[\alpha]=1$, then $g(p_\mu^+)\supseteq g_\nu(\{\phi_{\nu\restriction\alpha}(x;a_{{\nu\restriction\alpha}^\frown 0})\})=\{\phi_{\nu\restriction\alpha}(x;a_{{\nu\restriction\alpha}^\frown 1})\}$.
\end{proof}
\begin{proof}[Proof of \ref{gvstab}]
Let $C$ be of size $\lambda$. By an exhaustive argument, we may extend $C$ to be $\mu^+$-saturated. Let $A\subset C$ of size $\mu$. By \ref{gvstabprop}(1) and (2), we can build $B$ of size $\mu$ such that $A\subset_uB\subset C$. Thus \ref{gvstablem3} applies. Also by \ref{gvstablem4}, each $p\in \gs(C)$ does not $\mu$-split over some $A_p\subset C$ of size $\mu$. There are at most $\lambda^\mu$ of such $A_p$. Thus$$
\card{\gs(C)}=\big|\bigcup\{p\in\gs(C):p\text{ does not $\mu$-split over $A_p$}\}\big|
\leq\lambda^\mu\cdot\mu=\lambda$$
\end{proof}
\begin{corollary}
If ${\bf K}$ is stable in some $\lambda<\ls$. Then the first stability cardinal $\geq\ls$ is bounded above by $2^{\ls}$. 
\end{corollary}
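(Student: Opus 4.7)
The plan is to apply \ref{gvstab} with $\mu=\lambda$ to the target cardinal $\chi\defeq 2^{\ls}$, which lies in $[\ls,2^{\ls}]$ and so will witness the bound on the first stability cardinal $\geq\ls$.  The entire point of Section \ref{synspl} has been to make such an application legitimate when the stability cardinal $\lambda$ sits strictly below $\ls$: the lemmas \ref{gvstablem1}--\ref{gvstablem4} are phrased for arbitrary sets inside the monster, so there is no formal obstruction to taking the splitting base to be of size $\lambda<\ls$.

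First I would verify the cardinal-arithmetic hypothesis $\chi=\chi^{\lambda}$ that appears in \ref{gvstab}.  Because $\lambda<\ls$ forces $\ls\cdot\lambda=\ls$, one has
\[
\chi^{\lambda}=(2^{\ls})^{\lambda}=2^{\ls\cdot\lambda}=2^{\ls}=\chi.
\]
Applying \ref{gvstab} with $\mu=\lambda$ then delivers stability in $\chi=2^{\ls}$ directly.  Unpacking its proof: take any $C$ with $|C|=\chi$; enlarge it to a $\lambda^{+}$-saturated superset of the same size via \ref{gvstabprop}(2); use \ref{gvstablem4} to assign each $p\in\gs(C)$ a base $A_{p}\subseteq C$ of size $\lambda$ over which $p$ does not $\lambda$-split; bound the number of such bases by $(2^{\ls})^{\lambda}=2^{\ls}$; and apply \ref{gvstablem3} to cap the number of non-splitting types over any fixed base by $\lambda$.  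The total is $2^{\ls}$.

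The main obstacle I would have to address is the $\lambda$-tameness assumption buried in \ref{gvstablem1} and \ref{gvstablem3}: at the scale $\lambda<\ls$ classical Galois tameness is not automatic.  The resolution is the remark immediately after \ref{gsynspl}: since the base $A$ has size $\leq\lambda$, the $(<\lambda^{+})$-Galois Morleyization already codes every Galois type over $A$ without any further hypothesis, so the tameness-dependent step used in the proof of \ref{gvstablem3} may be replaced by the tameness-free \ref{gsynspl}, and similarly for \ref{gvstablem1}.  With these substitutions the counting argument of the preceding paragraph goes through to give $\card{\gs(C)}\leq 2^{\ls}$, which is exactly the stability in $2^{\ls}$ that the corollary demands.
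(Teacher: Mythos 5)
Your core argument is exactly the paper's: the proof there is the one-liner ``apply \ref{gvstab} to $(2^{\ls})^\lambda=2^{\ls}$'', and your verification that $(2^{\ls})^{\lambda}=2^{\ls\cdot\lambda}=2^{\ls}$, together with the unpacking of \ref{gvstab}'s proof (saturate $C$, assign each $p\in\gs(C)$ a non-splitting base via \ref{gvstablem4}, count bases by $(2^{\ls})^{\lambda}$ and types per base by \ref{gvstablem3}), matches it.

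However, your final paragraph contains a genuine error. The remark after \ref{gsynspl} only says that the $(<\lambda^{+})$-Galois Morleyization codes all Galois types over domains of size $\leq\lambda$ without any tameness hypothesis; that is why \emph{splitting} (whose witnesses $B_1,B_2$ have size $\lambda$) is detected syntactically for free. It does not discharge the uses of $\lambda$-tameness in \ref{gvstablem1} and \ref{gvstablem3}: there the types $p,q$ live over the large set $C$ of size $2^{\ls}$, and one needs that $p\neq q$ implies $p_{\lambda^{+}}\neq q_{\lambda^{+}}$, equivalently that distinct Galois types over $C$ already differ over some subset of size $\leq\lambda$ --- which is precisely $\lambda$-tameness and is not automatic, least of all for $\lambda<\ls$. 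Indeed \ref{gvstablem1} itself is stated as ``we can allow $|B|,|C|\geq\mu$ \emph{if we assume $\mu$-tameness}.'' Without it one could have $p\neq q$ with $p\restriction D=q\restriction D$ for every $D\subseteq C$ of size $\leq\lambda$, and then $p_{\lambda^{+}}=q_{\lambda^{+}}$, so the syntactic counting never separates them. The correct reading is that the corollary inherits the $\lambda$-tameness (and $AP$) hypotheses of \ref{gvstab} --- the paper leaves them implicit in the statement; you should carry $\lambda$-tameness as a hypothesis rather than claim it away.
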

\begin{proof}
Apply \ref{gvstab} to $(2^{\ls})^\lambda=2^{\ls}$.
\end{proof}

Our final application is the upward transfer of stability. The original proof of (1) below  uses weak tameness (tameness over saturated models). \cite[Lemma 5.5]{s3} proves a stronger version of (2) with chain local character instead of set local character, but we do not assume the former here. 
\begin{proposition}
Let $\mu<\lambda$ be infinite cardinals. Assume ${\bf K}$ is $\mu$-tame and stable in $\mu$.
\begin{enumerate}
\item \emph{\cite[Theorem 4.5]{bkv}} ${\bf K}$ is also stable in $\mu^+$. 
\item  If in addition $\cf(\lambda)>\mu$ and ${\bf K}$ is stable in unbounded many cardinals below $\lambda$, then it is stable in $\lambda$.
\end{enumerate} 
\end{proposition}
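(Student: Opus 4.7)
The plan is to handle both parts uniformly by filtering the domain through an increasing continuous chain and counting types via local character of $\mu$-splitting. Fix $C$ with $|C|\leq\lambda$ (take $\lambda=\mu^+$ for (1)). I will extend $C$ to $C^*=\bigcup_{i<\cf(\lambda)}M_i$ of size $\lambda$, built by recursion so that: at successors $M_i\subset_u M_{i+1}$ (using \ref{gvstabprop}(1)); for (1) each $|M_i|=\mu$, and for (2) the sizes $|M_i|=\mu_i$ form a cofinal sequence of stability cardinals in $\lambda$ with $\mu_0\geq\mu$. Interleave absorbing the elements of $C$ so that $C\subseteq C^*$; since the restriction map $\gs(C^*)\to\gs(C)$ is surjective, it suffices to bound $|\gs(C^*)|$.

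Given $p\in\gs(C^*)$, apply \ref{gvstablem4} to obtain $A_p\subseteq C^*$ of size $\mu$ over which $p$ does not $\mu$-split. Because $\cf(\lambda)>\mu$ (automatic when $\lambda=\mu^+$, hypothesis in (2)), there is $i_p<\cf(\lambda)$ with $A_p\subseteq M_{i_p}$. Non-$\mu$-splitting is monotone in the base: any splitting witness $f:B_1\cong_{M_{i_p}}B_2$ fixes $A_p$ pointwise and hence would witness $\mu$-splitting over $A_p$; so $p$ does not $\mu$-split over $M_{i_p}$. Now apply \ref{gvstablem1} with $A=M_{i_p}$, $B=M_{i_p+1}$, $C=C^*$ (legitimate since $\mu$-tameness allows $|B|,|C|\geq\mu$ and $M_{i_p}\subset_u M_{i_p+1}$ by construction): $p$ is determined uniquely by $p\restriction M_{i_p+1}$ among non-$\mu$-splitting extensions. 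Hence $p\mapsto(i_p,p\restriction M_{i_p+1})$ is injective.

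For the count: in (1), $|\gs(M_{i+1})|\leq\mu$ by stability in $\mu$, so $|\gs(C^*)|\leq\mu^+\cdot\mu=\mu^+$. In (2), $|\gs(M_{i+1})|\leq\mu_{i+1}<\lambda$ because $\mu_{i+1}$ is a stability cardinal, and $\sum_{i<\cf(\lambda)}\mu_{i+1}\leq\cf(\lambda)\cdot\lambda=\lambda$. Combined with surjectivity of restriction, this gives $|\gs(C)|\leq\lambda$, as required.

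The main obstacle will be the chain construction in (2): building $\langle M_i:i<\cf(\lambda)\rangle$ continuous and increasing with varying sizes $\mu_i$, with $M_i\subset_u M_{i+1}$, with each successor size a stability cardinal, and with $C$ absorbed. I will proceed by a zig-zag at each successor stage: first add the next $\mu_i$-many elements of $C$, then close under a universal extension of size $\mu_{i+1}$ guaranteed by \ref{gvstabprop}(1) applied to stability in $\mu_i$ (or $\mu_{i+1}$, modulo reindexing). Since $\cf(\lambda)>\mu$, at every limit $i<\cf(\lambda)$ the cofinality of $i$ is $\leq\cf(\lambda)$ and $\sup_{j<i}\mu_j\leq\mu_i$, so unions preserve the intended size. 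The remaining subtlety — that non-$\mu$-splitting passes from $A_p$ to the larger base $M_{i_p}$ — is immediate from unfolding the definition as above, but worth recording.
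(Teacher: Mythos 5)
Your argument is correct and is essentially the paper's own proof: both rest on the local character lemma \ref{gvstablem4}, a universal increasing chain built from \ref{gvstabprop}(1), the cofinality hypothesis to locate the non-splitting base inside some link of the chain, and the uniqueness lemma \ref{gvstablem1} to bound the number of types. The only difference is presentational — you count directly via the injection $p\mapsto(i_p,p\restriction M_{i_p+1})$ where the paper runs the same steps as a pigeonhole contradiction — and both versions leave the same small point implicit in part (2), namely that once $|M_{i_p}|>\mu$ one should pass to $\mu_{i_p}$-splitting (via $\mu$-tameness and \ref{gsynsplcor}) before invoking monotonicity and \ref{gvstablem1} over the larger base.
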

\begin{proof}
\begin{enumerate}
\item Suppose $|A|=\mu^+$ has $\mu^{++}$ many types $\langle p_\alpha:\alpha<\mu^{++}\rangle$. Write $A=\bigcup_{i<\mu^+}A_i$ increasing and continuous, with $|A_i|=\mu$. We may assume $A_{i+1}\vDash\gs(A_i)$ by the following: define another chain $\langle A_i':i\leq\mu^+\rangle$ increasing and continuous such that $\card{A_{i+1}'}=\card{A_{i+1}}=\mu$ and $A_{i+1}'\supset_u A_i\cup A_i'$ (using \ref{gvstabprop}(1)). Replace $A$ by $A_{\mu^+}'$. 

By \ref{gvstablem4} and $\cf{\mu^{++}}>\mu^+$, we may assume all $p_\alpha$ does not $\mu$-split over $A_0$. By stability in $\mu$ and pigeonhole principle, we may assume all $p_\alpha$ has the same type over $A_1$. Together with \ref{gvstablem1}, all $p_\alpha$ are equal, contradiction.

\item We consider limit cardinal $\lambda$. Pick a cofinal sequence $\langle \lambda_i:i<\cf(\lambda)\rangle$ to $\lambda$ such that ${\bf K}$ is stable in all $\lambda$. Repeat the same argument as (1) but with $|A|=\lambda$ and $A=\bigcup_{i<\cf(\lambda)}A_i$. 
\end{enumerate}
\end{proof}

\bibliographystyle{alpha}
\bibliography{references}

\begin{thebibliography}{BGKV16}

\bibitem[Bal09]{bal}
John~T. Baldwin.
\newblock {\em Categoricity}.
\newblock University lecture series; v. 50. American Mathematical Society,
  Providence, R.I, 2009.

\bibitem[BGKV16]{BGKV}
Will Boney, Rami Grossberg, Alexei Kolesnikov, and Sebastien Vasey.
\newblock Canonical forking in {AEC}s.
\newblock {\em Annals of pure and applied logic}, 167(7):590--613, 2016.

\bibitem[BKV06]{bkv}
John Baldwin, David Kueker, and Monica VanDieren.
\newblock Upward stability transfer for tame abstract elementary classes.
\newblock {\em Notre Dame journal of formal logic}, 47(2):291--298, 2006.

\bibitem[Bon17]{bon3.1}
Will Boney.
\newblock Computing the number of types of infinite length.
\newblock {\em Notre Dame journal of formal logic}, 58(1), 2017.

\bibitem[GL02]{GLspec}
Rami Grossberg and Olivier Lessmann.
\newblock Shelah's stability spectrum and homogeneity spectrum in finite
  diagrams.
\newblock {\em Archive for mathematical logic}, 41(1):1--31, 2002.

\bibitem[Gro21]{Gbook}
Rami Grossberg.
\newblock {\em A Course in Model Theory {I}: Introduction}.
\newblock 2021.
\newblock A book in preparation.

\bibitem[GS86]{sh222}
Rami. Grossberg and Saharon Shelah.
\newblock {On the number of nonisomorphic models of an infinitary theory which
  has the infinitary order property. I}.
\newblock {\em J. Symbolic Logic}, 51(2):302--322, 1986.

\bibitem[GS98]{sh259}
Rami Grossberg and Saharon Shelah.
\newblock {On Hanf numbers of the infinitary order property}.
\newblock 1998.
\newblock Preprint.

\bibitem[GV06]{GV06b}
Rami Grossberg and Monica VanDieren.
\newblock Galois-stability for tame abstract elementary classes.
\newblock {\em Journal of mathematical logic}, 6(1):25--48, 2006.

\bibitem[Jec03]{jech}
Thomas Jech.
\newblock {\em Set theory}.
\newblock Springer Monographs in Mathematics. Springer, Berlin, Germany, 3rd
  edition, 2003.

\bibitem[Mar02]{marker}
David Marker.
\newblock {\em Model Theory : An Introduction}.
\newblock Graduate Texts in Mathematics, 217. Springer New York, New York, NY,
  1st edition, 2002.

\bibitem[Mor65]{mor}
Michael Morley.
\newblock Omitting classes of elements.
\newblock In Addison, Henkin, and Tarski, editors, {\em The Theory of Models},
  pages 265--273. North-Holland, Amsterdam, 1965.

\bibitem[She71]{sh3}
Saharon Shelah.
\newblock {Finite diagrams stable in power}.
\newblock {\em Ann. Math. Logic}, 2(1):69--118, 1970/1971.

\bibitem[She72]{sh16}
Saharon Shelah.
\newblock {A combinatorial problem; stability and order for models and theories
  in infinitary languages}.
\newblock {\em Pacific J. Math.}, 41:247--261, 1972.

\bibitem[She90]{sh90}
Saharon Shelah.
\newblock {\em Classification theory and the number of non-isomorphic models}.
\newblock Studies in logic and the foundations of mathematics; v. 92.
  North-Holland, Amsterdam, rev. edition, 1990.

\bibitem[She99]{sh394}
Saharon Shelah.
\newblock {Categoricity for abstract classes with amalgamation}.
\newblock {\em Ann. Pure Appl. Logic}, 98(1-3):261--294, 1999.

\bibitem[She09a]{shh}
Saharon Shelah.
\newblock {\em {Classification theory for abstract elementary classes}},
  volume~18 of {\em Studies in Logic (London)}.
\newblock College Publications, London, 2009.

\bibitem[She09b]{sh300a}
Saharon Shelah.
\newblock {\em {Classification theory for abstract elementary classes. Vol.
  2}}, volume~20 of {\em Studies in Logic (London)}.
\newblock College Publications, London, 2009.

\bibitem[Van06]{van06}
Monica VanDieren.
\newblock Categoricity in abstract elementary classes with no maximal models.
\newblock {\em Annals of pure and applied logic}, 141(1):108--147, 2006.

\bibitem[Vas16a]{s3}
Sebastien Vasey.
\newblock Forking and superstability in tame {AEC}s.
\newblock {\em The Journal of symbolic logic}, 81(1):357--383, 2016.

\bibitem[Vas16b]{s5}
Sebastien Vasey.
\newblock Infinitary stability theory.
\newblock {\em Archive for mathematical logic}, 55(3):567--592, 2016.

\end{thebibliography}

{\small\setlength{\parindent}{0pt}
\textit{Email}: wangchil@andrew.cmu.edu

\textit{URL}: http://www.math.cmu.edu/$\sim$wangchil/

\textit{Address}: {Department of Mathematical Sciences, Carnegie Mellon University, Pittsburgh PA 15213, USA}
\end{document}